
\documentclass[10pt]{amsart}
\usepackage{amsmath,amsfonts,amsthm,amsopn,color,amssymb,enumitem}
\usepackage{bbm}
\usepackage{graphicx,tikz,mathtools}
\usepackage[left=2.65cm,right=2.65cm,top=2.7cm,bottom=2.7cm]{geometry}
\usepackage[pdftex]{hyperref}
\usepackage{cite}
\usepackage{mathrsfs} 
\usepackage{caption}
\hypersetup{
	colorlinks=true,
	linkcolor=blue, 
	citecolor=blue,
	filecolor=blue,
	urlcolor=blue,
}

\usepackage{esint}
\usepackage[title]{appendix}

\newtheorem{theorem}{Theorem}[section]
\newtheorem{lemma}[theorem]{Lemma}

\newtheorem{proposition}[theorem]{Proposition}
\newtheorem{remark}[theorem]{Remark}

\newtheorem{corollary}[theorem]{Corollary}

\def\beq{\begin{equation}}
\def\eeq{\end{equation}}
\def\ba{\begin{array}}
\def\ea{\end{array}}

\def\R{\mathbb R}

\def\N{\mathbb N}


\newcommand{\rmnote}[1]{}


\numberwithin{equation}{section}

\newenvironment{key words}{\textbf{Keywords}\mbox{  }}{ }

\pagestyle{plain}

\allowdisplaybreaks

 \begin{document}



\title[Short title]{Multi-peak solutions for the fractional Schrödinger equation with Dirichlet datum}

\author[M. Medina]{Maria Medina}
\address{ \vspace{-0.4cm}
\newline 
\textbf{{\small Maria Medina}} 
\vspace{0.15cm}
\newline \indent  
Departamento de Matem\'aticas, Universidad Aut\'onoma de Madrid, 28049 Madrid, Spain}
 \email{maria.medina@uam.es}
 
\author[J. Wu]{Jing Wu}
\address{ \vspace{-0.4cm}
\newline 
\textbf{{\small Jing Wu}} 
\vspace{0.15cm}
\newline \indent  
Departamento de Matem\'aticas, Universidad Aut\'onoma de Madrid, 28049 Madrid, Spain}
 \email{jing.wu@uam.es}

\keywords{Fractional Laplacian, Schr\"odinger equation, concentration phenomena, Lyapunov-Schmidt reduction}

\subjclass[2020]{35R11}

%
%
\begin{abstract}  \vspace{-0.2cm}
Let $s\in (0,1)$, $\varepsilon>0$ and let $\Omega$ be a bounded smooth domain. Given the problem
$$\varepsilon^{2s}(-\Delta)^{s} u + V(x)u = |u|^{p-1}u \quad \mbox{in }\; \Omega,$$
with Dirichlet boundary conditions and $1<p<(n+2s)/(n-2s)$, we analyze the existence of positive multi-peak solutions concentrating, as $\varepsilon\to 0$, to one or several points of $\Omega$. Under suitable conditions on $V$, we construct positive solutions concentrating at any prescribed set of its non degenerate critical points. Furthermore, we prove existence and non existence of clustering phenomena around local maxima and minima of $V$, respectively. The proofs rely on a Lyapunov-Schmidt reduction where three effects need to be controlled: the potential, the boundary and the interaction among peaks. The slow decay of the associated {\it ground-state} demands very precise asymptotic expansions.
\end{abstract}

\maketitle


\section{Introduction}
\label{Section 1}
Let $\Omega$ be a bounded smooth domain of $\R^n$, $n\geq 2$, and $s\in (0,1)$. Given a small parameter $\varepsilon>0$, we consider the semilinear fractional problem
\begin{equation}\label{dirichlet}
\left\{\begin{array} {ll}
\varepsilon^{2s}(-\Delta)^{s} u + V(x)u = |u|^{p-1}u & \mbox{in }\; \Omega,\\
 u= 0 & \mbox{in }\; \mathbb{R}^{n}\setminus\Omega,
\end{array}\right.
\end{equation}
where $p \in\big(1, \frac{n+2s}{n-2s}\big)$, and $V\in C^{2}(\overline{\Omega})$ is a positive potential satisfying 
\begin{equation}\label{eq:infBoundV}
\inf\limits_{x\in\Omega}V(x)>0.
\end{equation} 
Here $(-\Delta)^{s}$ stands for the fractional Laplacian, defined as
\[(-\Delta)^{s} u(x):=c_{n,s}\lim_{\delta\to 0}\int_{\R^n\setminus B_\delta(x)}\frac{u(x)-u(y)}{|x-y|^{n+2s}}\,dy,\qquad x\in \R^n,\]
where $c_{n,s}$ is a suitable normalization constant (see for instance \cite{BV2016, L1972, S2007} for interesting motivations and properties of this operator). The goal of this paper is to study a series of concentration phenomena for solutions of \eqref{dirichlet} in the form of \textit{multi-peak} or \textit{multi-bump} solutions.

Concentration phenomena for the classical semilinear Schr\"odinger equation have been largely studied due to the great variety of possible behaviors. When the problem is posed in the whole space,
$$-\varepsilon^2\Delta u +V(x)u=|u|^{p-1}u\qquad \mbox{ in }\R^n,\qquad p\in \big(1, \tfrac{n+2}{n-2}\big),$$
spiked positive solutions where constructed first by Floer and Weinstein and by Oh in \cite{FW1986, O1989, O1990}. In these works the peaks of the solutions are well separated one from each other and their locations converge to non degenerate critical points of $V$ as $\varepsilon\to 0$. A clustering phenomenon (i.e., solutions with several interacting peaks concentrating at the same point) around local maxima of $V$ was proven by Kang and Wei in \cite{KW2000}, where the authors also showed that this behavior cannot happen around local minima. Clustering at this type of critical points have been shown in the case of sign-changing solutions, where mixed positive and negative spikes are considered (see for instance \cite{AS2004, BCW2007, AP2007, AP2009, dPFT2002}). In \cite{AR2011} an analogous behavior is found around saddle points of the potential. All these results are proven by perturbation arguments, using as concentration profile the {\it ground-state} solution of 
$$-\Delta w+w=w^p,\quad H^1(\R^n),$$
which is positive, radial, and decays exponentially. The situation importantly changes if the problem is posed in a bounded domain $\Omega$. When we neglect the effect of the potential (i.e., we fix $V\equiv 1$), the boundary conditions play the main role in the concentration phenomenon. In the case of Dirichlet conditions, Ni and Wei proved in \cite{NW1995} that concentration may only happen at a single interior point of the domain, which corresponds to a maximizer of the distance to the boundary. When Neumann boundary conditions are imposed, the solution also has a unique local maximum, but located at the boundary, at a precise point that maximizes the mean curvature of $\partial \Omega$ (see \cite{NT1991,NT1993}). In the presence of a suitable positive potential, by means of variational techniques del Pino and Felmer proved existence of solutions concentrating at any prescribed set of local minima, possible degenerate, of the potential (see \cite{dPF1998}).

In the fractional framework, an essential difference arises. If one wants to apply perturbative methods (as it is the case of this manuscript), it is natural to consider as concentration profile the unique radial positive least energy solution (see \cite{FQT2012,FLS2016}) of
\begin{equation}\label{gsF}
(-\Delta)^{s} w + w = w^{p},\quad w\in H^{s}(\mathbb{R}^{n}).
\end{equation}
This function is smooth and it has the asymptotic behavior 
\begin{equation}\label{behavw}
\frac{\alpha}{1+|x|^{n+2s}}\leq w(x)\leq\frac{\beta}{1+|x|^{n+2s}},
\end{equation}
for certain positive constants $\alpha,\beta$. The exponential decay of the local case is here replaced by a polynomial-type rate, which makes substantially stronger the superposition of the tails of the different peaks (i.e., the interaction among them) and the error of the boundary data. In the case of $\R^n$, where only the first difficulty appears, after much more involved computations than in the local case, D\'avila, del Pino and Wei proved in \cite{DdPW2014} the non local counterpart of the results by Floer, Weinstein and Oh, that is, concentration phenomena at non degenerate critical points of the potential. In \cite{ARS2021}, Alarc\'on, Ritorto and Silva studied the corresponding clustering phenomena.

In the case of bounded domains major difficulties emerge. In the beautiful article \cite{DdPDV2015}, D\'avila, del Pino, Dipierro and Valdinoci considered problem \eqref{dirichlet} when $V\equiv 1$, and they aimed to extend \cite{NW1995} to the fractional framework. After very delicate energy expansions, they were able to prove that concentration may occur at a single point, located in the interior of the domain. However, the great effect provoked by the slow decay of the profile prevented them from identifying the precise location of the concentration point. Up to our knowledge, this remains as an open problem. It is then natural to consider the case $V\not\equiv 1$ and to analyze the existence or non existence of multi-peak solutions to \eqref{dirichlet}, and the precise location of the corresponding concentration points. This is  the goal of this paper. In order to be more precise, let us point out that, if $w$ solves \eqref{gsF}, for every parameter $\lambda>0$ the rescaled function
\begin{equation}\label{eq:w_lambda}
w_\lambda(x):=\lambda^{\frac{1}{p-1}}w(\lambda^{\frac{1}{2s}}x)
\end{equation}
satisfies
\begin{equation}\label{eqwlambda}
(-\Delta)^{s} w_\lambda + \lambda w_\lambda = w_\lambda^{p} \quad \mbox{in} \, \,\mathbb{R}^{n}.
\end{equation}
Hence, for any point $\xi\in\Omega$, the spike-shaped function
$w_{V(\xi)}\big(\frac{x-\xi}{\varepsilon}\big)$
solves the problem
\begin{equation}\label{gsP}
\varepsilon^{2s}(-\Delta)^{s} v + V(\xi) v = v^{p} \quad \mbox{in} \; \mathbb{R}^{n}.
\end{equation}
Notice that, as $\varepsilon\to 0$, this function exhibits a concentration phenomenon at the point $\xi$. Actually, we will construct multi-peak solutions to \eqref{dirichlet} whose concentration profiles at every peak are asymptotically this. Three main effects/errors coexist in this setting (with the polynomial decay of $w$ playing a crucial role):
\begin{itemize}
\item The potential: problem \eqref{gsP} assumes the potential to be constant, which differs from the situation in \eqref{dirichlet}. 

\item The interaction among peaks: given the non linear nature of the problem, the superposition of profiles concentrating at different points will produce an inevitable error.

\item The boundary correction: the fact that the profile is a strictly positive function in the whole space makes necessary to introduce a boundary correction in order to satisfy the boundary condition of \eqref{dirichlet}. This requires an involved analysis of the asymptotics of the associated Green and Robin functions.
\end{itemize}
The first two items are also present in \cite{ARS2021, DdPW2014} and determine the fact that concentration happens at critical points of $V$. The third appears in \cite{DdPDV2015} and it is the reason why in their case the peak is located at an (unspecified) interior point. In this manuscript we will deal with the three effects at the same time. By using the variational structure of the problem, we will obtain sharp asymptotic expansions of the energy that will allow us to identify the item which produces the biggest effect and therefore determines the location of the concentration phenomena. Actually, in the first result we will see that, given a prescribed set of different non degenerate critical points of $V$, the interaction among peaks centered at them (via the potential) hides the effect of the boundary, which appears at a lower order in $\varepsilon$.

\begin{theorem}\label{result1} Let $k\in \N$, $k\geq 1$, and let $\;\hat{\xi}_1,\ldots,\hat{\xi}_k\in \Omega$ be $k$ non degenerate critical points of $V$, i.e., such that 
$$\nabla V(\hat{\xi}_i)=0\quad \mbox{ and }\quad  det\,(D^2V(\hat{\xi}_i))\neq 0, \quad \mbox{ for every }i\in\{1,\ldots,k\}.$$
Then, there exists $\varepsilon_\star>0$ such that, for every $0<\varepsilon<\varepsilon_\star$, 
\begin{equation}\label{eq:u_eps}
u_\varepsilon(x)=\sum\limits_{i=1}^k w_{V(\xi_i^\varepsilon)}\bigg(\frac{x-\xi_i^\varepsilon}{\varepsilon}\bigg)+o_\varepsilon(1),
\end{equation}
is a $k$-spike solution of \eqref{dirichlet}, with $\xi^\varepsilon_i\to \hat{\xi}_i$ as $\varepsilon\to 0$.
\end{theorem}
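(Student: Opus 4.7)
The plan is to apply a Lyapunov-Schmidt reduction with respect to the $kn$-dimensional parameter $\xi=(\xi_1,\ldots,\xi_k)$, with each $\xi_i$ ranging in a small neighborhood of $\hat{\xi}_i$. For each such $\xi$ I would first build an approximate solution
$$W_\xi(x):=\sum_{i=1}^k Pw_{i,\xi}(x),\qquad w_{i,\xi}(x):=w_{V(\xi_i)}\!\left(\frac{x-\xi_i}{\varepsilon}\right),$$
where $Pw_{i,\xi}$ is the Dirichlet projection of $w_{i,\xi}$ onto $\Omega$, i.e.\ the unique function vanishing in $\mathbb{R}^n\setminus\Omega$ and satisfying $\varepsilon^{2s}(-\Delta)^s Pw_{i,\xi}+V(\xi_i)Pw_{i,\xi}=w_{i,\xi}^p$ in $\Omega$. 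The correction $Pw_{i,\xi}-w_{i,\xi}$ can be analyzed through the Green and Robin functions of $\varepsilon^{2s}(-\Delta)^s+V(\xi_i)$ on $\Omega$; by \eqref{behavw} it should be of size $\varepsilon^{n+2s}$ times a smooth function of $\xi_i$. The residual
$$S(W_\xi):=\varepsilon^{2s}(-\Delta)^s W_\xi+V\,W_\xi-W_\xi^p$$
then decomposes into three contributions: the potential mismatch $\sum_i(V-V(\xi_i))w_{i,\xi}$, the nonlinear interaction $W_\xi^p-\sum_i w_{i,\xi}^p$, and the boundary error, each of which must be estimated in a weighted norm tailored to the polynomial tails of $w$.

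Setting $u=W_\xi+\phi$ and linearizing, the equation for the correction reads
$$L_\varepsilon\phi:=\varepsilon^{2s}(-\Delta)^s\phi+V\phi-pW_\xi^{p-1}\phi=-S(W_\xi)+N(\phi),$$
with $N(\phi)$ collecting the quadratic and higher-order terms. By the non-degeneracy result of \cite{FLS2016}, the limiting kernel of $L_\varepsilon$ is the $kn$-dimensional space $K_\xi$ spanned by (projections of) the translation modes $\partial_{\xi_{i,j}}Pw_{i,\xi}$. As long as the $\xi_i$'s remain separated and at positive distance from $\partial\Omega$, $L_\varepsilon$ should be uniformly invertible on the $L^2$-orthogonal complement of $K_\xi$, and a standard contraction argument then produces, for every such $\xi$, a unique small solution $\phi_\xi$, smooth in $\xi$, of the equation projected onto $K_\xi^\perp$, together with Lagrange multipliers $c_{i,j}(\xi)$ for the components in $K_\xi$.

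The third step is the genuinely finite-dimensional reduction: $W_\xi+\phi_\xi$ is a solution of \eqref{dirichlet} if and only if $\xi$ is a critical point of the reduced energy $J_\varepsilon(\xi):=I_\varepsilon(W_\xi+\phi_\xi)$, where $I_\varepsilon$ is the natural energy functional associated with \eqref{dirichlet}. A careful asymptotic expansion should yield
$$J_\varepsilon(\xi)=\varepsilon^n\Big(kA_0+B_0\sum_{i=1}^k V(\xi_i)^{\gamma}\Big)+o(\varepsilon^n),$$
for explicit positive constants $A_0,B_0,\gamma$ depending only on $n,s,p$ and $w$, the $o(\varepsilon^n)$ remainder encoding the boundary and inter-peak contributions (expected to appear at order $\varepsilon^{n+2s}$). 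Since each $\hat{\xi}_i$ is a non-degenerate critical point of $V$, the leading profile $\xi\mapsto\sum_i V(\xi_i)^\gamma$ has a non-degenerate critical point at $(\hat{\xi}_1,\ldots,\hat{\xi}_k)$, and a standard degree or implicit function argument applied to $\nabla_\xi J_\varepsilon$ produces, for $\varepsilon$ small, a critical point $\xi^\varepsilon$ of $J_\varepsilon$ with $\xi_i^\varepsilon\to\hat{\xi}_i$, yielding the solution \eqref{eq:u_eps}.

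The main obstacle, anticipated in the introduction, is the slow polynomial decay of $w$: the boundary correction and the inter-peak interaction are not exponentially but only polynomially small in $\varepsilon$, so both the estimate on $S(W_\xi)$ and the $C^1$-expansion of $J_\varepsilon$ with respect to $\xi$ must be pushed precisely to the order at which the potential-driven term $\sum_i V(\xi_i)^\gamma$ still dictates the critical-point structure. This requires combining sharp Green/Robin-function asymptotics in the spirit of \cite{DdPDV2015} with refined interaction estimates in the spirit of \cite{ARS2021,DdPW2014} into one coherent expansion, valid uniformly in $\xi$ near $(\hat{\xi}_1,\ldots,\hat{\xi}_k)$.
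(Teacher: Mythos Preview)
Your proposal is correct and follows the same Lyapunov--Schmidt scheme as the paper: rescale to $\Omega_\varepsilon$, take as ansatz the sum of Dirichlet-projected profiles $\bar u_i$ (your $Pw_{i,\xi}$), invert the linearized operator on the complement of the translation modes $Z_{ij}$ using the non-degeneracy from \cite{FLS2016}, and solve the projected nonlinear problem by contraction. The Green/Robin-function estimates you anticipate are exactly what the paper uses (its Section~2, following \cite{DdPDV2015}).

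The one substantive difference is in the final finite-dimensional step. You propose to expand the reduced energy $J_\varepsilon(\xi)$ and locate a critical point of it via a degree/implicit-function argument. The paper does develop this variational machinery (it proves $c_{ij}=0 \Leftrightarrow \nabla_q I_\varepsilon(q)=0$ and expands $I_\varepsilon$), but it reserves that route for Theorem~\ref{result3}; for Theorem~\ref{result1} it instead expands the Lagrange multipliers \emph{directly}, obtaining
\[
c_{ij}=\varepsilon\Big[\gamma_i\,\partial_{x_j}V(\xi_i)+o_\varepsilon(1)\Big],
\]
and then applies the implicit function theorem to the map $\xi\mapsto\big(\gamma_i\nabla V(\xi_i)\big)_i$. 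This buys exactly the $C^1_\xi$ information you flag as the delicate point at the end of your proposal, without having to upgrade the $C^0$ energy expansion to a $C^1$ one (the $C^0$ remainder in the paper's energy expansion contains a term $\varepsilon\sum_i|\nabla V(\xi_i)|$ of the same order as the gradient of the leading part, so the $C^0$ expansion alone would not suffice). Your route works too, but it forces you to redo the $c_{ij}$ computation anyway, since $\nabla_q I_\varepsilon$ is essentially $-\sum c_{ij}\,M^{\ell m}_{ij}$; the paper's shortcut is simply to skip the intermediate passage through the energy.
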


Here $o_\varepsilon(1)$ stands for a quantity that vanishes uniformly as $\varepsilon\to 0$. More precise information will be actually given: this term corresponds to a small function whose $\|\cdot\|_*$-norm  (see \eqref{rho} in Section \ref{Section 5}) decays with $\varepsilon$. Notice that, using a different technique, this theorem extends to the non local case the result in \cite{dPF1998}, but allowing all type of critical points of $V$, not only minima (although requiring them to be non degenerate). 

The analysis performed to prove Theorem \ref{result1} involves the identification of the exact order at which the interactions among bumps occur, compared to the other effects. It can be deduced from it that there is some room to consider configurations where the points are different but collapsing when $\varepsilon\to 0$, as long as this convergence is not faster than a certain velocity. This allows us to conclude the existence of clustering phenomena around local maxima of the potential, and the non existence around local minima. 
The case of sign-changing multi-peak solutions is left as an open problem.

\begin{theorem}\label{result3} Let $k\in \N$, $k\geq 1$, and let $K$ be a bounded open set of $\Omega$ with smooth boundary such that
\[\sup\limits_{K}V>\sup\limits_{\partial K}V.\]
Then, for every $\alpha\in (0,1)$ there exists a $k$-spike solution $u_\varepsilon$ of \eqref{dirichlet} of the form \eqref{eq:u_eps} with $\xi_i^\varepsilon\in K$ and 
\begin{equation}\label{distance}
|\xi_i^\varepsilon-\xi_\ell^\varepsilon|>\varepsilon^{1-\frac{\alpha}{n+2s}},\;\; i\neq \ell,\qquad i,\ell\in\{1,\ldots,k\}.
\end{equation}
Furthermore, if $\hat{\xi}$ is a strict local maximum point of $V$ in $\Omega$,  there exists a $k$-spike solution of the form \eqref{eq:u_eps} satisfying \eqref{distance} and such that
$$\xi^\varepsilon_i\to \hat{\xi}\;\;\mbox{ as }\;\;\varepsilon\to 0.$$
\end{theorem}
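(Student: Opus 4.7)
The natural approach is to adapt the Lyapunov--Schmidt reduction used for Theorem \ref{result1} to an admissible configuration space that allows clustering at the prescribed rate. Introduce
\[
\mathcal{O}_\varepsilon := \left\{\xi=(\xi_1,\ldots,\xi_k)\in K^k \colon |\xi_i-\xi_\ell|>\varepsilon^{1-\frac{\alpha}{n+2s}},\ i\ne \ell\right\}.
\]
The exponent $1-\alpha/(n+2s)$ is tailored to the polynomial decay \eqref{behavw}: for $\xi\in\mathcal{O}_\varepsilon$ one has $|\xi_i-\xi_\ell|/\varepsilon>\varepsilon^{-\alpha/(n+2s)}$, so the interaction between two peaks is controlled by $\varepsilon^{\alpha}$ in the relevant norm. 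This smallness is enough to run uniformly the reduction scheme of Theorem \ref{result1}: the linearized operator remains invertible, a contraction mapping produces a correction $\phi_\varepsilon(\xi)$, and the problem reduces to finding critical points of a smooth function $F_\varepsilon\colon\overline{\mathcal{O}_\varepsilon}\to\R$ whose critical points correspond to genuine $k$-spike solutions of \eqref{dirichlet}.

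Next I would establish a sharp asymptotic expansion of $F_\varepsilon$ uniformly on $\mathcal{O}_\varepsilon$, extending the one needed for Theorem \ref{result1}. The leading contribution has the form $\varepsilon^n\, a_0\sum_{i=1}^k \Gamma(V(\xi_i))$, with $\Gamma$ a strictly monotone function coming from the energy of the single-peak profile $w_{V(\xi_i)}$ determined by \eqref{eqwlambda}. The next-order correction splits into a pairwise \emph{interaction term} of size at most $\varepsilon^{\alpha}$ on $\mathcal{O}_\varepsilon$, governed by \eqref{behavw}, and a \emph{boundary term} coming from the Green/Robin function of the fractional Dirichlet problem, which is negligible since $K\Subset\Omega$. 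Proving that all remainders are $C^1$ in $\xi$ and uniformly small on $\mathcal{O}_\varepsilon$ is the core technical step, and it is considerably harder than in the exponential-decay/local case because of the slow decay of $w$.

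The existence of a $k$-spike solution then follows by seeking an extremum of $F_\varepsilon$ on the compact set $\overline{\mathcal{O}_\varepsilon}$. The hypothesis $\sup_K V>\sup_{\partial K}V$, combined with the monotonicity of $\Gamma$ and the fact that the correction terms are of order $\varepsilon^{\alpha}$ (hence strictly subordinate to the variation of the potential part), forces the extremum to lie strictly inside $K^k$. The collision boundary $\min_{i\ne\ell}|\xi_i-\xi_\ell|=\varepsilon^{1-\alpha/(n+2s)}$ is ruled out by the sign and blow-up rate of the interaction term as two peaks approach each other, which push configurations away from collisions. Hence $F_\varepsilon$ attains its extremum at an interior critical point, yielding a $k$-spike solution with the required separation \eqref{distance}. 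For the clustering assertion, I would apply the previous construction with $K$ replaced by a small open neighborhood $K_\delta$ of $\hat\xi$ on which $V(\hat\xi)>\sup_{\partial K_\delta}V$; this is possible since $\hat\xi$ is a strict local maximum of $V$. A diagonal argument in $(\varepsilon,\delta)$ then produces solutions whose peaks all converge to $\hat\xi$, while \eqref{distance} is preserved by construction.

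The main obstacle is the uniform control of the reduction and of the sharp energy expansion as peaks are permitted to approach each other at the rate $\varepsilon^{1-\alpha/(n+2s)}$. The polynomial decay of the fractional profile makes the interaction remainders considerably larger than in the classical exponential-decay setting, and capturing their precise leading constant and the correct repulsive sign near the collision boundary --- enough to dominate every other subleading term and to prevent collapse while still permitting clustering around a single maximum --- is the technical heart of the argument.
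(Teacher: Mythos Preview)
Your outline is essentially the paper's proof: define the same configuration set, run the reduction uniformly, maximize the reduced energy $I_\varepsilon$ over its closure, and rule out both boundary pieces using the leading potential term (for $\partial K$) and the negative interaction term (for the collision set). One concrete step you leave implicit is the lower bound needed to exclude the collision boundary: the paper builds an explicit competitor $\xi^0$ with peaks separated at an \emph{intermediate} scale $\varepsilon^{1-\beta/(n+2s)}$, where $\alpha<\beta<\alpha\min\{p,2,2(n+2s-\mu)/(n+2s)\}$, so that its interaction is $O(\varepsilon^\beta)$ while at the collision boundary it is $-c_3\varepsilon^\alpha+O(\varepsilon^\beta)$; the upper constraint on $\beta$ is exactly what forces the remainder $\bar\tau$ in the expansion to be $o(\varepsilon^\alpha)$, so the comparison is meaningful. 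Your phrase ``blow-up rate'' is slightly misleading here, since on $\overline{\mathcal O_\varepsilon}$ nothing blows up; the exclusion is a quantitative comparison, not a divergence. For the strict local maximum the paper simply takes $K=B_{\rho_\varepsilon}(\hat\xi)$ with $\rho_\varepsilon\to 0$ and repeats the argument, which is your diagonal idea made direct.
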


\begin{theorem}\label{result4} Fix any positive integer $k>1$ and assume that $\hat{\xi}$ is a local minimum point of $V$ such that $det(D^2 V(\hat{\xi}))\neq 0.$ Then equation \eqref{dirichlet} cannot have a positive solution $u_\varepsilon$ of the form \eqref{eq:u_eps} with $\xi_i^\varepsilon$ satisfying
\[\xi_i^\varepsilon\rightarrow \hat{\xi}\qquad \mbox{ and }\quad \frac{|\xi^\varepsilon_i-\xi^\varepsilon_\ell|}{\varepsilon}\to+\infty\quad  \qquad\emph{as} \,\, \varepsilon\rightarrow 0,\quad i,\ell \in\{1,\ldots,k\}.\]
\end{theorem}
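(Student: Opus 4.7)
The approach I would take is that, under the hypotheses of Theorem \ref{result4}, any solution of \eqref{dirichlet} of the form \eqref{eq:u_eps} lies in the range of the Lyapunov-Schmidt scheme developed for Theorem \ref{result1}. Consequently, the peak configuration $(\xi_1^\varepsilon, \ldots, \xi_k^\varepsilon)$ must be a critical point of the associated reduced finite-dimensional energy $F_\varepsilon$, and I would derive from this a balance identity that is incompatible with $\hat{\xi}$ being a non-degenerate local minimum of $V$.

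The first step is to refine the expansion of $F_\varepsilon$ already carried out for Theorem \ref{result1} by keeping the interaction term explicit. Schematically,
\[
F_\varepsilon(\xi) = \varepsilon^n \sum_{i=1}^{k} I(V(\xi_i)) \;-\; c_1 \varepsilon^{2n+2s} \sum_{i\neq j} \frac{g(V(\xi_i), V(\xi_j))}{|\xi_i - \xi_j|^{n+2s}} \;+\; R_\varepsilon(\xi),
\]
with $I(\lambda) = C_0 \lambda^{\sigma}$, $\sigma = \tfrac{p+1}{p-1} - \tfrac{n}{2s} > 0$ (strictly positive by subcriticality of $p$), and $C_0, c_1, g > 0$. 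The sign of the interaction term comes from the cross-energy computation, essentially $-\int \bar u_i^p\, \bar u_j$, after using equation \eqref{eqwlambda} and the polynomial tail \eqref{behavw}. The remainder $R_\varepsilon$ absorbs the boundary correction (governed by the Green/Robin function evaluated at the peaks) together with genuine perturbative errors; since the peaks stay in a fixed neighborhood of $\hat{\xi} \in \Omega$, the boundary piece is of strictly smaller order than the interaction sum, which blows up because $|\xi_i^\varepsilon - \xi_j^\varepsilon| \to 0$.

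The second step is a Pohozaev-type pairing. Writing $\nabla_{\xi_i} F_\varepsilon(\xi^\varepsilon) = 0$, taking the inner product with $(\xi_i^\varepsilon - \hat{\xi})$, and summing over $i$, the elementary identity
\[
(\xi_i - \xi_j)\cdot\bigl[(\xi_i - \hat{\xi}) - (\xi_j - \hat{\xi})\bigr] = |\xi_i - \xi_j|^2
\]
collapses the interaction piece and yields, at leading order,
\[
\varepsilon^n \sigma C_0 \sum_{i=1}^k V(\xi_i^\varepsilon)^{\sigma-1}\, \nabla V(\xi_i^\varepsilon) \cdot (\xi_i^\varepsilon - \hat{\xi}) + (n+2s)\, c_1\, \varepsilon^{2n+2s} \sum_{\{i,j\}} \frac{g(V(\xi_i^\varepsilon), V(\xi_j^\varepsilon))}{|\xi_i^\varepsilon - \xi_j^\varepsilon|^{n+2s}} = o(\text{leading terms}).
\]
At a non-degenerate local minimum $\hat{\xi}$, the Taylor expansion $\nabla V(\xi_i^\varepsilon) = D^2 V(\hat{\xi})(\xi_i^\varepsilon - \hat{\xi}) + O(|\xi_i^\varepsilon - \hat{\xi}|^2)$ together with the positive-definiteness of $D^2 V(\hat{\xi})$ makes the $V$-contribution non-negative, while the interaction contribution is strictly positive because $k > 1$ and the peaks are distinct. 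Two non-negative leading terms, one of which is strictly positive, cannot balance a lower-order remainder, yielding the desired contradiction.

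The main obstacle will be to control $R_\varepsilon$ and its derivatives uniformly with respect to the two relevant scales $\delta_\varepsilon := \max_i |\xi_i^\varepsilon - \hat{\xi}|$ and $d_\varepsilon := \min_{i\neq j}|\xi_i^\varepsilon - \xi_j^\varepsilon|$, so that the positive leading contributions genuinely dominate after the pairing. The hypothesis $d_\varepsilon/\varepsilon \to +\infty$ is crucial here: it legitimizes the polynomial tail expansion via \eqref{behavw} and keeps the Green-type boundary correction strictly of lower order than $\varepsilon^{2n+2s}/d_\varepsilon^{n+2s}$, since the peaks collapse to the interior point $\hat{\xi}$. Once these estimates are available, the positivity argument above closes the proof independently of the individual sizes of $\delta_\varepsilon$ and $d_\varepsilon$.
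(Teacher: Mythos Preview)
Your approach is correct and coincides with the paper's. The paper obtains the balance identity (Lemma \ref{nonexistence2}) by testing the equation against $Z_{ij}$ and using $c_{ij}=0$, which is equivalent to your criticality condition $\nabla F_\varepsilon=0$ via Lemma \ref{equivalence}, and then defers to \cite{KW2000} for exactly the Pohozaev-type pairing with $\xi_i^\varepsilon-\hat\xi$ that you spell out.
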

\begin{remark}
It is an interesting open question whether the ideas from Theorem \ref{result1} and Theorem \ref{result3} can be combined to construct multi-peak solutions showing clustering phenomena at certain points, and concentration at distinct points in others, possibly at different rates of convergence. 
\end{remark}
Let us briefly sketch the strategy of the proofs. After absorbing $\varepsilon$ by scaling, equation \eqref{dirichlet} can be rewritten as
\begin{equation}\label{dirichlet1}
\left\{\begin{array} {ll}
(-\Delta)^{s} u + V(\varepsilon x)u = u^{p} & \mbox{in }\; \Omega_\varepsilon,\\
 u= 0 & \mbox{in }\; \mathbb{R}^{n}\setminus\Omega_\varepsilon,
\end{array}\right.\qquad \mbox{ where }\qquad \Omega_\varepsilon:=\frac{\Omega}{\varepsilon}=\left\{\frac{x}{\varepsilon},\;x\in\Omega\right\},
\end{equation}
which is the Euler-Lagrange equation associated to the energy functional
\begin{equation*}
  J_\varepsilon(u):=\frac{1}{2}\int_{\Omega_\varepsilon}((-\Delta)^suu+V(\varepsilon x)u^2)\,dx-\frac{1}{p+1}\int_{\Omega_\varepsilon} u^{p+1}\,dx, \quad u\in H^s_0(\Omega_\varepsilon).
\end{equation*}
Let $k\in\N$, $k\geq 1$, and consider $\xi_1,\ldots,\xi_k\in\Omega$ and their rescaled versions 
\begin{equation}\label{xi_i}
q_i:=\frac{\xi_i}{\varepsilon}\in\Omega_\varepsilon,\quad i\in\{1,\ldots, k\}.
\end{equation}
and denote 
\begin{equation}\label{wi}
w_i(x):=w_{\lambda_i}(x-q_i), \quad \lambda_i=V(\xi_i),\quad x\in\Omega_\varepsilon,
\end{equation}
with $w_{\lambda_i}$ defined in \eqref{eq:w_lambda}. By $\eqref{behavw}$,
\begin{equation*}
  \lambda_i^{\frac{1}{p-1}-\frac{n+2s}{2s}}\frac{\alpha}{|x-q_i|^{n+2s}}\leq w_i(x)\leq \lambda_i^{\frac{1}{p-1}-\frac{n+2s}{2s}}\frac{\beta}{|x-q_i|^{n+2s}}, \quad\mbox{ for }|x-q_i|\geq \lambda_i^{-\frac{1}{2s}}.
\end{equation*}
\begin{remark}\label{remark2}Since $\xi_1,\ldots,\xi_k\in\Omega$, without loss of generality we can assume $\xi_1,\ldots,\xi_k\in\Omega^{\delta_*}$, where $\Omega^{\delta_*}:=\{x\in\Omega:\;\mbox{dist}(x,\partial\Omega)> \delta_*\}$ for some $\delta_*\in (0,1)$ fixed. Then $V$ is uniformly bounded from below (and from above due to its regularity) in $\overline{\Omega^{\delta_*}}$ and  there exists $\eta_0>0$ such that $V^{-\frac{1}{2s}}\geq \eta_0$ uniformly in $\overline{\Omega^{\delta_*}}$. Thus, up to renaming the constants $\alpha$ and $\beta$, 
\begin{equation}\label{behavwi}
  \frac{\alpha}{|x-q_i|^{n+2s}}\leq w_i(x)\leq \frac{\beta}{|x-q_i|^{n+2s}} \qquad \mbox{ for } \quad\, |x-q_i|\geq \eta_0,\quad i=1,\ldots,k.
\end{equation}
By simplicity of notation we will assume $\eta_0=1$.
\end{remark}

Since the function $w_i$ does not satisfy Dirichlet boundary conditions, instead of it we will consider the correction $\bar{u}_i$ given as the solution of the linear problem 
\begin{equation}\label{dirichlet2}
\left\{\begin{array} {ll}
(-\Delta)^{s} \bar{u}_i + V(\xi_i)\bar{u}_i = w_i^{p} & \mbox{in }\; \Omega_\varepsilon,\\
 \bar{u}_i= 0 & \mbox{in }\; \mathbb{R}^{n}\setminus\Omega_\varepsilon.
\end{array}\right.
\end{equation}
We will therefore look for our solution as a small perturbation of the superposition of different copies of these functions, that is,
$$u_\varepsilon(x)=\sum_{i=1}^k \bar{u}_i\bigg(\frac{x}{\varepsilon}-q_i\bigg)+\phi(x),$$
with $\phi$ small, which turns out to be of the form \eqref{eq:u_eps} (see Lemma \ref{limit}). This will be done by a Lyapunov-Schmidt type argument: using the information on $\bar{u}_i$ we will transform problem \eqref{dirichlet1} into a non linear problem for $\phi$, which we will solve by using fixed points arguments in an appropriate projected version. After estimating the error of the approximation in a very precise way, an accurate expansion of the energy $J_\varepsilon$ will allow us to reduce the existence of $\phi$ to the solvability of a finite dimensional system. We will finally solve by adjusting the location of the points $q_i$. 

The article is structured as follows: in Section 2 we establish precise asymptotics on the ground state and the Green and Robin functions associated to the problem \eqref{dirichlet2}. Section 3 and 4 are devoted to developing an appropriate solvability theory to find the perturbation $\phi$; first solving the linear version of the problem and then the projected non linear version. Section 5 deals with the delicate energy expansions and the subsequent variational reduction. Section 6 contains the proof of the theorems.

In the sequel $C$ will stand for a positive constant that may change from line to line.

\section{Preliminary results: Green and Robin functions and decay of the ground state}

Let $\Gamma$ be the unique decaying fundamental solution to the problem
\begin{equation}\label{Gammap}
(-\Delta)^{s} \Gamma + \Gamma = \delta_0,
\end{equation}
which satisfies
\begin{equation}\label{behavGamma}
\int_{\R^n}\Gamma(z)\,dz=1\quad \mbox{ and }\quad   \alpha|x|^{-(n+2s)}\leq \Gamma(x)\leq\beta|x|^{-(n+2s)}\qquad \mbox{when }\quad \; |x|\geq1,
\end{equation}
for some positive constants $\alpha,\beta$, see \cite[Lemma C.1]{FLS2016}. 
Fix $\delta_*\in (0,1)$ and define
\begin{equation}\label{Omegaepsstar}
\Omega^{\delta_*}_\varepsilon:=\bigg\{x\in\Omega_\varepsilon:\;\mbox{dist}(x,\partial\Omega_\varepsilon)> \frac{\delta_*}{\varepsilon}\bigg\}.
\end{equation}
Let us consider $q_i,\ldots,q_k$ points of $\Omega^{\delta_*}_\varepsilon$ (or equivalently $\xi_i:=\varepsilon q_i\in \Omega^{\delta_*}$), and denote $\lambda_i:=V(\xi_i)$. 
It can be straightforwardly checked that
\[\Gamma_{\lambda_i}(x):=\frac{1}{\lambda_i}\Gamma(\lambda_i^{\frac{1}{2s}}x)\]
solves the problem
\begin{equation}\label{Gammalambda}
(-\Delta)^{s} \Gamma_{\lambda_i} + \lambda_i \Gamma_{\lambda_i} = \delta_0 \quad \mbox{ in }\quad \mathbb{R}^{n}.
\end{equation}
Furthermore, renaming $\alpha$ and $\beta$, by \eqref{behavGamma} and Remark \ref{remark2} we deduce that 
\begin{equation}\label{behavGammai}
\frac{\alpha}{|x|^{n+2s}}\leq\Gamma_{\lambda_i}(x)\leq \frac{\beta}{|x|^{n+2s}} \quad \mbox{for}\quad |x|\geq 1,\quad i=1,\ldots, k.
\end{equation}
Let $H_{\lambda_i}(\cdot,y)$ be the regular solution to the problem
\begin{equation}\label{Hp}
\left\{\begin{array} {ll}
(-\Delta)^{s} H_{\lambda_i}(\cdot,y) + \lambda_i H_{\lambda_i}(\cdot,y) =0 & \mbox{in }\; \Omega_\varepsilon,\\
 H_{\lambda_i}(\cdot,y) = \Gamma_{\lambda_i} (\cdot-y) & \mbox{in }\; \mathbb{R}^{n}\setminus\Omega_\varepsilon,
\end{array}\right.
\end{equation}
for $y\in \Omega_\varepsilon$. Then
\[G_{\lambda_i}(x,y):=\Gamma_{\lambda_i} (x-y)-H_{\lambda_i}(x,y)\]
is the Green function for $(-\Delta)^s+{\lambda_i}$ in $\Omega_\varepsilon$, that is, $G_{\lambda_i}(\cdot,y)$ solves
\begin{equation}\label{Greenp}
\left\{\begin{array} {ll}
(-\Delta)^{s} G_{\lambda_i}(\cdot,y) + \lambda_i G_{\lambda_i}(\cdot,y) = \delta_y & \mbox{in }\; \Omega_\varepsilon,\\
 G_{\lambda_i}(\cdot,y) = 0 & \mbox{in }\; \mathbb{R}^{n}\setminus\Omega_\varepsilon.
\end{array}\right.
\end{equation}
Proceeding like in \cite[Section 2]{DdPDV2015}, we can establish the following:
\begin{proposition}\label{prop:boundH}
Let $q\in\Omega_\varepsilon$ such that $\rho:=\mbox{dist}(q,\partial\Omega_\varepsilon)\geq 2$. Then, for every $i=1,\ldots, k$,
$$H_{\lambda_i}(x,y)\leq \frac{C}{\rho^{n+4s}},\quad x,y\in B_{\rho/2}(q), $$
for a suitable constant $C>0$ uniform in $q_i\in\Omega^{\delta_*}_\varepsilon$.
\end{proposition}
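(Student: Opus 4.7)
The plan is to follow the strategy of \cite{DdPDV2015} and combine the maximum principle with a Poisson-type integral representation of $H_{\lambda_i}$, exploiting both the polynomial decay of $\Gamma_{\lambda_i}$ given by \eqref{behavGammai} and the additional $\lambda_i$-absorption encoded in the Poisson kernel on a ball contained in $\Omega_\varepsilon$.

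First, I would apply the maximum principle to the defining problem \eqref{Hp}: since $\Gamma_{\lambda_i}(\cdot-y)\geq 0$ and $\lambda_i>0$, this yields $0\leq H_{\lambda_i}(\cdot,y)\leq \Gamma_{\lambda_i}(\cdot-y)$ on $\mathbb{R}^n$. In particular, for $z\notin B_\rho(q)$ and $y\in B_{\rho/2}(q)$ we have $|z-y|\geq \rho/2\geq 1$ (using $\rho\geq 2$), so \eqref{behavGammai} gives $H_{\lambda_i}(z,y)\leq \beta |z-y|^{-(n+2s)}$ everywhere outside $B_\rho(q)$. Taken alone, this only yields $H_{\lambda_i}(x,y)=O(\rho^{-(n+2s)})$, so the extra decay must come from a more refined representation.

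Next, since $B_\rho(q)\subset \Omega_\varepsilon$, the function $H_{\lambda_i}(\cdot,y)$ still solves $(-\Delta)^s u+\lambda_i u=0$ on the ball $B_\rho(q)$. The Poisson integral formula for $(-\Delta)^s+\lambda_i$ on $B_\rho(q)$, together with the standard representation of the Poisson kernel via the Dirichlet Green function, $P^{\lambda_i}_{B_\rho(q)}(x,z)=c_{n,s}\int_{B_\rho(q)}G^{\lambda_i}_{B_\rho(q)}(x,w)\,|z-w|^{-(n+2s)}\,dw$, yields for every $x\in B_{\rho/2}(q)$,
\[
H_{\lambda_i}(x,y)\;\leq\;c_{n,s}\int_{B_\rho(q)}G^{\lambda_i}_{B_\rho(q)}(x,w)\left[\int_{\mathbb{R}^n\setminus B_\rho(q)}\frac{dz}{|z-y|^{n+2s}|z-w|^{n+2s}}\right]dw.
\]
For $w$ in the core $B_{\rho/2}(q)$, both $|z-w|$ and $|z-y|$ are at least $\rho/2$ on the outer region $\{|z-q|\geq \rho\}$; splitting this region into the shell $\{|z-q|\in[\rho,2\rho]\}$ and the far zone $\{|z-q|\geq 2\rho\}$ and integrating the product of two polynomial decays in polar coordinates centered at $q$ bounds the inner integral by $C/\rho^{n+4s}$. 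Combined with $\int_{B_\rho(q)}G^{\lambda_i}_{B_\rho(q)}(x,w)\,dw\leq\int_{\mathbb{R}^n}\Gamma_{\lambda_i}(x-w)\,dw\leq C$, uniformly in $q_i\in\Omega^{\delta_*}_\varepsilon$ by Remark \ref{remark2}, the core contribution is already $O(\rho^{-(n+4s)})$.

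The main technical obstacle is controlling the shell $B_\rho(q)\setminus B_{\rho/2}(q)$, where $|z-w|$ may degenerate as $w$ approaches $\partial B_\rho(q)$ and the naive bound on the inner integral blows up. This must be compensated by the vanishing of the fractional Dirichlet Green function near the boundary of the ball: the classical boundary estimate $G^{\lambda_i}_{B_\rho(q)}(x,w)\leq C(\rho-|w-q|)^{s}$, valid for $x$ well inside $B_{\rho/2}(q)$, delicately balances the singular factor $|z-w|^{-(n+2s)}$. A careful layering of the shell by the distance $\delta(w):=\rho-|w-q|$, together with these Green-function boundary estimates and the decay of $\Gamma_{\lambda_i}(z-y)$, should yield a shell contribution also bounded by $C/\rho^{n+4s}$, concluding the proof.
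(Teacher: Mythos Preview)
Your framework---the Poisson representation for $(-\Delta)^s+\lambda_i$ on $B_\rho(q)$ together with the Ikeda--Watanabe identity $P^{\lambda_i}_{B_\rho}(x,z)=c_{n,s}\int_{B_\rho}G^{\lambda_i}_{B_\rho}(x,w)|z-w|^{-(n+2s)}\,dw$---is the right one and matches the route in \cite{DdPDV2015}. The core contribution is also handled correctly. The gap is in the shell estimate. You invoke ``the classical boundary estimate $G^{\lambda_i}_{B_\rho(q)}(x,w)\leq C(\rho-|w-q|)^{s}$'' without tracking the $\rho$-dependence of $C$; the sharp bound coming from $G^{\lambda_i}_{B_\rho}\leq G_{B_\rho}$ is $G_{B_\rho}(x,w)\leq C\rho^{\,s-n}\delta(w)^s$ for $x\in B_{\rho/2}$. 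Integrating this against $\delta(w)^{-2s}$ over the shell gives only $O(1)$, hence a shell contribution of order $\rho^{-(n+2s)}$, not $\rho^{-(n+4s)}$. Interpolating with $G^{\lambda_i}_{B_\rho}\leq\Gamma_{\lambda_i}$ improves matters but still yields only $\rho^{\,2s-n-4}$ when $s>2/3$, strictly worse than the claimed $\rho^{-(n+4s)}$. So the ``careful layering'' does not close as stated.

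The fix, which is essentially how \cite{DdPDV2015} proceeds, is not to carry the factor $|z-y|^{-(n+2s)}$ through the double integral at all. After the Poisson representation, bound $H_{\lambda_i}(z,y)\leq\Gamma_{\lambda_i}(z-y)\leq C\rho^{-(n+2s)}$ \emph{uniformly} for $z\notin B_\rho(q)$, $y\in B_{\rho/2}(q)$. Then
\[
H_{\lambda_i}(x,y)\leq C\rho^{-(n+2s)}\int_{\mathbb{R}^n\setminus B_\rho(q)}P^{\lambda_i}_{B_\rho}(x,z)\,dz
= C\rho^{-(n+2s)}\,\psi(x),
\]
where $\psi$ solves $(-\Delta)^s\psi+\lambda_i\psi=0$ in $B_\rho(q)$, $\psi=1$ outside. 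The function $\bar\psi(x):=\int_{\mathbb{R}^n\setminus B_\rho(q)}\Gamma_{\lambda_i}(x-z)\,dz$ is $\lambda_i$-harmonic in $B_\rho(q)$, bounded below by a positive constant on $\mathbb{R}^n\setminus B_\rho(q)$, and satisfies $\bar\psi(x)\leq C\rho^{-2s}$ for $x\in B_{\rho/2}(q)$ by \eqref{behavGammai}. Comparison gives $\psi\leq C\bar\psi$, hence $\psi(x)\leq C\rho^{-2s}$ on $B_{\rho/2}(q)$, and the bound $H_{\lambda_i}(x,y)\leq C\rho^{-(n+4s)}$ follows with no shell decomposition needed. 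Uniformity in $q_i\in\Omega^{\delta_*}_\varepsilon$ is immediate from Remark~\ref{remark2}.
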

As a consequence, defining
\[\Pi_i(x):=\int_{\Omega_\varepsilon}w_i^p(y)H_{\lambda_i}(x,y)dy,\quad x\in\R^n,\quad i=1,\ldots, k,\]
and 
\begin{equation}\label{defD}
 d:=\min\{\mbox{dist}(q_i,\partial\Omega_\varepsilon),i=1,\ldots,k\}\geq\frac{\delta_*}{\varepsilon},
\end{equation}
we get:
\begin{proposition}\label{prop:boundPi}
Assume $d\geq 2$. Then, for every $i=1,\ldots, k$,
$$\Pi_i(x)\leq \frac{C}{d^{n+4s}},\quad x\in B_{d/8}(q_i), $$
for a suitable constant $C>0$ uniform in $q_i\in\Omega^{\delta_*}_\varepsilon$.
\end{proposition}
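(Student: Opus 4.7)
My plan is to split the integral defining $\Pi_i(x)$ with respect to the concentration point $q_i$ into a near-field and a far-field piece,
\[
\Pi_i(x) \;=\; \int_{B_{d/2}(q_i)} w_i^p(y)\,H_{\lambda_i}(x,y)\,dy \;+\; \int_{\Omega_\varepsilon \setminus B_{d/2}(q_i)} w_i^p(y)\,H_{\lambda_i}(x,y)\,dy \;=:\; I_1(x)+I_2(x),
\]
and to bound each summand by $C/d^{n+4s}$ via complementary estimates on the Robin factor. The point is that $H_{\lambda_i}$ is uniformly small in the near field (thanks to Proposition \ref{prop:boundH}), while $w_i^p$ is small in the far field (thanks to \eqref{behavwi}).

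For $I_1$ I would pull out $H_{\lambda_i}$ uniformly. Since $x\in B_{d/8}(q_i)\subset B_{d/2}(q_i)$ and the hypothesis $d\geq 2$ is exactly the $\rho\geq 2$ required by Proposition \ref{prop:boundH} (applied with $q=q_i$ and $\rho=d$), we get $H_{\lambda_i}(x,y)\leq C/d^{n+4s}$ uniformly for $y\in B_{d/2}(q_i)$. Combining with $\int_{\R^n}w_i^p<\infty$, which follows from \eqref{behavwi} and the boundedness of the ground state (using $p(n+2s)>n$ since $p>1$), yields $I_1(x)\leq C/d^{n+4s}$.

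For $I_2$ I would instead control $H_{\lambda_i}$ by the free fundamental solution and exploit the polynomial decay of $w_i$. The maximum principle, or equivalently the nonnegativity of the Green function in \eqref{Greenp}, gives $0\leq H_{\lambda_i}(x,y)\leq \Gamma_{\lambda_i}(x-y)$, and by \eqref{behavGammai} the latter is at most $C|x-y|^{-(n+2s)}$ whenever $|x-y|\geq 1$. Using $|x-q_i|\leq d/8$ and $|y-q_i|\geq d/2$, the triangle inequality yields $|x-y|\geq |y-q_i|/2\geq d/4\geq 1$. Plugging $w_i^p(y)\leq C|y-q_i|^{-p(n+2s)}$ into the integral and changing variables $z=y-q_i$,
\[
I_2(x) \;\leq\; C\int_{|z|\geq d/2}\frac{dz}{|z|^{(p+1)(n+2s)}} \;\leq\; \frac{C}{d^{(p+1)(n+2s)-n}}.
\]
Since $p\geq 1$ gives $(p+1)(n+2s)-n\geq n+4s$ and $d\geq 2$, this is $\leq C/d^{n+4s}$, and the two pieces combine to the claim.

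The main (quite mild) point to watch is that \eqref{behavwi} requires $|y-q_i|\geq 1$; the unit-ball contribution to $I_1$ is harmless because it is absorbed by the $L^\infty$ bound on $w$ and the finiteness of the measure of $B_1(q_i)$. Uniformity of the constant $C$ in $q_i\in\Omega_\varepsilon^{\delta_*}$ follows directly from that of Proposition \ref{prop:boundH} together with the fact that $\lambda_i=V(\xi_i)$ stays in a fixed compact subinterval of $(0,\infty)$, so all rescaling constants produced by \eqref{eq:w_lambda} remain uniformly controlled. No serious obstacle is expected in the argument.
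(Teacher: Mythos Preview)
Your proof is correct and follows essentially the same approach the paper indicates (the paper skips the proof, citing \cite[Lemma~2.5]{DdPDV2015}, which proceeds by exactly this near-field/far-field splitting). One tiny slip: with only $d\geq 2$ you get $|x-y|\geq d/4\geq \tfrac12$, not $\geq 1$; but since $\Gamma_{\lambda_i}$ is bounded on $\{|z|\geq \tfrac12\}$, the decay bound $\Gamma_{\lambda_i}(z)\leq C|z|^{-(n+2s)}$ still holds there up to enlarging $C$, so the estimate on $I_2$ goes through unchanged.
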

Likewise, considering
$$\Lambda_i(x):=\int_{\R^n\setminus \Omega_\varepsilon}w^p_i(y)\Gamma_{\lambda_i}(x-y)\,dy,\quad x\in\R^n,\quad i=1,\ldots, k,$$
we obtain: 
\begin{proposition}\label{prop:boundLambda}
Assume $d\geq 1$. Then, for every $i=1,\ldots, k$,
$$0\leq \Lambda_i(x)\leq \frac{C}{d^{(n+2s)p}},\quad x\in \Omega_\varepsilon,$$
for a suitable constant $C>0$ uniform in $q_i\in\Omega^{\delta_*}_\varepsilon$.
\end{proposition}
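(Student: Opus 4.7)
The non-negativity is immediate: both $w_i^p$ and $\Gamma_{\lambda_i}$ are non-negative, so $\Lambda_i(x)\geq 0$ for every $x\in\R^n$. I would then focus on the upper bound, and the key observation is that the integration region $\R^n\setminus\Omega_\varepsilon$ lies entirely far from the center $q_i$ of $w_i$. Indeed, since $\mbox{dist}(q_i,\partial\Omega_\varepsilon)\geq d$, any $y\notin\Omega_\varepsilon$ satisfies $|y-q_i|\geq d\geq 1$, so the polynomial decay \eqref{behavwi} applies and gives
\begin{equation*}
w_i(y)\leq \frac{\beta}{|y-q_i|^{n+2s}}\leq\frac{\beta}{d^{n+2s}}\qquad\text{for every }y\in \R^n\setminus\Omega_\varepsilon.
\end{equation*}

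Raising to the $p$-th power, I would then pull the constant bound $w_i^p(y)\leq \beta^p/d^{(n+2s)p}$ out of the integral, which reduces the problem to controlling $\int_{\R^n\setminus\Omega_\varepsilon}\Gamma_{\lambda_i}(x-y)\,dy$ uniformly. Since $\Gamma_{\lambda_i}(z)=\lambda_i^{-1}\Gamma(\lambda_i^{1/(2s)}z)$, the first identity in \eqref{behavGamma} yields
\begin{equation*}
\int_{\R^n}\Gamma_{\lambda_i}(z)\,dz=\lambda_i^{-1-n/(2s)},
\end{equation*}
which is finite and uniformly bounded in $q_i\in\Omega_\varepsilon^{\delta_*}$ because $\lambda_i=V(\xi_i)$ is uniformly bounded above and below on $\overline{\Omega^{\delta_*}}$ by the regularity of $V$ and \eqref{eq:infBoundV}. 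Combining both estimates gives
\begin{equation*}
\Lambda_i(x)\leq \frac{\beta^p}{d^{(n+2s)p}}\int_{\R^n\setminus\Omega_\varepsilon}\Gamma_{\lambda_i}(x-y)\,dy\leq \frac{\beta^p}{d^{(n+2s)p}}\int_{\R^n}\Gamma_{\lambda_i}(z)\,dz\leq \frac{C}{d^{(n+2s)p}},
\end{equation*}
uniformly in $x\in\Omega_\varepsilon$ and $q_i\in\Omega_\varepsilon^{\delta_*}$, as required.

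Since all the estimates used (the pointwise decay \eqref{behavwi}, the $L^1$ mass of $\Gamma$, and the uniform two-sided bound on $\lambda_i$) are already available, I do not expect any essential obstacle in the argument; the proposition is a direct consequence of the fact that the slow polynomial tail of the ground state is nonetheless integrable when paired with the fundamental solution and that the relevant region of integration is uniformly far from the peak location. Note that, in contrast to Proposition \ref{prop:boundH} and Proposition \ref{prop:boundPi}, no maximum principle or Poisson-type representation is needed here, because $\Lambda_i$ is already defined as an explicit convolution integral.
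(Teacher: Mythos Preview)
Your proof is correct and follows essentially the same approach as the one the paper defers to (namely \cite[Lemma 3.1]{DdPDV2015}): bound $w_i^p$ on $\R^n\setminus\Omega_\varepsilon$ using the polynomial decay \eqref{behavwi} together with $|y-q_i|\geq d\geq 1$, then control the remaining factor by the finite $L^1$-mass of $\Gamma_{\lambda_i}$, with uniformity in $i$ coming from Remark~\ref{remark2}. The only remark is that the exact value of $\int_{\R^n}\Gamma_{\lambda_i}$ is immaterial here---what matters is that it is finite and bounded uniformly in $\lambda_i$, which you use correctly.
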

These results can be obtained, via direct adaptations, exactly like Proposition 2.4, Lemma 2.5 and Lemma 3.1 of \cite{DdPDV2015} respectively, so we skip the proof. The only subtle point is the fact that the parameters $\lambda_i$ are uniformly bounded on $i$ from above and from below (see Remark \ref{remark2}), and this allows us to get constants independent of $i$.  In the same way, proceeding like in \cite[Lemma 3.2]{DdPDV2015}, and applying \eqref{behavwi} and \eqref{behavGammai}, we have the following result:
 \begin{lemma}\label{expressionui}Assume $d\geq1$. Then, for every $i=1,\ldots, k$ and every $x\in\Omega_\varepsilon$,
  \[\bar{u}_{i}(x)=w_i(x)-\Lambda_i(x)-\Pi_i(x),\]
  and
  \begin{equation}\label{estexpressui}
  0\leq \Lambda_i(x)=w_i(x)-\bar{u}_{i}(x)-\Pi_i(x)\leq\frac{C}{d^{p(n+2s)}},
  \end{equation}
  for a constant $C>0$ uniform in $q_i\in\Omega^{\delta_*}_\varepsilon$, with $\bar{u}_i$ given in \eqref{dirichlet2}.
\end{lemma}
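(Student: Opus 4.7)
The plan is to derive the identity by representing both $w_i$ and $\bar u_i$ in terms of the same kernel $\Gamma_{\lambda_i}$ and then subtracting. Specifically, since $w_i(x)=w_{\lambda_i}(x-q_i)$ and $w_{\lambda_i}$ solves $(-\Delta)^s w_{\lambda_i}+\lambda_i w_{\lambda_i}=w_{\lambda_i}^p$ in $\mathbb R^n$ by \eqref{eqwlambda}, translation invariance gives
\[
(-\Delta)^s w_i+\lambda_i w_i=w_i^p\quad\text{in }\mathbb R^n.
\]
Since $w_i\in H^s(\mathbb R^n)$ and decays like $|x-q_i|^{-(n+2s)}$ at infinity, convolving with the fundamental solution $\Gamma_{\lambda_i}$ of \eqref{Gammalambda} yields the representation
\[
w_i(x)=\int_{\mathbb R^n}\Gamma_{\lambda_i}(x-y)w_i^p(y)\,dy,\qquad x\in\mathbb R^n.
\]
I would split this integral over $\Omega_\varepsilon$ and $\mathbb R^n\setminus\Omega_\varepsilon$, identifying the second piece as $\Lambda_i(x)$:
\[
w_i(x)=\int_{\Omega_\varepsilon}\Gamma_{\lambda_i}(x-y)w_i^p(y)\,dy+\Lambda_i(x).
\]

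For $\bar u_i$, I would use the Green's representation associated with \eqref{Greenp}. Since $\bar u_i$ solves \eqref{dirichlet2} with right-hand side $w_i^p\in L^\infty(\Omega_\varepsilon)$ (indeed $w_i^p$ decays fast enough), standard theory for the Dirichlet problem for $(-\Delta)^s+\lambda_i$ gives
\[
\bar u_i(x)=\int_{\Omega_\varepsilon}G_{\lambda_i}(x,y)w_i^p(y)\,dy
=\int_{\Omega_\varepsilon}\Gamma_{\lambda_i}(x-y)w_i^p(y)\,dy-\Pi_i(x),
\]
by definition of $G_{\lambda_i}=\Gamma_{\lambda_i}(\cdot-y)-H_{\lambda_i}(\cdot,y)$ and of $\Pi_i$. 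Subtracting the two displays eliminates the common convolution integral and yields
\[
w_i(x)-\bar u_i(x)=\Lambda_i(x)+\Pi_i(x),
\]
which is the claimed identity, valid for every $x\in\Omega_\varepsilon$ (and trivially in $\mathbb R^n\setminus\Omega_\varepsilon$ since $\bar u_i\equiv 0$ there while $w_i=\Lambda_i+\Pi_i$ there by the same computation, using that $H_{\lambda_i}(\cdot,y)=\Gamma_{\lambda_i}(\cdot-y)$ outside $\Omega_\varepsilon$).

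The bound on $\Lambda_i$ in \eqref{estexpressui} is then an immediate consequence of Proposition \ref{prop:boundLambda}, which under the assumption $d\geq1$ gives $0\leq\Lambda_i(x)\leq C d^{-(n+2s)p}$ uniformly in $q_i\in\Omega^{\delta_*}_\varepsilon$; positivity of $\Lambda_i$ is clear from its definition since both $w_i^p$ and $\Gamma_{\lambda_i}$ are nonnegative. The main (minor) technical point is justifying the convolution representation of $w_i$ cleanly; this is standard once one notes that $w_i^p$ decays like $|x-q_i|^{-(n+2s)p}$ with $(n+2s)p>n$, so the convolution is absolutely convergent and the resulting function coincides with $w_i$ by uniqueness of decaying solutions of $(-\Delta)^s v+\lambda_i v=w_i^p$ in $\mathbb R^n$. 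No additional estimates beyond those already quoted from \cite{DdPDV2015} are needed.
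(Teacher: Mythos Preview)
Your proposal is correct and follows essentially the same approach as the paper, which simply defers to \cite[Lemma 3.2]{DdPDV2015}: represent $w_i$ as $\Gamma_{\lambda_i}\ast w_i^p$, represent $\bar u_i$ via the Green's function $G_{\lambda_i}=\Gamma_{\lambda_i}(\cdot-y)-H_{\lambda_i}(\cdot,y)$, subtract, and invoke Proposition~\ref{prop:boundLambda} for the bound. Your write-up is an explicit version of exactly that computation, with the uniformity in $q_i$ coming from \eqref{behavwi} and \eqref{behavGammai} as the paper indicates.
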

Notice that, if we set
\begin{equation}\label{Wq}
W_q(x):=\sum\limits_{i=1}^k w_i(x),\quad\mbox{ and }\quad U_q(x):=\sum\limits_{i=1}^k\bar{u}_i(x),\quad x\in \R^n,
\end{equation}
Lemma \ref{expressionui} immediately implies 
\begin{equation}\label{expressionUq}
U_q(x)=W_q(x)-\sum\limits_{i=1}^{k}(\Lambda_i(x)+\Pi_i(x)),\quad x\in\R^n.
\end{equation}
As a consequence of the previous results, we can establish some bounds concerning the interaction among different peaks.

\begin{lemma}\label{minimum2} Assume $d\geq 2$. 
There exists a constant $C>0$, uniform in $q_i\in\Omega^{\delta_*}_\varepsilon$, such that, if $r, t\geq1,$ then
\[\int_{\Omega_\varepsilon} w_i^{r}(x)\Pi_\ell^t(x)\,dx\leq \frac{C}{d^{t(n+2s)}}, \quad i,\ell=1,\ldots,k.\]
\end{lemma}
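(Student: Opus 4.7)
My plan is a two-region split of the integration domain based on the peak $q_\ell$, combining the sharp pointwise bound of Proposition \ref{prop:boundPi} near $q_\ell$ with a global pointwise bound $\Pi_\ell\leq w_\ell$ on the complement. To justify the latter, I first observe that the decomposition $\Pi_\ell=w_\ell-\bar u_\ell-\Lambda_\ell$ from Lemma \ref{expressionui}, together with the maximum principle (which forces $\bar u_\ell\geq 0$ since $w_\ell^p\geq 0$) and the manifest non-negativity of $\Lambda_\ell$, yields $0\leq \Pi_\ell(x)\leq w_\ell(x)$ for every $x\in\R^n$.

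Then I write $\int_{\Omega_\varepsilon} w_i^r\Pi_\ell^t\,dx = I_1+I_2$, where $I_1$ is the integral over $B_{d/8}(q_\ell)\cap\Omega_\varepsilon$ and $I_2$ is over its complement in $\Omega_\varepsilon$. On $B_{d/8}(q_\ell)$, Proposition \ref{prop:boundPi} gives $\Pi_\ell\leq C/d^{n+4s}$, so
\[
I_1\leq \frac{C}{d^{t(n+4s)}}\int_{\R^n}w_i^r\,dx\leq \frac{C}{d^{t(n+4s)}},
\]
where $\|w_i^r\|_{L^1(\R^n)}$ is uniformly bounded in $i$ because $w_i$ is a rescaling of $w$ by a parameter $\lambda_i$ uniformly controlled above and below by Remark \ref{remark2}, and $w\in L^1\cap L^\infty(\R^n)$.

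For $I_2$ I use $\Pi_\ell\leq w_\ell$. When $d\geq 8$, any $x$ in the complement satisfies $|x-q_\ell|\geq d/8\geq 1$, so \eqref{behavwi} yields $w_\ell(x)\leq \beta\,(8/d)^{n+2s}\leq C/d^{n+2s}$; integrating $w_i^r$ once more gives $I_2\leq C/d^{t(n+2s)}$. For the residual range $2\leq d<8$, $d^{-t(n+2s)}$ is bounded below by a positive constant, so the trivial pointwise inequality $\Pi_\ell\leq\|w\|_{L^\infty}$ already yields $I_2\leq C\leq C\,d^{-t(n+2s)}$ after enlarging $C$. Adding the two contributions and using $d^{-t(n+4s)}\leq d^{-t(n+2s)}$ (since $d\geq 2\geq 1$) gives the claimed bound.

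This argument is essentially bookkeeping and I do not anticipate a substantive obstacle; the conceptual point worth emphasizing is that the target rate $d^{-t(n+2s)}$ is dictated by the slow polynomial tail of $w_\ell$ rather than by the sharper $d^{-(n+4s)}$ decay of $\Pi_\ell$ near $q_\ell$. This is precisely why the pointwise inequality $\Pi_\ell\leq w_\ell$, and not Proposition \ref{prop:boundPi} by itself, is the decisive global ingredient.
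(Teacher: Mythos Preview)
Your proof is correct and follows essentially the same approach as the paper: the same two-region split around $q_\ell$ at radius $d/8$, Proposition~\ref{prop:boundPi} for the near region, and the pointwise comparison $\Pi_\ell\leq w_\ell$ (which the paper invokes directly via Lemma~\ref{expressionui}) together with \eqref{behavwi} for the far region. Your explicit justification of $\Pi_\ell\leq w_\ell$ via the maximum principle and the separate treatment of $2\leq d<8$ are minor refinements the paper leaves implicit, but the argument is otherwise identical.
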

\begin{proof}
Following the ideas in \cite[Lemma 3.4]{DdPDV2015}, we split the integral into two regions, 
\begin{align*}
  &\int_{\Omega_\varepsilon} w_i^{r}(x)\Pi_\ell^t(x)\,dx=\int_{\{|x-q_\ell|\leq \frac{d}{8} \}} w_i^{r}(x)\Pi_\ell^t(x)\,dx+\int_{\{|x-q_\ell|>\frac{d}{8}\}} w_i^{r}(x)\Pi_\ell^t(x)\,dx=:I+II.
\end{align*}
Using Proposition \ref{prop:boundPi} and the decay properties of $w_i$ (see \eqref{behavwi}), it easily follows that
\begin{align*}
  I&\leq \frac{C}{d^{t(n+4s)}} \int_{\{|x-q_\ell|\leq \frac{d}{8} \}} w_i^{r}(x)\,dx\leq \frac{C}{d^{t(n+4s)}}.
\end{align*}
To estimate $II,$ using Lemma \ref{expressionui} and \eqref{behavwi} we get
\[\Pi_\ell(x)\leq w_\ell(x)\leq\frac{C}{|x-q_\ell|^{n+2s}}\leq \frac{C}{d^{n+2s}},\quad \mbox{ for }|x-q_\ell|>\frac{d}{8}.\]
Therefore, since $r\geq 1$,
\begin{align*}
  II&\leq \frac{C}{d^{t(n+2s)}}\int_{\{|x-q_\ell|> \frac{d}{8} \}} w_i^{r}(x)\,dx\leq\frac{C}{d^{t(n+2s)}},
\end{align*}
and the result follows.
\end{proof}
The previous results are crucial to control the corrections introduced to satisfy the boundary conditions. In the same spirit, the energy expansions will rely on the decay properties of the ground state and its derivatives. Consider $w_i$ introduced in \eqref{wi}, and  define 
\begin{equation}\label{deZij}
  Z_{ij}:=\frac{\partial w_i}{\partial x_j},\quad i=1,\ldots,k,\; j=1,\ldots,n.
\end{equation}
\begin{lemma} \label{Zij} There exists a positive constant $C$, uniform in $q_i\in\Omega^{\delta_*}_\varepsilon$, such that
\[|Z_{ij}(x)|\leq \frac{C}{(1+|x-q_i|)^{\nu_1}}\qquad \mbox{ for any }i=1,\ldots,k,\; j=1,\ldots,n,\]
where $\nu_1:=\min\{(n+2s+1),p(n+2s)\}$.
\end{lemma}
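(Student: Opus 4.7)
The plan is to reduce the claim, via the scaling relation defining $w_i$, to a decay estimate on the derivatives of the unrescaled ground state $w$ solving \eqref{gsF}, and then to read off this decay from the integral representation $w=\Gamma\ast w^{p}$ by means of a splitting argument. Concretely, differentiating the identity $w_i(x)=\lambda_i^{1/(p-1)}w\bigl(\lambda_i^{1/(2s)}(x-q_i)\bigr)$ with $\lambda_i=V(\xi_i)$ yields
\begin{equation*}
Z_{ij}(x) = \lambda_i^{\frac{1}{p-1}+\frac{1}{2s}}\,(\partial_j w)\bigl(\lambda_i^{1/(2s)}(x-q_i)\bigr).
\end{equation*}
By Remark \ref{remark2} the numbers $\lambda_i$ are uniformly bounded from above and below in $i$ and in $q_i\in\Omega^{\delta_*}_\varepsilon$, so any pointwise bound $|\partial_j w(y)|\leq C(1+|y|)^{-\nu_1}$ transfers to the claim with constants uniform in $i$ and $q_i$.

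To obtain such a bound on $\partial_j w$, I would use that $\Gamma$ is the decaying fundamental solution of $(-\Delta)^{s}+1$ (see \eqref{Gammap}), so that $w$ admits the representation $w=\Gamma\ast w^{p}$. Differentiating,
\begin{equation*}
\partial_j w(x)=\int_{\R^n}\partial_{j}\Gamma(x-y)\,w^{p}(y)\,dy,
\end{equation*}
interpreted in the distributional sense (equivalently, by integration by parts, $\partial_j w= p\,\Gamma\ast(w^{p-1}\partial_j w)$). The key ingredients are the asymptotic $|\partial_{j}\Gamma(z)|\leq C|z|^{-(n+2s+1)}$ for $|z|\geq 1$, obtained from \eqref{behavGamma} and the radial symmetry of $\Gamma$, together with $w^{p}(y)\leq C(1+|y|)^{-p(n+2s)}$ coming from \eqref{behavw}. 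For $|x|$ large I would split $\R^n$ into three pieces: on $\{|y|\leq |x|/2\}$ one has $|x-y|\geq|x|/2$, so the kernel is of size $|x|^{-(n+2s+1)}$ and, multiplied by the finite integral $\int_{\R^n} w^p<\infty$ (note $p(n+2s)>n$), contributes $O(|x|^{-(n+2s+1)})$; on $\{|x-y|\leq|x|/4\}$ one has $|y|\geq 3|x|/4$, so $w^p(y)\leq C|x|^{-p(n+2s)}$ factors out and leaves a bounded integral of $|\partial_j\Gamma|$, contributing $O(|x|^{-p(n+2s)})$; on the intermediate annulus both factors are controlled and the contribution is subdominant. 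Combining gives $|\partial_j w(x)|\leq C|x|^{-\nu_1}$ for large $|x|$, while for bounded $|x|$ the bound follows from the smoothness of $w$ (see \cite{FLS2016}).

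The main technical obstacle is the possibly non-integrable singularity of $\partial_{j}\Gamma$ near the origin when $2s\leq 1$, which prevents a direct pointwise use of the differentiated convolution on the inner region. I would handle this by a smooth cutoff decomposition $\Gamma=\chi\Gamma+(1-\chi)\Gamma$, moving the derivative onto $w^{p}$ in the inner piece (where $\chi$ is compactly supported and $w^p$ is smooth, so the resulting integral is easily controlled by $w^{p-1}(y)|\partial_j w(y)|$ together with a short bootstrap using the already-known decay of $w$) and keeping it on $\Gamma$ in the outer piece (where $(1-\chi)\partial_j\Gamma$ has only the decay-type behavior $|z|^{-(n+2s+1)}$ at infinity). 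Such a strategy follows the same scheme used for the decay estimates in \cite[Lemmas 3.1--3.2]{DdPDV2015} and produces both competing exponents $n+2s+1$ and $p(n+2s)$, whose minimum is precisely $\nu_1$.
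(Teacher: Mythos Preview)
Your proposal is correct and follows the same route as the paper: reduce by the scaling $w_i(x)=\lambda_i^{1/(p-1)}w(\lambda_i^{1/(2s)}(x-q_i))$ and the uniform bounds on $\lambda_i$ from Remark~\ref{remark2} to the decay estimate $|\partial_j w(y)|\leq C(1+|y|)^{-\nu_1}$. The paper simply quotes this last bound from \cite[Lemma~5.2, (5.5)]{DdPDV2015}, whereas you sketch its proof via $w=\Gamma\ast w^{p}$ and a splitting argument---which is precisely how \cite{DdPDV2015} establishes it; your reference to Lemmas~3.1--3.2 there should be to Lemma~5.2.
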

\begin{proof}
By $(5.5)$ in \cite[Lemma 5.2]{DdPDV2015}, we have that
\begin{align*}
|Z_{ij}(x)|=\left|\frac{\partial w_{\lambda_i}(x-q_i)}{\partial x_j}\right|&=\lambda_i^{\frac{1}{p-1}+\frac{1}{2s}}\left|\frac{\partial w(\lambda_i^{\frac{1}{2s}}(x-q_i))}{\partial x_j}\right|\leq \lambda_i^{\frac{1}{p-1}+\frac{1-\nu_1}{2s}}C(1+|x-q_i|)^{-\nu_1}.
\end{align*}
By Remark \ref{remark2} we conclude the result.
\end{proof}
Likewise, applying Remark \ref{remark2} to \cite[Lemma 5.3]{DdPDV2015} we get the following result:
\begin{lemma} \label{graZij} There exists a positive constant $C$, uniform in $q_i\in\Omega^{\delta_*}_\varepsilon$, such that, 
\[|\nabla Z_{ij}|\leq C|x-q_i|^{-\nu_2}\qquad \mbox{for any} \quad\,\, |x-q_i|\geq1,\]
for any $i=1,\ldots,k$, $j=1,\ldots,n,$ where 
$\nu_2:=\min\{(n+2s+2),p(n+2s)\}.$
\end{lemma}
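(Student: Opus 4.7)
The proof is a direct scaling argument together with the invocation of the analogous estimate for the ground state $w$ in $\mathbb{R}^n$ from \cite[Lemma 5.3]{DdPDV2015}, so the essential work is bookkeeping of the $\lambda_i$--factors and verifying that the region $|x-q_i|\geq 1$ is mapped into the region of applicability of that lemma.

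First I would unfold the definitions. Since $w_i(x)=w_{\lambda_i}(x-q_i)=\lambda_i^{\frac{1}{p-1}} w(\lambda_i^{\frac{1}{2s}}(x-q_i))$, differentiating twice gives, for $k=1,\ldots,n$,
\[
\frac{\partial^{2} w_i}{\partial x_k\partial x_j}(x)=\lambda_i^{\frac{1}{p-1}+\frac{2}{2s}}\,\frac{\partial^{2} w}{\partial y_k\partial y_j}\bigl(\lambda_i^{\frac{1}{2s}}(x-q_i)\bigr),
\]
so that $|\nabla Z_{ij}(x)|\leq \lambda_i^{\frac{1}{p-1}+\frac{1}{s}}\,|\nabla(\partial_j w)(y)|$ with $y:=\lambda_i^{\frac{1}{2s}}(x-q_i)$.

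Next I would quote the known decay for $w$: by \cite[Lemma 5.3]{DdPDV2015} there exists $C_0>0$ such that $|\nabla(\partial_j w)(y)|\leq C_0\,|y|^{-\nu_2}$ whenever $|y|\geq 1$, with $\nu_2=\min\{n+2s+2,\,p(n+2s)\}$ (the two exponents coming, respectively, from the decay of $\Gamma$ differentiated twice and from the nonlinear source $w^p$ in the equation $(-\Delta)^s w + w = w^p$). I need to check that $|x-q_i|\geq 1$ implies $|y|\geq 1$: by Remark \ref{remark2} (with the normalization $\eta_0=1$) we have $\lambda_i^{1/(2s)}\geq 1$ uniformly in $q_i\in\Omega_\varepsilon^{\delta_*}$, so $|y|=\lambda_i^{\frac{1}{2s}}|x-q_i|\geq 1$, as required.

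Substituting and using $|y|^{-\nu_2}=\lambda_i^{-\nu_2/(2s)}|x-q_i|^{-\nu_2}$, I obtain
\[
|\nabla Z_{ij}(x)|\leq C_0\,\lambda_i^{\frac{1}{p-1}+\frac{2-\nu_2}{2s}}\,|x-q_i|^{-\nu_2}, \qquad |x-q_i|\geq 1.
\]
The closing step is to absorb the $\lambda_i$--factor into a uniform constant. Since Remark \ref{remark2} gives $0<V_{\min}\leq \lambda_i=V(\xi_i)\leq V_{\max}$ uniformly for $q_i\in\Omega_\varepsilon^{\delta_*}$ and for all $i$, the exponent $\frac{1}{p-1}+\frac{2-\nu_2}{2s}$ (whatever its sign) yields a quantity bounded above and below by positive constants. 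Setting $C:=C_0\,\max\{V_{\min}^{\,a},V_{\max}^{\,a}\}$ with $a=\frac{1}{p-1}+\frac{2-\nu_2}{2s}$ completes the proof. The only mildly delicate point is confirming the exponent $\nu_2$ as stated, but this is precisely the conclusion of \cite[Lemma 5.3]{DdPDV2015}, so no new argument is needed.
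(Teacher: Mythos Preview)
Your approach is exactly the paper's: the authors simply state that the lemma follows by applying Remark~\ref{remark2} to \cite[Lemma 5.3]{DdPDV2015}, and your scaling computation makes this explicit. One small slip: Remark~\ref{remark2} with $\eta_0=1$ gives $V^{-1/(2s)}\geq 1$, i.e.\ $\lambda_i^{1/(2s)}\leq 1$, not $\geq 1$; thus $|x-q_i|\geq 1$ only yields $|y|\geq \lambda_{\min}^{1/(2s)}$ for some uniform positive lower bound, and you should cover the (compact) shell where $|y|<1$ using the smoothness of $w$ before invoking the decay from \cite[Lemma 5.3]{DdPDV2015}.
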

The following lemmata establish the orthogonality relations among the functions $Z_{ij}$. Indeed, define
\begin{equation}\label{defeta}
\eta:=\min\{|q_i-q_\ell|:i\neq \ell,\;i,\ell=1,\ldots, k\}\gg 1.
\end{equation}

\begin{lemma} \label{orthogonal} The functions $Z_{ij}$ satisfy, for every $i,\ell=1,\ldots, k$, and every $j,m=1,\ldots,n$, the following condition
\[\int_{\mathbb{R}^{n}}Z_{ij}Z_{\ell m}\,dx=\alpha_i\delta_{i\ell}\delta_{jm}+O(\eta^{-\nu_1}),\qquad \alpha_i:=\int_{\mathbb{R}^{n}}Z^{2}_{i 1}\,dx,\]
where  $\nu_1$ is given in Lemma \ref{Zij}.
\end{lemma}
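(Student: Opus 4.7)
The plan is to split the analysis into two cases according to whether $i=\ell$ or $i\neq\ell$. Everything will rest on the radial symmetry of the ground state (handling $i=\ell$) and on the pointwise decay from Lemma \ref{Zij} (handling $i\neq\ell$).

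For $i=\ell$, I would change variables $y=x-q_i$, so that
\[
\int_{\mathbb{R}^n} Z_{ij}Z_{im}\,dx=\int_{\mathbb{R}^n}\partial_j w_{\lambda_i}(y)\,\partial_m w_{\lambda_i}(y)\,dy.
\]
Since $w_{\lambda_i}$ is radial, $\partial_j w_{\lambda_i}(y)=g(|y|)\,y_j$ for a scalar function $g$, so the integrand equals $g(|y|)^2 y_j y_m$. For $j\neq m$ it is odd in $y_j$ and the integral vanishes; for $j=m$, rotational invariance of $w_{\lambda_i}$ implies the value is independent of $j$, so it equals $\alpha_i=\int_{\mathbb{R}^n}Z_{i1}^2\,dx$. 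The error term is therefore not needed in this case (it is absorbed trivially).

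For $i\neq\ell$, I would use Lemma \ref{Zij} to compare the two factors against the distance $\eta$ defined in \eqref{defeta}. Split $\mathbb{R}^n=A\cup A^c$ with $A:=\{x\in\mathbb{R}^n:|x-q_i|\leq |q_i-q_\ell|/2\}$. On $A$, the triangle inequality gives $|x-q_\ell|\geq |q_i-q_\ell|/2\geq \eta/2$, so Lemma \ref{Zij} yields $|Z_{\ell m}(x)|\leq C\eta^{-\nu_1}$; hence
\[
\int_{A}|Z_{ij}Z_{\ell m}|\,dx \leq C\eta^{-\nu_1}\int_{\mathbb{R}^n}|Z_{ij}(x)|\,dx.
\]
On $A^c$ we have $|x-q_i|> |q_i-q_\ell|/2\geq \eta/2$, so $|Z_{ij}(x)|\leq C\eta^{-\nu_1}$, and analogously
\[
\int_{A^c}|Z_{ij}Z_{\ell m}|\,dx \leq C\eta^{-\nu_1}\int_{\mathbb{R}^n}|Z_{\ell m}(x)|\,dx.
\]
Both $L^1$-norms on the right are bounded uniformly in $q_i,q_\ell$ since, again by Lemma \ref{Zij}, $|Z_{ij}|\leq C(1+|\cdot -q_i|)^{-\nu_1}$ with $\nu_1=\min\{n+2s+1,p(n+2s)\}>n$ (the bound $p(n+2s)>n$ uses $p>1$). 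Summing the two contributions yields the announced $O(\eta^{-\nu_1})$.

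There is no real obstacle here: the two ingredients are the radial structure of $w_{\lambda_i}$ and the decay rate already established in Lemma \ref{Zij}. The only mildly delicate point is verifying that $\nu_1>n$ so that the $L^1$ integrals are finite with constants independent of the configuration $\{q_i\}\subset\Omega_\varepsilon^{\delta_*}$, which is the reason one obtains a uniform error of order $\eta^{-\nu_1}$.
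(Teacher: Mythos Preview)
Your proposal is correct and follows essentially the same approach as the paper: radiality handles $i=\ell$ via the change of variables $y=x-q_i$ and the odd/symmetric structure of $g(|y|)^2 y_j y_m$, while for $i\neq\ell$ the paper also splits $\mathbb{R}^n$ into two regions (using $B_{\eta/2}(0)$ after translation) and applies the decay from Lemma~\ref{Zij} together with $\nu_1>n$ to make the remaining factor integrable. The only cosmetic difference is that the paper splits at radius $\eta/2$ rather than $|q_i-q_\ell|/2$, which is immaterial.
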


\begin{proof}
Notice first that, using the radiality of $w$, 
\begin{equation}\label{eq:radialZij}
Z_{ij}(x)=\frac{\partial w_{\lambda_i}(x-q_i)}{\partial x_j}=w'_{\lambda_i}(|x-q_i|)\frac{x_j-(q_{i})_j}{|x-q_i|}.
\end{equation}
Thus, calling $y=x-q_i,$ we have

\[\int_{\mathbb{R}^{n}}Z_{ij}Z_{\ell m}\,dx=\int_{\mathbb{R}^{n}}w'_{\lambda_i}(|y|)w'_{\lambda_\ell}(|y+q_i-q_\ell|)\frac{y_j}{|y|}\frac{y_m+(q_{i})_m-(q_{\ell})_m}{|y+q_i-q_\ell|}\,dy.\]
Hence, if $i=\ell$ and $j=m$,
\begin{equation}\label{eq:iljm1}
\int_{\mathbb{R}^{n}}Z_{ij}^2\,dx=\int_{\mathbb{R}^{n}}w'_{\lambda_i}(|y|)^{2}\frac{y_j^{2}}{|y|^2}\,dy=\int_{\mathbb{R}^{n}}w'_{\lambda_i}(|y|)^{2}\frac{y_1^{2}}{|y|^2}\,dy=\alpha_i,
\end{equation}
and if $i=\ell$ and $j\neq m$,
\begin{equation}\label{eq:iljm2}
\int_{\mathbb{R}^{n}}Z_{ij}Z_{im}\,dx=\int_{\mathbb{R}^{n}}w'_{\lambda_i}(|y|)^{2}\frac{y_j}{|y|}\frac{y_m}{|y|}\, dy=0.\end{equation}
Assume now $i\neq \ell$. Applying Lemma \ref{Zij} we can bound the integral like
\begin{align*}
\left|\int_{\mathbb{R}^{n}}Z_{ij}Z_{\ell m}\,dx\right|&\leq\int_{\mathbb{R}^{n}}|w'_{\lambda_i}(|y|)w'_{\lambda_l}(|y+q_i-q_l|)|\leq C \int_{\mathbb{R}^{n}}\frac{dy}{(1+|y|)^{\nu_1}(1+|y+q_i-q_\ell|)^{\nu_1}}.
\end{align*}
Splitting this integral we can check that, since $\nu_1>n$,
\begin{align*}
\int_{\mathbb{R}^{n}\backslash B_{\frac{\eta}{2}}(0)}\frac{dy}{(1+|y|)^{\nu_1}(1+|y+q_i-q_\ell|)^{\nu_1}}&\leq C\eta^{-\nu_1}\int_{\mathbb{R}^{n}}\frac{dy}{(1+|y+q_i-q_\ell|)^{\nu_1}}\leq C\eta^{-\nu_1},
\end{align*}
and 
\begin{align*}
\int_{B_{\frac{\eta}{2}}(0)}\frac{dy}{(1+|y|)^{\nu_1}(1+|y+q_i-q_\ell|)^{\nu_1}}&\leq C\eta^{-\nu_1}\int_{B_{\frac{\eta}{2}}(0)}\frac{dy}{(1+|y|)^{-\nu_1}}\leq C\eta^{-\nu_1},
\end{align*}
where we have used that $|y+q_i-q_\ell|\geq |q_i-q_\ell|-|y|\geq \eta-\frac{\eta}{2}=\frac{\eta}{2}$ for $y\in B_{\frac{\eta}{2}}(0)$.Thus,
\begin{equation}\label{eq:iljm3}
\int_{\mathbb{R}^{n}}Z_{ij}Z_{\ell m}\,dx=O(\eta^{-\nu_1}).
\end{equation}
The result follows putting together \eqref{eq:iljm1}, \eqref{eq:iljm2} and \eqref{eq:iljm3}.
\end{proof}

\begin{corollary} \label{orthogonal1}
The functions $Z_{ij}$ satisfy, for every $i,\ell=1,\ldots, k$, and every $j,m=1,\ldots,n$, 
\[\int_{\Omega_\varepsilon}Z_{ij}Z_{\ell m}\,dx=\alpha_i\delta_{i\ell}\delta_{jm}+O(\eta^{-\nu_1}+\varepsilon^{2\nu_1-n}),\]
where $\nu_1$ and $\alpha_i$ are given in Lemma \ref{Zij} and Lemma \ref{orthogonal} respectively.
\begin{proof}
By Lemma \ref{orthogonal} we have that
\begin{align*}
  \int_{\Omega_\varepsilon}Z_{ij}Z_{\ell m}\,dx&= \int_{\mathbb{R}^{n}}Z_{ij}Z_{\ell m}\,dx-\int_{\mathbb{R}^{n}\backslash\Omega_\varepsilon}Z_{ij}Z_{\ell m}\,dx =\alpha_i\delta_{ijm\ell }+O(\eta^{-\nu_1})-\int_{\mathbb{R}^{n}\backslash\Omega_\varepsilon}Z_{ij}Z_{\ell m}\, dx,
\end{align*}
and, using Lemma \ref{Zij}, 
\[\left|\int_{\mathbb{R}^{n}\backslash\Omega_\varepsilon}Z_{ij}Z_{\ell m}\,dx\right|\leq C\int_{\mathbb{R}^{n}\backslash\Omega_\varepsilon}|x-q_i|^{-\nu_1}|x-q_\ell|^{-\nu_1}\,dx \leq C\varepsilon^{2\nu_1-n},\]
which gives the desired result.
\end{proof}
\end{corollary}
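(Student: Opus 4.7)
The plan is to exploit Lemma~\ref{orthogonal} directly by extending the domain of integration. Writing
$$\int_{\Omega_\varepsilon} Z_{ij} Z_{\ell m}\, dx = \int_{\R^n} Z_{ij} Z_{\ell m}\, dx - \int_{\R^n \setminus \Omega_\varepsilon} Z_{ij} Z_{\ell m}\, dx,$$
the first term on the right-hand side already yields $\alpha_i\delta_{i\ell}\delta_{jm} + O(\eta^{-\nu_1})$ by Lemma~\ref{orthogonal}. The only remaining task is therefore to quantify the boundary error contributed by the integral over the complement of $\Omega_\varepsilon$, and to check that it is subsumed by the stated remainder.

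For this boundary piece I would invoke the pointwise decay from Lemma~\ref{Zij}, namely $|Z_{ij}(x)|\leq C(1+|x-q_i|)^{-\nu_1}$ and the analogous bound for $Z_{\ell m}$. The key geometric input is that by assumption $q_i,q_\ell\in \Omega^{\delta_*}_\varepsilon$, so for every $x\in \R^n\setminus\Omega_\varepsilon$ one has $|x-q_i|, |x-q_\ell|\geq \delta_*/\varepsilon$. Bounding the product $|Z_{ij}(x)\,Z_{\ell m}(x)|\leq C|x-q_i|^{-\nu_1}|x-q_\ell|^{-\nu_1}$ and using the elementary inequality
$$|x-q_i|^{-\nu_1}|x-q_\ell|^{-\nu_1}\leq \tfrac{1}{2}\bigl(|x-q_i|^{-2\nu_1}+|x-q_\ell|^{-2\nu_1}\bigr)$$
(or, equivalently, a direct change of variables), the complement integral is controlled by a constant multiple of
$$\int_{\{|y|\geq \delta_*/\varepsilon\}}|y|^{-2\nu_1}\, dy \;=\; C\,\varepsilon^{2\nu_1-n},$$
where the integrability at infinity follows from $2\nu_1>n$ (recall $\nu_1\geq n+2s+1$). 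This produces exactly the error $O(\varepsilon^{2\nu_1-n})$ appearing in the statement, uniformly in $q_i\in \Omega^{\delta_*}_\varepsilon$, and combining with the $O(\eta^{-\nu_1})$ term from Lemma~\ref{orthogonal} concludes the proof.

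I do not anticipate any genuine obstacle: the corollary is essentially a bookkeeping consequence of Lemma~\ref{orthogonal} together with the far-field decay of the $Z_{ij}$. The only point that requires (minimal) care is verifying that $\nu_1$ is large enough for the tail integral over $\R^n\setminus\Omega_\varepsilon$ to converge, which is immediate from the definition of $\nu_1$.
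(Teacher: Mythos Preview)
Your proposal is correct and follows essentially the same route as the paper: decompose the integral as $\int_{\R^n}-\int_{\R^n\setminus\Omega_\varepsilon}$, invoke Lemma~\ref{orthogonal} for the first piece, and bound the tail via Lemma~\ref{Zij} and the fact that $\mbox{dist}(q_i,\partial\Omega_\varepsilon)\geq \delta_*/\varepsilon$. One small slip: $\nu_1=\min\{n+2s+1,\,p(n+2s)\}$ need not be $\geq n+2s+1$, but one always has $\nu_1>n+2s>n$, which is all that is needed for $2\nu_1>n$ and hence for the convergence of the tail integral.
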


\begin{corollary}\label{cor:derw_qij}
Let $q_{ij}$ denote the $j$-th coordinate of $q_i$. Then
$$\frac{\partial w_i}{\partial q_{ij}}(x)=-\frac{\partial w_i}{\partial x_j}(x)+O\bigg(\frac{\varepsilon}{(1+|x-q_i|)^{\nu_1-1}}\bigg)\qquad\mbox{ for every }i=1,\ldots, k,\quad j=1,\ldots,n,$$
with $\nu_1$ given in Lemma \ref{Zij}.
\end{corollary}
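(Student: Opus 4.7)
The plan is to compute $\partial w_i/\partial q_{ij}$ directly by the chain rule, noticing that $w_i(x)=w_{\lambda_i}(x-q_i)$ depends on $q_i$ through two channels: the translation $x-q_i$ and the parameter $\lambda_i=V(\xi_i)=V(\varepsilon q_i)$. The translation channel produces precisely $-\partial w_i/\partial x_j=-Z_{ij}(x)$, so the entire content of the corollary is to show that the $\lambda$-channel contributes only an error of order $\varepsilon$ with the stated spatial decay.

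First I would write
\[
\frac{\partial w_i}{\partial q_{ij}}(x)=-\frac{\partial w_{\lambda_i}}{\partial y_j}(x-q_i)+\frac{\partial w_{\lambda_i}}{\partial\lambda}(x-q_i)\,\frac{\partial\lambda_i}{\partial q_{ij}},
\]
and observe that since $\lambda_i=V(\varepsilon q_i)$ and $V\in C^2(\overline{\Omega})$, one has $\partial\lambda_i/\partial q_{ij}=\varepsilon\,\partial_j V(\xi_i)=O(\varepsilon)$, uniformly in $q_i\in\Omega^{\delta_*}_\varepsilon$ thanks to Remark \ref{remark2}.

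The key step is to bound $\partial w_\lambda/\partial\lambda$. Using the scaling $w_\lambda(y)=\lambda^{1/(p-1)}w(\lambda^{1/(2s)}y)$, a direct differentiation gives
\[
\frac{\partial w_\lambda}{\partial\lambda}(y)=\frac{1}{p-1}\lambda^{\frac{1}{p-1}-1}w(\lambda^{1/(2s)}y)+\frac{1}{2s}\lambda^{\frac{1}{p-1}+\frac{1}{2s}-1}\,\nabla w(\lambda^{1/(2s)}y)\cdot y.
\]
Using $\lambda=\lambda_i$ uniformly bounded from above and below (Remark \ref{remark2}), the decay $w(z)\leq C(1+|z|)^{-(n+2s)}$ from \eqref{behavw}, and the pointwise estimate $|\nabla w(z)|\leq C(1+|z|)^{-\nu_1}$ underlying Lemma \ref{Zij}, one gets
\[
\left|\frac{\partial w_{\lambda_i}}{\partial\lambda}(y)\right|\leq \frac{C}{(1+|y|)^{n+2s}}+\frac{C|y|}{(1+|y|)^{\nu_1}}\leq \frac{C}{(1+|y|)^{\nu_1-1}},
\]
where we used $\nu_1-1\leq n+2s$ (so the first term is dominated) and $|y|(1+|y|)^{-\nu_1}\leq (1+|y|)^{-(\nu_1-1)}$ for the second.

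Combining, with $y=x-q_i$,
\[
\frac{\partial w_i}{\partial q_{ij}}(x)=-\frac{\partial w_i}{\partial x_j}(x)+O(\varepsilon)\cdot O\!\left(\frac{1}{(1+|x-q_i|)^{\nu_1-1}}\right),
\]
which is the claimed estimate. I do not foresee a real obstacle here: the only mild point is keeping track of the $\lambda_i$-uniformity, which is guaranteed by Remark \ref{remark2}, and checking that $\nu_1-1\leq n+2s$ so that the non-differentiated term $w(\lambda^{1/(2s)}y)$ itself fits within the claimed decay.
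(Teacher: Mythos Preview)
Your proposal is correct and follows essentially the same approach as the paper: compute $\partial w_i/\partial q_{ij}$ by the chain rule, isolate the translation term $-\partial w_i/\partial x_j$, and bound the two $\lambda$-channel terms using $\partial\lambda_i/\partial q_{ij}=\varepsilon\,\partial_j V(\xi_i)$, the decay \eqref{behavw} of $w$, and the decay of $\nabla w$ underlying Lemma~\ref{Zij}. The paper's proof is the same computation, citing Remark~\ref{remark2}, \cite[Claim (5.5)]{DdPDV2015} and Remark~\ref{unifV} for exactly the ingredients you identified.
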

\begin{proof}
By a straightforward computation we get
$$\frac{\partial w_i}{\partial q_{ij}}=-\frac{\partial w_i}{\partial x_j}+\frac{1}{p-1}\lambda_i^{\frac{1}{p-1}-1}\frac{\partial \lambda_i}{\partial q_{ij}}w(\lambda_i^{\frac{1}{2s}}(x-q_i))+\frac{1}{2s}\lambda_i^{\frac{1}{p-1}+\frac{1}{2s}-1}\frac{\partial \lambda_i}{\partial q_{ij}}\big[\nabla w(\lambda_i^{\frac{1}{2s}}(x-q_i))\cdot (x-q_i)\big].$$
Noticing that
$$\frac{\partial \lambda_i}{\partial q_{ij}}=\varepsilon \frac{\partial \lambda_i}{\partial \xi_{ij}}=\varepsilon \frac{\partial V(\xi_i)}{\partial \xi_{ij}},$$
and using Remark \ref{remark2}, \cite[Claim (5.5)]{DdPDV2015} and Remark \ref{unifV} the result follows.
\end{proof}

\section{Linear theory}
\label{Section 5}
Let $\delta_*\in(0,1)$ and $\eta\gg 1$ fixed and consider the configuration space
\begin{equation}\label{Xi}
  \Xi_\eta:=\Big\{q=(q_1,\ldots,q_k):\, \min\{\mbox{dist}(q_i,\partial\Omega_\varepsilon)\}\geq\frac{\delta_*}{\varepsilon},\;\eta:=\min\limits_{i\neq \ell}|q_i-q_\ell|\Big\}.
\end{equation}
For every $q\in \Xi_\eta$ and every
\begin{equation}\label{mu}
\frac{n}{2}<\mu< \frac{n+2s}{2},
\end{equation}
we define
\begin{equation}\label{rho}
\|\phi\|_*:=\|\rho_q^{-1}\phi\|_{L^\infty(\R^n)}\quad \mbox{ where }\quad \rho_q(x):=\sum\limits_{i=1}^k\frac{1}{(1+|x-q_i|)^{\mu}}.
\end{equation}
\begin{remark}\label{unifV}
Since $V\in C^2(\overline{\Omega})$, $\nabla V$ and $D^2V$ are uniformly bounded in $\Omega$.
 \end{remark}
Let us start considering the general problem
\begin{equation}\label{generalp}
\left\{\begin{array} {ll}
(-\Delta)^s\phi+\mathcal{W}\phi+g=0 \,\, & \mbox{in} \, \Omega_\varepsilon,\\
  \phi=0  \,\, &\mbox{in} \, \mathbb{R}^n\setminus\Omega_\varepsilon,
  \end{array}\right.
\end{equation}
where $g\in L^2(\R^n)\cap L^\infty(\R^n)$ and $\mathcal{W}$ is a bounded positive potential.
Replicating \cite[Lemma 6.1]{DdPDV2015} with the obvious adaptations due to $\mathcal{W}$ one can obtain the following regularity results:

\begin{lemma}\label{holderest} Let $\phi\in H^s(\mathbb{R}^n)$ be a solution to \eqref{generalp} and assume  
\begin{equation}\label{eq:general_W}
\kappa:=\inf\limits_{x\in\Omega_\varepsilon}\mathcal{W}>0.
\end{equation}
Then there exists a positive constant $C$, depending only on $\kappa$ and $\|\mathcal{W}\|_{L^\infty(\Omega_\varepsilon)}$, such that
\[\|\phi\|_{L^\infty(\mathbb{R}^n)}+\sup\limits_{x\neq y}\frac{|\phi(x)-\phi(y)|}{|x-y|^s}\leq C(\|g\|_{L^2(\mathbb{R}^n)}+\|g\|_{L^\infty(\mathbb{R}^n)}).\]
\end{lemma}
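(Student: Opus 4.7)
The plan is to adapt the argument of Lemma 6.1 in \cite{DdPDV2015}, where the only modification is to replace the particular zero-order coefficient used there by the general bounded potential $\mathcal{W}$, tracking how the constants depend on $\kappa$ and $\|\mathcal{W}\|_{L^\infty(\Omega_\varepsilon)}$. Since the argument uses of $\mathcal{W}$ only its uniform positivity and boundedness, the extension is essentially mechanical.

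First I would establish an $H^s$-energy bound. Testing the equation against $\phi$ itself, using that $\phi\equiv 0$ in $\R^n\setminus\Omega_\varepsilon$, and invoking the coercivity $\mathcal{W}\geq \kappa>0$, one obtains
\[
[\phi]_{H^s(\R^n)}^2 + \kappa\,\|\phi\|_{L^2(\R^n)}^2 \;\leq\; \|g\|_{L^2(\R^n)}\|\phi\|_{L^2(\R^n)},
\]
whence by Young's inequality $\|\phi\|_{H^s(\R^n)}\leq C(\kappa)\|g\|_{L^2(\R^n)}$. Next, to promote this to an $L^\infty$ bound, I would run a Moser-type iteration: testing against $|\phi|^{p-2}\phi$ for $p\geq 2$, combining the convexity inequality $\int |\phi|^{p-2}\phi\,(-\Delta)^s\phi\,dx \geq c_p [|\phi|^{p/2}]_{H^s(\R^n)}^2$ with the fractional Sobolev embedding $H^s(\R^n)\hookrightarrow L^{2n/(n-2s)}(\R^n)$ applied to $|\phi|^{p/2}$, and using the upper bound on $\mathcal{W}$ to control the zero-order contribution, one obtains a De Giorgi--Moser-type recursion on the sequence of $L^p$ norms. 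After a countable iteration this delivers
\[
\|\phi\|_{L^\infty(\R^n)}\;\leq\; C\bigl(\|g\|_{L^2(\R^n)}+\|g\|_{L^\infty(\R^n)}\bigr),
\]
with $C=C(n,s,\kappa,\|\mathcal{W}\|_{L^\infty(\Omega_\varepsilon)})$. Alternatively, rewriting the equation as $(-\Delta)^s\phi+\kappa\phi=-g-(\mathcal{W}-\kappa)\phi$ and convolving with the fundamental solution $\Gamma_\kappa$ (cf.\ \eqref{Gammalambda}--\eqref{behavGammai}), one can bootstrap on the $L^p$ scale starting from $\phi\in L^{2n/(n-2s)}$ via Young's convolution inequality applied to \eqref{behavGamma}, arriving at the same $L^\infty$ bound.

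Once the $L^\infty$ bound is available, the equation reads $(-\Delta)^s\phi=F$ in $\Omega_\varepsilon$ with $F:=-\mathcal{W}\phi-g\in L^\infty(\R^n)$ and $\phi\equiv 0$ outside the smooth domain $\Omega_\varepsilon$, and $\|F\|_{L^\infty}\leq C(\kappa,\|\mathcal{W}\|_\infty)(\|g\|_{L^2}+\|g\|_\infty)$. The global $C^{0,s}(\R^n)$ bound
\[
\sup_{x\neq y}\frac{|\phi(x)-\phi(y)|}{|x-y|^s}\;\leq\; C\|F\|_{L^\infty(\Omega_\varepsilon)}
\]
then follows from the boundary regularity theory of Ros-Oton--Serra for the Dirichlet fractional Laplacian, since solutions with bounded datum decay at the expected rate $\mathrm{dist}(\cdot,\partial\Omega_\varepsilon)^s$ near the boundary.

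The main technical obstacle is the Moser step: the iteration must be executed keeping each constant only in terms of $\kappa$ and $\|\mathcal{W}\|_{L^\infty}$, not of pointwise behavior of $\mathcal{W}$. This is natural because $\mathcal{W}\phi$ enters as a coercive zero-order term that can be absorbed into the energy once paired with $\phi$, and as a lower-order perturbation otherwise; apart from this bookkeeping, the rest of the argument is standard fractional regularity combined with the boundary behavior provided by the smoothness of $\Omega_\varepsilon$, and follows \cite{DdPDV2015} verbatim.
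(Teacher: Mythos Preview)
Your proposal is correct and follows exactly the approach indicated by the paper, which does not give a self-contained proof but simply states that the result is obtained by ``replicating \cite[Lemma 6.1]{DdPDV2015} with the obvious adaptations due to $\mathcal{W}$''. Your sketch---energy estimate from coercivity, Moser/bootstrap to $L^\infty$, then Ros-Oton--Serra boundary regularity for the $C^{0,s}$ seminorm---is precisely the content of that cited lemma, with the only change being that the specific potential there is replaced by the general $\mathcal{W}$ satisfying \eqref{eq:general_W}, and the constants tracked accordingly.
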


\begin{lemma}\label{generest0}
Suppose $\|g\|_{*}<\infty,$ and let $\phi\in H^s(\mathbb{R}^n)$ be a solution to \eqref{generalp}. Let $r>0$ and $B:=\cup_{i=1}^k B_r(q_i)\subset \Omega_\varepsilon$ such that
$$\inf_{x\in\Omega_\varepsilon\setminus B}\mathcal{W}(x)>0.$$ 
Then
 \[\|\phi\|_*\leq C(\|\phi\|_{L^\infty(B)}+\|g\|_{*}),\]
 for a positive constant $C$ depending on $n, s, \Omega$ and $\mathcal{W}.$
\end{lemma}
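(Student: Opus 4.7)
The plan is to obtain the weighted estimate via pointwise comparison with an explicit barrier, in the spirit of the weighted estimates in \cite{DdPDV2015}. Set $\kappa:=\inf_{\Omega_\varepsilon\setminus B}\mathcal{W}>0$. The heart of the proof is to construct a nonnegative function $\Psi:\mathbb{R}^n\to\mathbb{R}$ enjoying \textbf{(a)} $c_1\rho_q\leq \Psi\leq c_2\rho_q$ pointwise on $\mathbb{R}^n$; \textbf{(b)} $\inf_B\Psi\geq c_3>0$; and \textbf{(c)} $(-\Delta)^s\Psi+\mathcal{W}\Psi\geq\rho_q$ on $\Omega_\varepsilon\setminus B$, with constants $c_j>0$ independent of $\varepsilon$ and of $q\in\Xi_\eta$. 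The natural candidate is $\Psi(x):=A\sum_{i=1}^k(1+|x-q_i|^2)^{-\mu/2}$ with $A$ large. A direct computation with the singular integral, exploiting the smoothness and polynomial decay of $(1+|\cdot|^2)^{-\mu/2}$, yields the pointwise bound $|(-\Delta)^s\Psi(x)|\leq AC_0\sum_i(1+|x-q_i|)^{-\mu-2s}$ with $C_0$ uniform in $q\in\Xi_\eta$.

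With $\Psi$ in hand, the rest is a standard maximum principle argument. Define $M:=C_*(\|\phi\|_{L^\infty(B)}+\|g\|_*)$ with $C_*:=\max\{1/c_3,1\}$ and consider $\omega_\pm:=M\Psi\pm\phi$. By the choice of $M$ and \textbf{(b)} one has $\omega_\pm\geq 0$ on $B$; moreover $\omega_\pm=M\Psi\geq 0$ on $\mathbb{R}^n\setminus\Omega_\varepsilon$ since $\phi$ vanishes there. In $\Omega_\varepsilon\setminus B$, using $|g|\leq\|g\|_*\rho_q$ and \textbf{(c)},
\[
(-\Delta)^s\omega_\pm+\mathcal{W}\omega_\pm=M\bigl[(-\Delta)^s\Psi+\mathcal{W}\Psi\bigr]\mp g\geq (M-\|g\|_*)\rho_q\geq 0,
\]
since $M\geq\|g\|_*$. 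The maximum principle for $(-\Delta)^s+\mathcal{W}$ on $\Omega_\varepsilon\setminus B$, valid thanks to $\mathcal{W}\geq\kappa>0$ there and to $\omega_\pm\geq 0$ on the complementary set $B\cup(\mathbb{R}^n\setminus\Omega_\varepsilon)$, then forces $\omega_\pm\geq 0$ in all of $\mathbb{R}^n$. Hence $|\phi(x)|\leq M\Psi(x)\leq c_2M\rho_q(x)$ for every $x$, which is exactly $\|\phi\|_*\leq C(\|\phi\|_{L^\infty(B)}+\|g\|_*)$.

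The main obstacle is verifying \textbf{(c)}. The point is that $(-\Delta)^s\rho_q$ is not dominated pointwise by a small multiple of $\rho_q$ (locally the ratio is of order one), so one cannot absorb $(-\Delta)^s\Psi$ into $\mathcal{W}\Psi$ via a single algebraic inequality. The way out is to split $\Omega_\varepsilon\setminus B$ into an outer region $\{\text{dist}(x,\{q_i\})\geq R\}$, where the extra factor $(1+|x-q_i|)^{-2s}$ in the bound for $|(-\Delta)^s\Psi|$ makes it negligible compared to $\kappa\Psi\sim\kappa A\rho_q$, and an annular region $\{r\leq\text{dist}(x,\{q_i\})\leq R\}$, where $\rho_q$ is bounded below by a positive constant and \textbf{(c)} reduces to a concrete pointwise inequality satisfied by taking $A$ large. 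Fixing first $R$ (large enough to beat the error in the outer region) and then $A$ (in terms of $R$, $\kappa$ and $\|\mathcal{W}\|_\infty$) completes the construction; once this is done, the maximum principle step is routine.
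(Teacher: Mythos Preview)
Your barrier argument has a genuine gap in the annular region. You need property \textbf{(c)}, namely $(-\Delta)^s\Psi+\mathcal{W}\Psi\geq\rho_q$ on $\Omega_\varepsilon\setminus B$, and you propose to secure it there ``by taking $A$ large''. But $\Psi=A\Psi_0$ with $\Psi_0(x)=\sum_i(1+|x-q_i|^2)^{-\mu/2}$, so
\[
(-\Delta)^s\Psi+\mathcal{W}\Psi \;=\; A\bigl[(-\Delta)^s\Psi_0+\mathcal{W}\Psi_0\bigr],
\]
and the bracket does not depend on $A$. If $(-\Delta)^s\Psi_0+\kappa\Psi_0$ is negative at some point of the annulus (which you have not excluded, and which your own estimate $|(-\Delta)^s\Psi_0|\leq C_0\sum_i(1+|x-q_i|)^{-\mu-2s}$ certainly allows when $r$ is small), then the left-hand side tends to $-\infty$ as $A\to\infty$ and \textbf{(c)} fails. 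Concretely, your outer-region step forces $R$ to satisfy a lower bound of the form $(1+R)^{2s}\gtrsim C_0/\kappa$, while making the bracket positive on the annulus would require $(1+R)^{\mu}\lesssim(\kappa/C_0)(1+r)^{\mu+2s}$; since $r$ is \emph{given} by the hypothesis and may be small, these two constraints on $R$ are in general incompatible. The scheme ``choose $R$ first, then $A$'' cannot close.

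The paper avoids this difficulty by two moves. First, instead of working on $\Omega_\varepsilon\setminus B$, it modifies the potential on $B$: setting $\widetilde{\mathcal{W}}:=\omega\chi_B+\mathcal{W}(1-\chi_B)$ with $\omega:=\inf_{\Omega_\varepsilon\setminus B}\mathcal{W}$ yields $\widetilde{\mathcal{W}}\geq\omega>0$ on all of $\Omega_\varepsilon$, and $\phi$ solves $(-\Delta)^s\phi+\widetilde{\mathcal{W}}\phi=\tilde g$ with $\|\tilde g\|_*\leq C(\|\phi\|_{L^\infty(B)}+\|g\|_*)$. Second, and crucially, the barrier is taken to be the \emph{exact} solution $\tilde\phi\geq 0$ of $(-\Delta)^s\tilde\phi+\omega\tilde\phi=(1+|x|)^{-\mu}$ in $\mathbb{R}^n$ (existing by \cite[Lemmas~2.2,~2.4]{DdPW2014}), for which the needed differential inequality is an \emph{identity}; no sign verification is required. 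One then compares $\phi$ with $\|\tilde g\|_*\sum_i\tilde\phi(\cdot-q_i)$ via the maximum principle on the whole of $\Omega_\varepsilon$. Replacing your explicit $\Psi$ by this $\tilde\phi$-based barrier (or, equivalently, adopting the paper's potential-modification trick) repairs the argument.
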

We closely follow the approach of \cite[Lemma 6.2]{DdPDV2015}, so we only sketch the proof.
\begin{proof} Denote $\omega:=\inf_{x\in\Omega_\varepsilon\setminus B}\mathcal{W}(x)>0$. It can be checked that
\[(-\Delta)^s\phi+\widetilde{\mathcal{W}}\phi=\tilde{g}\]
where
\[\widetilde{\mathcal{W}}:=\omega\chi_B+\mathcal{W}(1-\chi_B), \qquad \tilde{g}:=(\omega-\mathcal{W})\chi_B\phi-g.\]
Hence $\widetilde{\mathcal{W}}\geq\omega$
and
\begin{equation}\label{ineqtg}
\|\tilde{g}\|_{*}\leq\sup\limits_{x\in B}|\rho_q^{-1}(\omega+\mathcal{W})\phi|
+\|g\|_{*}\leq C(1+r)^\mu\|\mathcal{W}\|_{L^\infty(\Omega_\varepsilon)}\|\phi\|_{L^\infty(B)}+\|g\|_{*}\leq C\|\phi\|_{L^\infty(B)}+\|g\|_{*},
\end{equation}
with $C$ depending on $r,\mu$ and $\|\mathcal{W}\|_{L^\infty(\Omega_\varepsilon)}.$

Consider the solution $\tilde{\phi}\in H^s(\mathbb{R}^n)$ (see \cite[(2.4)]{DdPW2014}) to the problem
\[(-\Delta)^s\tilde{\phi}+\omega\tilde{\phi}=(1+|x|)^{-\mu},\]
which is nonnegative and satisfies
\begin{equation}\label{suptphi}
  \sup\limits_{x\in\mathbb{R}^n}(1+|x|)^{\mu}\tilde{\phi}\leq C\sup\limits_{x\in\mathbb{R}^n}(1+|x|)^{\mu}(1+|x|)^{-\mu}=C,
\end{equation}
for some positive constant $C$ depending on $\omega$ (see \cite[Lemma 2.2, Lemma 2.4]{DdPW2014}).
Thus
\[(\widetilde{\mathcal{W}}-\omega)\tilde{\phi}(x-q_i)\geq0, \qquad \forall \, i=1,\ldots,k.\]
Denote 
$$\tilde{\phi}_q(x):=\|\tilde{g}\|_{*}\sum\limits_{i=1}^k\tilde{\phi}(x-q_i)\geq0,$$ 
and $\psi:=\tilde{\phi}_q\pm\phi.$ Then it is easy to check that 
\begin{align*}
 (-\Delta)^s\psi+\widetilde{\mathcal{W}}\psi&\geq 0\qquad \mbox{ in }\Omega_\varepsilon.
\end{align*}
Since $\psi=\tilde{\phi}_q\geq 0$ in $\mathbb{R}^n\setminus\Omega_\varepsilon$, by the maximum principle we conclude $\psi\geq 0$ in $\mathbb{R}^n$. Recalling \eqref{ineqtg} and \eqref{suptphi}, for any $x\in \mathbb{R}^n$ we have
\begin{align*}
 \mp \rho_q^{-1}\phi&= \rho_q^{-1}(\tilde{\phi}_q-\psi) \leq \rho_q^{-1}\tilde{\phi}_q\leq \|\tilde{g}\|_{*}\sup\limits_{y\in\mathbb{R}^n}(1+|y|)^{\mu}\tilde{\phi}(y)\leq  C (\|\phi\|_{L^\infty(B)}+\|g\|_{*}),
\end{align*}
and the proof is concluded.
\end{proof}
Fixing $B=\emptyset$ in Lemma \ref{generest0} we immediately get the following:
\begin{corollary}\label{generest}
Suppose $\|g\|_{*}<\infty,$ and let $\phi\in H^s(\mathbb{R}^n)$ be a solution to \eqref{generalp}. Assume also
\[\inf\limits_{x\in\Omega_\varepsilon}\mathcal{W}(x)>0.\]
Then there exists a constant $C>0$, depending on $n$, $s$, $\Omega$ and $\|\mathcal{W}\|_{L^\infty(\Omega_\varepsilon)}$, such that
\[\|\phi\|_{*}\leq C\|g\|_{*}.\]
\end{corollary}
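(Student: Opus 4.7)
The plan is to view this corollary as the degenerate case of Lemma \ref{generest0} in which the interior set $B$ is empty. Under this choice, the hypothesis $\inf_{\Omega_\varepsilon\setminus B}\mathcal{W}>0$ reduces precisely to $\inf_{\Omega_\varepsilon}\mathcal{W}>0$, and the term $\|\phi\|_{L^\infty(B)}$ on the right-hand side of the conclusion of Lemma \ref{generest0} disappears, leaving the desired inequality $\|\phi\|_*\leq C\|g\|_*$. Formally, the statement of Lemma \ref{generest0} is phrased with $r>0$, but the barrier construction underlying its proof applies verbatim (and becomes even cleaner) with the bookkeeping introduced by the cutoff $\chi_B$ trivialised, so the corollary is most transparently established by unpacking that argument in the simpler setting.

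Explicitly, I set $\omega:=\inf_{\Omega_\varepsilon}\mathcal{W}>0$ and let $\tilde{\phi}\in H^s(\mathbb{R}^n)$ be the nonnegative solution of $(-\Delta)^s\tilde{\phi}+\omega\tilde{\phi}=(1+|x|)^{-\mu}$ on $\mathbb{R}^n$, which satisfies the pointwise decay $\tilde{\phi}(x)\leq C(1+|x|)^{-\mu}$ recorded in \eqref{suptphi}. I then define the barrier
\[\tilde{\phi}_q(x):=\|g\|_*\sum_{i=1}^k\tilde{\phi}(x-q_i)\geq 0,\]
together with the comparison functions $\psi^{\pm}:=\tilde{\phi}_q\pm\phi$. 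Since $(-\Delta)^s\tilde{\phi}_q+\omega\tilde{\phi}_q=\|g\|_*\rho_q$ and $\mathcal{W}\geq\omega$ on $\Omega_\varepsilon$, the definition of $\|g\|_*$ and the equation for $\phi$ yield $(-\Delta)^s\psi^{\pm}+\mathcal{W}\psi^{\pm}\geq\|g\|_*\rho_q\mp g\geq 0$ in $\Omega_\varepsilon$, while $\psi^{\pm}=\tilde{\phi}_q\geq 0$ on $\mathbb{R}^n\setminus\Omega_\varepsilon$ because $\phi$ vanishes there. The fractional maximum principle then gives $\psi^{\pm}\geq 0$ throughout $\mathbb{R}^n$, i.e.\ $|\phi|\leq\tilde{\phi}_q$, and multiplying by $\rho_q^{-1}$ and invoking the decay estimate for $\tilde{\phi}$ produces the desired bound $\|\phi\|_*\leq C\|g\|_*$.

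There is essentially no obstacle beyond this, since the corollary is a direct simplification of Lemma \ref{generest0} (and, through it, of \cite[Lemma 6.2]{DdPDV2015}). The constant $C$ depends only on $n$, $s$, $\Omega$, $\omega$ and $\|\mathcal{W}\|_{L^\infty(\Omega_\varepsilon)}$, matching the dependencies listed in the statement; in particular, no interior radius $r$ enters, so the factor $(1+r)^\mu$ that appeared in \eqref{ineqtg} is absent.
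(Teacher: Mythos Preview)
Your proposal is correct and follows exactly the paper's approach: the paper simply states that the corollary follows by fixing $B=\emptyset$ in Lemma \ref{generest0}, and you have both identified this reduction and spelled out the underlying barrier argument in the simplified setting. There is nothing to add.
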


Define the Hilbert space
 \[X:=\bigg\{\phi\in H^s(\mathbb{R}^n):\phi=0 \,\, \mbox{in}\,  \mathbb{R}^n\setminus\Omega_\varepsilon,\;\; \int_{\Omega_\varepsilon}\phi Z_{ij}=0\,\;\; \mbox{for all}\;\,\, i=1,\ldots,k,\;\,j=1,\ldots, n \bigg\},\]
 with $Z_{ij}$ defined in \eqref{deZij}.
Then, as explained in the introduction, we will look for a solution to \eqref{dirichlet1} in the form
\[u=U_q+\phi,\]
where $U_q$ is given in \eqref{Wq}, $q\in \Xi_\eta$, and $\phi\in X$ is a small function as long as $\varepsilon$ small enough. In terms of $\phi$, $\eqref{dirichlet1}$ becomes
\begin{equation}\label{dirichlet3}
(-\Delta)^{s} \phi + V(\varepsilon x)\phi -p W_q^{p-1}\phi=E(\phi)+N(\phi)\qquad \mbox{in} \; \Omega_\varepsilon,
\end{equation}
where $W_q$ was defined in \eqref{Wq} and
\begin{align}
  E(\phi)&:=(U_q+\phi)^{p}-(W_q+\phi)^{p}+\sum\limits_{i=1}^k(\lambda_i-V(\varepsilon x))\bar{u}_i+\bigg(\sum\limits_{i=1}^kw_i\bigg)^{p}-\sum\limits_{i=1}^kw_i^{p},\label{E}\\
  N(\phi)&:=(W_q+\phi)^{p}-p W_q^{p-1}\phi-W_q^{p}\label{N}.
\end{align}
Notice that $E(\phi)$ reflects the three effects described in the introduction. The first subtraction comes from the boundary correction, the second from the presence of a non constant potential, and the third from the interaction among peaks. 

To avoid the kernel of the linearized operator, instead of solving \eqref{dirichlet3} we will consider first its projected version:
\begin{equation}\label{dirichletProj}
(-\Delta)^{s} \phi + V(\varepsilon x)\phi -p W_q^{p-1}\phi=E(\phi)+N(\phi)+\sum\limits_{i=1}^k\sum\limits_{j=1}^n c_{ij}Z_{ij}\qquad \mbox{in} \; \Omega_\varepsilon,
\end{equation}
for some coefficients $c_{ij}$. We will develop a solvability theory for the associated linear problem, and then in Section \ref{sec:nonlinear} we will handle \eqref{dirichletProj} by means of a fixed point argument. Hence, let us consider
\begin{equation}\label{linearp}
\left\{\begin{array} {ll}
 (-\Delta)^s\phi+V(\varepsilon x)\phi-pW_q^{p-1}\phi+g(x)=\sum\limits_{i=1}^k\sum\limits_{j=1}^n c_{ij}Z_{ij} & \mbox{in }\; \Omega_\varepsilon,\\
 \phi=0 & \mbox{in }\; \mathbb{R}^{n}\setminus\Omega_\varepsilon,\\
 \displaystyle \int_{\Omega_\varepsilon} \phi Z_{ij}=0 & i\in\{1,\ldots,k\},\;\,j\in\{1,\ldots, n\},
\end{array}\right.
\end{equation}
where $g\in L^2(\R^n)\cap L^\infty(\R^n)$.
\begin{lemma}\label{cij}
Let $q\in \Xi_\eta$. The coefficients $c_{ij}$ appearing in \eqref{linearp} satisfy
\[c_{ij}=\frac{1}{\alpha_i}\int_{\mathbb{R}^{n}}Z_{ij}g\,dx+\tilde{f}_{ij},\qquad \alpha_i=\int_{\R^n} Z_{i1}^2\,dx,\]
with
\[|\tilde{f}_{ij}|\leq C(\varepsilon^{\frac{1}{2}}+\eta^{-\min\{1,p-1\}(n+2s)})(\|\phi\|_{L^2(\mathbb{R}^n)}+\|g\|_{L^2(\mathbb{R}^n)}),\]
and $C$ a positive constant independent of $q$.
\end{lemma}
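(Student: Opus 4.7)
The plan is to test the projected equation \eqref{linearp} against each $Z_{\ell m}$ and obtain a nearly diagonal linear system for the unknowns $(c_{ij})$, whose inversion will give the claim. Multiplying \eqref{linearp} by $Z_{\ell m}$ and integrating over $\Omega_\varepsilon$, the right-hand side becomes $\sum_{i,j}c_{ij}\int_{\Omega_\varepsilon}Z_{ij}Z_{\ell m}\,dx$; by Corollary \ref{orthogonal1} the coefficient matrix equals $\mathrm{diag}(\alpha_i)+O(\eta^{-\nu_1}+\varepsilon^{2\nu_1-n})$. For the left-hand side I would use the self-adjointness of the fractional Laplacian (as a bilinear form, which is valid since $\phi\in H^s_0(\Omega_\varepsilon)$ and $Z_{\ell m}\in H^s(\mathbb{R}^n)$) to transfer $(-\Delta)^s$ onto $Z_{\ell m}$, and then substitute the identity
\[(-\Delta)^s Z_{\ell m}+\lambda_\ell Z_{\ell m}=p\,w_\ell^{p-1}Z_{\ell m},\qquad \lambda_\ell=V(\varepsilon q_\ell),\]
obtained by differentiating \eqref{eqwlambda} with respect to $x_m$. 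The left-hand side then reads
\[\int_{\Omega_\varepsilon}g\,Z_{\ell m}\,dx+\int_{\Omega_\varepsilon}(V(\varepsilon x)-\lambda_\ell)\phi\,Z_{\ell m}\,dx-p\int_{\Omega_\varepsilon}(W_q^{p-1}-w_\ell^{p-1})\phi\,Z_{\ell m}\,dx,\]
and extending $g$ by zero outside $\Omega_\varepsilon$ turns the first integral into $\int_{\mathbb{R}^n}g\,Z_{\ell m}\,dx$, which is the desired leading term.

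The remaining two integrals will be absorbed into $\tilde f_{\ell m}$. For the potential piece I split the domain at $|x-q_\ell|=\varepsilon^{-1/2}$: on the inner ball the regularity of $V$ gives $|V(\varepsilon x)-\lambda_\ell|\leq C\varepsilon|x-q_\ell|\leq C\varepsilon^{1/2}$, so Cauchy--Schwarz combined with $\|Z_{\ell m}\|_{L^2}\leq C$ (from Lemma \ref{Zij}, since $\nu_1>n/2$) produces the expected $O(\varepsilon^{1/2}\|\phi\|_{L^2})$; on the outer region I exploit the polynomial tail of $Z_{\ell m}$, whose $L^2$-norm there decays as a positive power of $\varepsilon$ larger than $1/2$ (again because $\nu_1>n/2$), and is therefore absorbed. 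For the interaction piece I separate cases: when $p\geq 2$ I use $|W_q^{p-1}-w_\ell^{p-1}|\leq C\max(W_q,w_\ell)^{p-2}\sum_{i\neq\ell}w_i$, while when $1<p<2$ I use the subadditivity $|W_q^{p-1}-w_\ell^{p-1}|\leq(\sum_{i\neq\ell}w_i)^{p-1}\leq \sum_{i\neq\ell}w_i^{p-1}$. In either case, splitting the integral according to the region $\{|x-q_\ell|\leq \eta/2\}$ and its complement, and using the decay \eqref{behavwi} of $w_i$ at points close to $q_\ell$ (with $i\neq\ell$) together with the bound on $Z_{\ell m}$ from Lemma \ref{Zij}, produces a bound of order $\eta^{-\min\{1,p-1\}(n+2s)}\|\phi\|_{L^2}$ after Cauchy--Schwarz.

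To conclude, I would invert the system $M\vec c=\vec b$, where $M$ has the almost-diagonal structure from Corollary \ref{orthogonal1} and $\vec b$ has components $\int_{\mathbb{R}^n}Z_{\ell m}g\,dx+e_{\ell m}$ with $|e_{\ell m}|$ controlled by the previous paragraph. Writing $M^{-1}=\mathrm{diag}(\alpha_i^{-1})+O(\eta^{-\nu_1}+\varepsilon^{2\nu_1-n})$ and noting that $|\int Z_{\ell m}g\,dx|\leq C\|g\|_{L^2}$ by Cauchy--Schwarz, I obtain
\[c_{\ell m}=\alpha_\ell^{-1}\int_{\mathbb{R}^n}Z_{\ell m}g\,dx+\tilde f_{\ell m},\]
with $\tilde f_{\ell m}$ satisfying the claimed bound, after checking that $\eta^{-\nu_1}+\varepsilon^{2\nu_1-n}\leq C(\varepsilon^{1/2}+\eta^{-\min\{1,p-1\}(n+2s)})$ in the regimes of $p$ and $s$ being considered. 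I expect the main difficulty to be obtaining the interaction estimate with the sharp exponent $\min\{1,p-1\}(n+2s)$: this requires the dichotomy based on whether $p\geq 2$ or $p<2$ and a careful use of the elementary inequality $(a+b)^\alpha\leq a^\alpha+b^\alpha$ for $0<\alpha\leq 1$, together with a precise geometric splitting reflecting which bump is closest to the current point.
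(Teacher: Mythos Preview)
Your overall strategy matches the paper's: test the equation against $Z_{\ell m}$, use the linearized equation \eqref{eqZij} to rewrite the left-hand side, bound the potential and interaction remainders by $\varepsilon^{1/2}$ and $\eta^{-\min\{1,p-1\}(n+2s)}$ respectively, and invert the almost-diagonal Gram matrix. Your estimates for those two remainders are essentially the paper's (the paper cites \cite[Lemma 7.11]{DdPDV2015} for the bound $|W_q^{p-1}-w_\ell^{p-1}|\le C\sum_{h\neq\ell}w_h^{\min\{1,p-1\}}$, which is exactly your dichotomy, and obtains the $\varepsilon^{1/2}$ factor by writing $\varepsilon|x-q_\ell|\le \varepsilon^{1/2}|x-q_\ell|^{1/2}$ instead of splitting the domain, but this is cosmetic).

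There is, however, one genuine technical point you gloss over. The function $Z_{\ell m}$ does \emph{not} vanish outside $\Omega_\varepsilon$, so it is not an admissible test function for the Dirichlet problem \eqref{linearp}, and the identity
\[
\int_{\Omega_\varepsilon}(-\Delta)^s\phi\cdot Z_{\ell m}\,dx=\int_{\Omega_\varepsilon}\phi\,(-\Delta)^s Z_{\ell m}\,dx
\]
does not follow from self-adjointness of the bilinear form alone: the nonlocal operator produces a nonzero contribution $\int_{\mathbb{R}^n\setminus\Omega_\varepsilon}(-\Delta)^s\phi\cdot Z_{\ell m}\,dx$ that must be controlled. The paper handles this by testing instead against a truncation $T_{\ell m}:=\tau_{\varepsilon,\ell}Z_{\ell m}$, where $\tau_{\varepsilon,\ell}$ is a smooth cutoff supported in $B_{c/\varepsilon}(q_\ell)\subset\Omega_\varepsilon$; then $T_{\ell m}\in H^s_0(\Omega_\varepsilon)$ and the integration by parts is legitimate. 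One then uses $\|T_{\ell m}-Z_{\ell m}\|_{H^2(\mathbb{R}^n)}\le C\varepsilon^{n/2}$ (from the tail decay of $Z_{\ell m}$) to replace $T_{\ell m}$ by $Z_{\ell m}$ at the cost of an $O(\varepsilon^{n/2}\|\phi\|_{L^2})$ error. A related minor point: $g$ is given in $L^2(\mathbb{R}^n)$, not just on $\Omega_\varepsilon$, so ``extending by zero'' is not the right description; one has an extra term $\int_{\mathbb{R}^n\setminus\Omega_\varepsilon}gZ_{\ell m}\,dx$, which the paper bounds by $C\varepsilon^{\nu_1-n/2}\|g\|_{L^2}$ using Lemma~\ref{Zij}. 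Once you insert the cutoff, your argument goes through and coincides with the paper's.
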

\begin{proof}
Fix $\ell\in\{1,\ldots,k\}$ and $c>0$ such that $B_{c/\varepsilon}(q_\ell)\subseteq \Omega_\varepsilon$. Consider $\tau_{\varepsilon,\ell}\in C^\infty(\R^n,[0,1])$ such that 
$$\tau_{\varepsilon,\ell}=1\mbox{ in }B_{c/\varepsilon-1}(q_\ell),\quad \tau_{\varepsilon,\ell}=0\mbox{ in }\R^n\setminus B_{c/\varepsilon-1}(q_\ell),\quad|\nabla \tau_{\varepsilon,\ell}|\leq C,$$ 
for some positive constant $C$. Let $T_{\ell m}:= \tau_{\varepsilon,\ell} Z_{\ell m}$. Multiplying \eqref{linearp} by $T_{\ell m}$, we get
\[\sum\limits_{i=1}^k\sum\limits_{j=1}^n c_{ij}\int_{\Omega_\varepsilon}Z_{ij}T_{\ell m}\,dx=\int_{\Omega_\varepsilon}[(-\Delta)^s\phi+V(\varepsilon x)\phi-pW_q^{p-1}\phi+g]T_{\ell m}\,dx.\]
Proceeding like in \cite[Proof of Lemma 7.2]{DdPDV2015} it can be checked that 
$$\|T_{\ell m}-Z_{\ell m}\|_{H^2(\R^n)}\leq C\varepsilon^{\frac n 2},$$
and hence
$$\int_{\Omega_\varepsilon}(-\Delta)^s\phi\, T_{\ell m}\,dx=\int_{\Omega_\varepsilon}\phi\, (-\Delta)^s Z_{\ell m}\,dx+O\big(\varepsilon^{\frac{n}{2}}\|\phi\|_{L^2(\R^n)}\big).$$
Using that $Z_{\ell m}$ satisfies
\begin{equation}\label{eqZij}
(-\Delta)^sZ_{\ell m}+\lambda_\ell Z_{\ell m}=pw_\ell^{p-1}Z_{\ell m},
\end{equation}
one gets
\begin{equation*}\begin{split}
\int_{\Omega_\varepsilon}[(-\Delta)^s\phi+V(\varepsilon x)\phi&-pW_q^{p-1}\phi]T_{\ell m}\,dx\\
&= \int_{\Omega_\varepsilon}[(-\Delta)^sZ_{\ell m}+V(\varepsilon x)Z_{\ell m}-pW_q^{p-1}Z_{\ell m}]\phi\,dx+O(\varepsilon^{\frac{n}{2}}\|\phi\|_{L^2(\R^n)})\\
&=\int_{\Omega_\varepsilon}[pw_\ell^{p-1}-pW_q^{p-1}+V(\varepsilon x)-\lambda_\ell]Z_{\ell m}\phi\,dx+O(\varepsilon^{\frac{n}{2}}\|\phi\|_{L^2(\R^n)}).
\end{split}\end{equation*}
Likewise,
$$\int_{\Omega_\varepsilon}gT_{\ell m}\,dx=\int_{\R^n}g Z_{\ell m}\,dx -\int_{\R^n\setminus\Omega_\varepsilon}g Z_{\ell m}\,dx+O\big(\varepsilon^{\frac{n}{2}}\|g\|_{L^2(\R^n)}\big),$$
and therefore
\begin{align*}
\sum\limits_{i=1}^k\sum\limits_{j=1}^n c_{ij}\int_{\Omega_\varepsilon}Z_{ij}T_{\ell m}\,dx =\int_{\mathbb{R}^n}gZ_{\ell m}\, dx+f_{\ell m}+O(\varepsilon^{\frac{n}{2}}(\|\phi\|_{L^2(\R^n)}+\|g\|_{L^2(\R^n)})),
\end{align*}
where
\[f_{\ell m}:=\int_{\Omega_\varepsilon}(pw_\ell^{p-1}-pW_q^{p-1}+V(\varepsilon x)-\lambda_\ell)Z_{\ell m}\phi\,dx-\int_{\mathbb{R}^n\setminus\Omega_\varepsilon}gZ_{\ell m}\,dx.\]
Let us estimate $f_{\ell m}.$ By H\"{o}lder's inequality and Lemma \ref{Zij}, 
\[\bigg|\int_{\mathbb{R}^n\setminus\Omega_\varepsilon}gZ_{\ell m}\,dx\bigg|\leq\|g\|_{L^2(\mathbb{R}^n)}
\left(\int_{\mathbb{R}^n\setminus\Omega_\varepsilon}\frac{C}{|x-q_\ell|^{2\nu_1}}\,dx\right)^{\frac{1}{2}}\leq C\varepsilon^{\nu_1-\frac{n}{2}}\|g\|_{L^2(\mathbb{R}^n)},\]
for a positive $C$ independent of $q$, and $\nu_1:=\min\{(n+2s+1),p(n+2s)\}$. Using \cite[Lemma 7.11]{DdPDV2015}, Lemma \ref{Zij} and \eqref{behavwi} and the fact that
$$|V(\varepsilon x)-\lambda_\ell|\leq C\varepsilon|x-q_\ell|,$$
with $C>0$ independent of $q\in \Xi_\eta$, we can bound
\begin{align*}
|(pw_\ell^{p-1}-pW_q^{p-1}+V(\varepsilon x)-\lambda_\ell)Z_{\ell m}|&\leq (|pw_\ell^{p-1}-pW_q^{p-1}|+|V(\varepsilon x)-\lambda_\ell|)|Z_{\ell m}|\\
&\leq C\Big(\sum\limits_{h\neq \ell}w_h^{r}+\varepsilon|x-q_\ell|\Big)(1+|x-q_\ell|)^{-\nu_1}\\
  &\leq C \Big(\sum\limits_{h \neq \ell}(1+|x-q_h|)^{-r(n+2s)}+\varepsilon^{\frac{1}{2}}|x-q_\ell|^{1/2})(1+|x-q_\ell|)^{-\nu_1},
\end{align*}
where $r:=\min\{1,p-1\}$. Then
\begin{align*}
\int_{\Omega_\varepsilon}|&(pw_\ell^{p-1}-pW_q^{p-1}+V(\varepsilon x)-\lambda_\ell)Z_{\ell m}\phi|\,dx \\
  & \leq C\bigg(\int_{\Omega_\varepsilon}\bigg(\sum\limits_{h\neq \ell}(1+|x-q_h|)^{-r(n+2s)}\bigg)^2(1+|x-q_\ell|)^{-2\nu_1}\,dx\bigg)^\frac{1}{2}\bigg(\int_{\Omega_\varepsilon}|\phi|^2\,dx\bigg)^\frac{1}{2}\\
  &\quad+C\bigg(\int_{\Omega_\varepsilon}\varepsilon|x-q_\ell|(1+|x-q_\ell|)^{-2\nu_1}\,dx\bigg)^\frac{1}{2}\bigg(\int_{\Omega_\varepsilon}|\phi|^2\,dx\bigg)^\frac{1}{2}\\
  &\leq C (\eta^{-r(n+2s)}+\varepsilon^{\frac{1}{2}})\|\phi\|_{L^2(\mathbb{R}^n)},
\end{align*}
since
\[\int_{\Omega_\varepsilon}\Bigg(\sum\limits_{h\neq \ell}(1+|x-q_h|)^{-r(n+2s)}\Bigg)^2(1+|x-q_\ell|)^{-2\nu_1}\,dx\leq C \eta^{-2r(n+2s)}\]
and $2\nu_1-1>n$, so that
\[\int_{\R^n}|x-q_\ell|(1+|x-q_\ell|)^{-2\nu_1}\,dx\leq C.\]
Thus
\[
|f_{\ell m}|\leq C (\eta^{-r(n+2s)}+\varepsilon^{\frac{1}{2}}+\varepsilon^{\nu_1-\frac{n}{2}})(\|\phi\|_{L^2(\mathbb{R}^n)}+\|g\|_{L^2(\mathbb{R}^n)})\leq C(\eta^{-r(n+2s)}+\varepsilon^{\frac{1}{2}})(\|\phi\|_{L^2(\mathbb{R}^n)}+\|g\|_{L^2(\mathbb{R}^n)}).
\]
On the other hand, from Corollary \ref{orthogonal1},
\[
\int_{\Omega_\varepsilon}Z_{ij}T_{\ell m}\,dx =\int_{\Omega_\varepsilon}Z_{ij}Z_{\ell m}\,dx +O(\varepsilon^{\frac{n}{2}})=\alpha_i\delta_{i\ell}\delta_{jm}+O(\eta^{-r(n+2s)}+\varepsilon^{\frac{1}{2}}).
\]
Mimicking the argument in \cite[Proof of Lemma 7.2]{DdPDV2015} we can conclude that
\[
c_{ij}=\frac{1}{\alpha_i}\int_{\R^n}gZ_{ij}\,dx+\frac{1}{\alpha_i}f_{ij}+O((\eta^{-r(n+2s)}+\varepsilon^{\frac{1}{2}})(\|\phi\|_{L^2(\R^n)}+\|g\|_{L^2(\R^n)})),
\]
and the result follows.
\end{proof}

The rest of this section will be devoted to establishing the next existence result for the linear problem \eqref{linearp}:
 \begin{proposition}\label{exist} Let $g\in L^2(\mathbb{R}^n)$ with $\|g\|_{*}<\infty$ and $q\in\Xi_\eta$. If $\varepsilon$ is small enough, then there exists a unique solution $\phi\in X$ to the problem \eqref{linearp}. Moreover, there exists a constant $C>0$, independent of $q$, such that
 \[\|\phi\|_{*}\leq C \|g\|_{*}.\]
 \end{proposition}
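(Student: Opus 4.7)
The proof splits in two classical steps: (i) an a priori estimate $\|\phi\|_*\leq C\|g\|_*$ valid for every solution of \eqref{linearp}, and (ii) existence and uniqueness via the Fredholm alternative. The main obstacle is step (i), where one has to simultaneously track the behaviour of $\phi$ near each of the $k$ well-separated peaks, exploit the non-degeneracy of the ground state proved in \cite{FLS2016}, and control the Lagrange multipliers $c_{ij}$ produced by the projection.

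For the a priori estimate I argue by contradiction, assuming sequences $\varepsilon_n\to 0$, $q^{(n)}\in\Xi_{\eta_n}$ with $\eta_n\to\infty$, data $g_n$ with $\|g_n\|_*\to 0$, and $\phi_n\in X$ solving \eqref{linearp} with $\|\phi_n\|_*=1$. Since $\mu>n/2$, the function $\rho_{q^{(n)}}$ lies uniformly in $L^2(\mathbb{R}^n)$, so $\|\phi_n\|_{L^2(\mathbb{R}^n)}\leq C$ and $\|g_n\|_{L^2(\mathbb{R}^n)}\to 0$; Lemma \ref{cij} then yields $c_{ij}^{(n)}\to 0$. Because $V(\varepsilon_n\cdot)\geq\inf_\Omega V>0$ and $W_{q^{(n)}}^{p-1}\to 0$ outside a fixed neighbourhood of the peaks, the effective potential $\mathcal{W}_n:=V(\varepsilon_n x)-pW_{q^{(n)}}^{p-1}$ is bounded below by a positive constant on $\Omega_{\varepsilon_n}\setminus B$, where $B:=\bigcup_{i=1}^k B_R(q_i^{(n)})$ for some large fixed $R$. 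Applying Lemma \ref{generest0} (absorbing $\sum c_{ij}^{(n)}Z_{ij}$ into the datum, using $\|Z_{ij}\|_*\leq C$ thanks to $\nu_1>\mu$) gives
\[1=\|\phi_n\|_*\leq C\Big(\max_i\|\phi_n\|_{L^\infty(B_R(q_i^{(n)}))}+\|g_n\|_*+\sum_{ij}|c_{ij}^{(n)}|\Big),\]
so for at least one index $i$, $\|\phi_n\|_{L^\infty(B_R(q_i^{(n)}))}$ stays bounded away from zero along a subsequence.

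I then perform a blow-up at that peak by setting $\tilde\phi_n^i(x):=\phi_n(x+q_i^{(n)})$; the $*$-bound gives $|\tilde\phi_n^i(x)|\leq C(1+|x|)^{-\mu}$, and Lemma \ref{holderest} provides a uniform $C^s_{\mathrm{loc}}$-bound. Passing to a subsequence, and using $q_i^{(n)}\in\Omega_{\varepsilon_n}^{\delta_*}$ so that $\mathrm{dist}(q_i^{(n)},\partial\Omega_{\varepsilon_n})\to\infty$ and $\varepsilon_n q_i^{(n)}\to\hat\xi_i\in\overline{\Omega^{\delta_*}}$, I obtain $\tilde\phi_n^i\to\phi_\infty^i$ locally uniformly and weakly in $H^s(\mathbb{R}^n)$, with $\phi_\infty^i$ decaying like $(1+|x|)^{-\mu}$. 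Since $\eta_n\to\infty$, all cross peaks drift to infinity, so $W_{q^{(n)}}(\,\cdot\,+q_i^{(n)})\to w_{\hat\lambda_i}$ locally uniformly, with $\hat\lambda_i:=V(\hat\xi_i)$. Passing to the limit in the equation (using $c_{ij}^{(n)}\to 0$ and $\|g_n\|_*\to 0$), the limit $\phi_\infty^i$ solves
\[(-\Delta)^s\phi_\infty^i+\hat\lambda_i\phi_\infty^i-pw_{\hat\lambda_i}^{p-1}\phi_\infty^i=0\qquad\text{in }\mathbb{R}^n,\]
and the orthogonality $\int\phi_n Z_{ij}\,dx=0$ passes to the limit via dominated convergence to $\int_{\mathbb{R}^n}\phi_\infty^i\,\partial_j w_{\hat\lambda_i}\,dx=0$ for every $j$. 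The non-degeneracy result of \cite{FLS2016} identifies the kernel of this linearised operator with $\mathrm{span}\{\partial_j w_{\hat\lambda_i}\}$, so the orthogonality forces $\phi_\infty^i\equiv 0$, contradicting the lower bound on $\|\tilde\phi_n^i\|_{L^\infty(B_R(0))}$ and proving the estimate.

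For existence and uniqueness I recast \eqref{linearp} as an abstract equation on the Hilbert space $X\subset H^s_0(\Omega_\varepsilon)$. The continuous and coercive bilinear form $a(\phi,\psi):=\int[(-\Delta)^{s/2}\phi\,(-\Delta)^{s/2}\psi+V(\varepsilon x)\phi\psi]\,dx$ defines an isomorphism $A$ from $H^s_0(\Omega_\varepsilon)$ onto its dual; the map $\phi\mapsto A^{-1}(pW_q^{p-1}\phi)$ is compact by the Rellich embedding, so composing with the bounded projection $P$ onto $X$ that annihilates $\mathrm{span}\{Z_{ij}\}$ turns \eqref{linearp} into a compact perturbation of the identity on $X$, hence Fredholm of index zero. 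The a priori estimate with $g=0$ gives injectivity, therefore surjectivity, and feeding $\|g\|_*$ back through the estimate produces the unique solution with $\|\phi\|_*\leq C\|g\|_*$, as claimed.
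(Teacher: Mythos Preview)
Your a priori estimate (step (i)) is essentially the paper's Lemma \ref{priestimate}: the same contradiction/blow-up scheme, the same use of Lemma \ref{cij} to kill the multipliers, the same reduction via Lemma \ref{generest0} to a neighbourhood of the peaks, and the same appeal to the non-degeneracy of \cite{FLS2016}. One small inaccuracy: the bound $|\tilde\phi_n^i(x)|\leq C(1+|x|)^{-\mu}$ does not hold for finite $n$ (the weight $\rho_{q^{(n)}}$ has contributions from the other peaks), but it does hold in the limit, which is all you need.

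For step (ii) your route genuinely differs from the paper's. The paper does \emph{not} run Fredholm on the Hilbert space $X$; instead it first solves the auxiliary problem \eqref{auxiliaryp} (no $-pW_q^{p-1}\phi$ term) via Riesz and Corollary \ref{generest}, obtaining a solution operator $A$ bounded in the Banach space $Y=\{\|\cdot\|_*<\infty\}$, and then shows that $\mathcal{B}[\phi]:=A[-pW_q^{p-1}\phi]$ is \emph{compact on $Y$} (this uses the uniform H\"older bound of Lemma \ref{holderest} together with the extra decay coming from the factor $W_q^{p-1}$). Fredholm is then applied directly in $Y$, so every solution produced automatically has finite $\|\cdot\|_*$-norm and the a priori estimate applies verbatim.

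Your Hilbert-space argument is legitimate and arguably more elementary (Rellich in a bounded domain replaces the delicate compactness in $Y$), but it leaves a gap you should close: Fredholm in $X$ only hands you $\phi\in H^s_0(\Omega_\varepsilon)$, and the a priori estimate of step (i) is proved by normalising $\|\phi_n\|_*=1$, hence it presupposes $\|\phi\|_*<\infty$. You need an intermediate regularity step---e.g.\ an elliptic bootstrap from $\phi\in H^s_0$ to $\phi\in L^\infty$ (the right-hand side $-V\phi+pW_q^{p-1}\phi-g+\sum c_{ij}Z_{ij}$ is in $L^2$, so one gains integrability and iterates), and then, since $\phi$ is supported in the bounded set $\Omega_\varepsilon$, conclude $\|\phi\|_*<\infty$. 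Only then does Lemma \ref{priestimate} give injectivity (for $g=0$) and the final bound. The paper's approach in $Y$ trades the simpler compactness for avoiding this bootstrap.
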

We will prove this result in two steps: first the {\it a priori} estimate, and then the existence of solution. 
 \begin{lemma}\label{priestimate} Let $g\in L^2(\mathbb{R}^n)$ with $\|g\|_{*}<\infty$ and $q\in\Xi_\eta$. If $\varepsilon$ is small enough and $\phi$ is a solution  of \eqref{linearp}, then there exists a constant $C>0$, independent of $q$, such that
 \[\|\phi\|_{*}\leq C \|g\|_{*}.\]
 \end{lemma}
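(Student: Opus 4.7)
The plan is a blow-up/contradiction argument. Suppose the estimate fails: there exist $\varepsilon_n\to 0$, $q^n\in\Xi_{\eta_n}$ with $\eta_n$ large, data $g_n$ with $\|g_n\|_*\to 0$, Lagrange multipliers $c_{ij}^n$, and solutions $\phi_n\in X$ of \eqref{linearp} normalized so that $\|\phi_n\|_* = 1$. My first step will be to localize the $\|\cdot\|_*$-control of $\phi_n$ to a fixed neighborhood of the peaks. Writing $\mathcal{W}_n(x):=V(\varepsilon_n x)-pW_{q^n}(x)^{p-1}$, the decay bound \eqref{behavwi} together with \eqref{eq:infBoundV} allows me to fix $R>0$ independent of $n$ so that $\mathcal{W}_n\geq\kappa>0$ on $\Omega_{\varepsilon_n}\setminus B_n$, where $B_n:=\bigcup_i B_R(q_i^n)$. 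Rewriting \eqref{linearp} as $(-\Delta)^s\phi_n+\mathcal{W}_n\phi_n=-g_n+\sum_{i,j}c_{ij}^n Z_{ij}^n$ and applying Lemma \ref{generest0}, I obtain
$$\|\phi_n\|_*\leq C\Big(\|\phi_n\|_{L^\infty(B_n)}+\|g_n\|_*+\sum_{i,j}|c_{ij}^n|\,\|Z_{ij}^n\|_*\Big),$$
with $\|Z_{ij}^n\|_*\leq C$ by Lemma \ref{Zij}. Since $\mu>n/2$, the weight $\rho_{q^n}$ belongs to $L^2(\R^n)$ uniformly in $n$, so $\|\phi_n\|_{L^2}\leq C$ and $\|g_n\|_{L^2}\to 0$; hence Lemma \ref{cij} yields $c_{ij}^n\to 0$. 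Consequently $\liminf_n\|\phi_n\|_{L^\infty(B_n)}>0$, and (up to relabelling) I can fix an index $i$ and points $x_n\in B_R(q_i^n)$ with $|\phi_n(x_n)|\geq c>0$.

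My second step will be a blow-up around $q_i^n$. Setting $\tilde\phi_n(x):=\phi_n(x+q_i^n)$ and $\hat\xi_i:=\lim\varepsilon_n q_i^n\in\overline{\Omega^{\delta_*}}$ (along a subsequence), the function $\tilde\phi_n$ satisfies the uniform pointwise bound $|\tilde\phi_n(x)|\leq C(1+|x|)^{-\mu}$ and the shifted equation
$$(-\Delta)^s\tilde\phi_n+V(\varepsilon_n(x+q_i^n))\tilde\phi_n-pW_{q^n}(x+q_i^n)^{p-1}\tilde\phi_n=\tilde h_n(x),$$
where $\tilde h_n(x):=-g_n(x+q_i^n)+\sum_{j,k}c_{jk}^n Z_{jk}^n(x+q_i^n)\to 0$ locally uniformly. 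Because $\eta_n\to\infty$, the bumps $w_j(x+q_i^n)$ with $j\neq i$ vanish in the limit, while $V(\varepsilon_n(\,\cdot\,+q_i^n))\to\lambda_i:=V(\hat\xi_i)$ locally uniformly by the regularity of $V$. Combining the uniform weighted $L^\infty$-bound on $\tilde\phi_n$ with the Hölder estimate of Lemma \ref{holderest} (applied on compacts with a standard cutoff), I extract a subsequential locally uniform limit $\hat\phi\in L^\infty(\R^n)$ satisfying $|\hat\phi(x)|\leq C(1+|x|)^{-\mu}$ and solving
$$(-\Delta)^s\hat\phi+\lambda_i\hat\phi-pw_{\lambda_i}^{p-1}\hat\phi=0\quad\mbox{in }\R^n.$$

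The conclusion will then follow from non-degeneracy plus the orthogonality constraints. By the non-degeneracy theorem of Frank--Lenzmann--Silvestre \cite{FLS2016}, the bounded decaying kernel of this linearization is spanned by $\partial_1 w_{\lambda_i},\ldots,\partial_n w_{\lambda_i}$, so $\hat\phi=\sum_k a_k\partial_k w_{\lambda_i}$ for some $a_k\in\R$. Passing to the limit in the orthogonality relations $\int_{\Omega_{\varepsilon_n}}\phi_n Z_{ij}^n=0$, using dominated convergence together with the uniform decay of $\tilde\phi_n$ and Lemma \ref{Zij} to discard both the interaction with the other peaks (as $\eta_n\to\infty$) and the exterior contribution (as $\varepsilon_n\to 0$), I obtain $\int_{\R^n}\hat\phi\,\partial_j w_{\lambda_i}=0$ for $j=1,\ldots,n$. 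Combined with the diagonal block computed in Lemma \ref{orthogonal}, this forces $a_k=0$, hence $\hat\phi\equiv 0$. But then $\phi_n(x_n)=\tilde\phi_n(x_n-q_i^n)\to 0$ along the subsequence (since $x_n-q_i^n\in B_R(0)$ is bounded), contradicting $|\phi_n(x_n)|\geq c>0$.

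The main obstacle I anticipate is simultaneously controlling the three error sources --- the polynomial tails of $W_{q^n}$, the non-constancy of $V(\varepsilon_n\,\cdot\,)$, and the Lagrange multipliers $c_{ij}^n$ coupled to $\phi_n$ through Lemma \ref{cij} --- in such a way that the localization step cleanly forces $c_{ij}^n\to 0$ and the blow-up limit lands in the bounded, decaying kernel of the correct linearized operator. In particular, the bootstrap $\|g_n\|_*\to 0\Rightarrow\|g_n\|_{L^2}\to 0\Rightarrow c_{ij}^n\to 0$ is the crucial bridge that makes the compactness argument close; without it the shifted equation would carry a persistent forcing term along each $\partial_k w_{\lambda_i}$ that would obstruct the conclusion.
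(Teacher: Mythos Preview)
Your proposal is correct and follows essentially the same contradiction/blow-up scheme as the paper: you invoke Lemma~\ref{generest0} to localize, Lemma~\ref{cij} to kill the multipliers, Lemma~\ref{holderest} for compactness, and the non-degeneracy result of \cite{FLS2016} plus the orthogonality conditions to force the limit to vanish. The only cosmetic difference is the order in which you apply Lemma~\ref{generest0} and perform the blow-up (you localize first to extract a peak where $|\phi_n|$ stays bounded below, whereas the paper first proves $\|\phi_m\|_{L^\infty(\cup_i B_R(q_i^m))}\to 0$ and then localizes), but the logic is identical.
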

 \begin{proof}
Assume by contradiction that there exist sequences $\varepsilon_m$ converging to $0$ as $m\to\infty$, $q_i^m:=\frac{\xi_i}{\varepsilon_m}$, $i=1,\ldots,k$ with
 $\min\{|q_i^m-q_\ell^m|:i\neq \ell\}\rightarrow\infty,$ and $\phi_m,g_m$ satisfying \eqref{linearp} with
 \begin{equation}\label{assumption}
\|\phi_m\|_{*,m}=1, \qquad \|g_m\|_{*,m}\rightarrow 0\quad \mbox{ as } m\to\infty,
\end{equation}
 where 
 $$\rho_{q_m}(x):=\sum\limits_{i=1}^{k}\frac{1}{(1+|x-q_i^m|^\mu)},\qquad \frac{n}{2}<\mu<\frac{n+2s}{2}, \qquad \|\phi\|_{*,m}:=\|\rho_{q_m}^{-1}\phi\|_{L^\infty(\R^n)}.$$
 Using \eqref{assumption}, Lemma \ref{cij}, Lemma \ref{Zij} and \eqref{behavwi}, one can prove that there exists $C>0$ such that
$$\bigg\|-pW_q^{p-1}\phi_m+g_m-\sum\limits_{i=1}^k\sum\limits_{j=1}^n c^m_{ij}Z^m_{ij}\bigg\|_{L^\infty(\R^n)}\leq C,\quad \quad \bigg\|-pW_q^{p-1}\phi_m+g_m-\sum\limits_{i=1}^k\sum\limits_{j=1}^n c^m_{ij}Z^m_{ij}\bigg\|_{L^2(\R^n)}\leq C,$$
and hence, by Lemma \ref{holderest}, the $\phi_m$ are equicontinuous.
 
For any fixed $R>0,$ we claim that
 \begin{equation}\label{claim}
   \sum\limits_{i=1}^{k}\|\phi_m\|_{L^\infty(B_R(q_i^m))}\rightarrow 0\,\, \quad \mbox{as} \quad \, m\rightarrow\infty.
\end{equation}
Indeed, suppose that there exist $\gamma>0$ and $m_0\in\mathbb{N}$ such that, for some fixed $i$, $\|\phi_m\|_{L^\infty(B_R(q_i^m))}\geq\gamma$ for every $m\geq m_0.$ For such $i$, define
 \[\tilde{\phi}_m(x):=\phi_m(x+q_i^m),\qquad \tilde{\Omega}_m:=\{x=y-q_i^m:\, y\in\Omega_{\varepsilon_m}\},\]
and assume $\lambda_i^m=V(\varepsilon_m q_i^m)\rightarrow\tilde{\lambda}>0$. Hence $\tilde{\phi}_m$ satisfies
\begin{equation}\label{sequencep}
\left\{\begin{array} {ll}
 (-\Delta)^s\tilde{\phi}_m+V(\varepsilon_m(q_i^m+x))\tilde{\phi}_m-p\big(w_{\lambda_i^m}+\theta_m(x)\big)^{p-1}\tilde{\phi}_m+\tilde{g}_m=0 & \mbox{in }\; \tilde{\Omega}_m,\\
 \tilde{\phi}_m=0 & \mbox{in }\; \mathbb{R}^{n}\setminus\tilde{\Omega}_m,\\
 \int_{\tilde{\Omega}_m}\tilde{\phi}_m \tilde{Z}_{\ell j}^m dx=0 & \ell\in\{1,\ldots,k\},\;\,j\in\{1,\ldots, n\},
\end{array}\right.
\end{equation}
 where 
 $$\tilde{Z}_{\ell j}^m(x) :=\frac{\partial w_{\lambda_\ell^m}(x+q_i^m-q_\ell^m)}{\partial x_j},\qquad \theta_m(x):=\sum\limits_{\ell\neq i} w_{\lambda_\ell^m}(x+q_i^m-q_\ell^m),$$
  and $$\tilde{g}_m(x):=g_m(x+q_i^m)-\sum\limits_{j=1}^{n}\sum\limits_{\ell=1}^{k}c_{\ell j}^m\partial_{x_j} w_{\lambda_\ell^m}(x+q_i^m-q_\ell^m).$$
Notice that, using the decay properties of $w$, \eqref{assumption} and the definition of $q_i^m$, 
$$\theta_m\to 0\quad \mbox{ and }\quad \tilde{g}_m\to 0\quad \mbox{ uniformly on compact sets}.$$
Since $q_i^m\in \Xi_\eta$, we have that $B_{\delta_*/\varepsilon_m}(q_i^m)\subset\Omega_{\varepsilon_m}.$ Then $B_{\delta_*/\varepsilon_m}(0)\subset\tilde{\Omega}_m$, which implies that $\tilde{\Omega}_m$ converges to $\mathbb{R}^n$ as $m\rightarrow\infty.$

 Moreover we have that
\[
   \|\tilde{\phi}_m\|_{L^\infty(B_R(0))}\geq\gamma \quad \mbox{and}\quad \|\rho_m^{-1}(\cdot+q_i^m)\tilde{\phi}_m\|_{L^\infty(\mathbb{R}^n)}=1 .\]
Since $\{\phi_m\}$ is equicontinuous,  so it is $\{\tilde{\phi}_m\}$. Then, up to a subsequence, $\tilde{\phi}_m\rightarrow\tilde{\phi}$ uniformly on a compact set. Here, $\tilde{\phi}\in L^2(\mathbb{R}^n)$ by Fatou's theorem, \eqref{assumption} and the fact that $\mu>\frac{n}{2}.$

In addition,
\begin{gather}\label{limitest} \|\tilde{\phi}\|_{L^\infty(B_R(0))}\geq\gamma \quad \mbox{and}\quad \|\rho_m^{-1}(\cdot+q_i^m)\tilde{\phi}\|_{L^\infty(\mathbb{R}^n)}\leq1,
\end{gather}
and it can be seen that $\tilde{\phi}$ solves the equation
\[(-\Delta)^s\tilde{\phi}+\tilde{\lambda}\tilde{\phi}-pw_{\tilde{\lambda}}^{p-1}\tilde{\phi}=0\quad \mbox{in}\,\, \mathbb{R}^n,\]
in weak sense (see \cite[Proof of Lemma 7.3]{DdPDV2015}), and then in  a strong sense (see \cite{SV2014}). Thus, by \cite[Theorem 3]{FLS2016}, 
$$\tilde{\phi}\in\mbox{span}\bigg\{\frac{\partial w_{\tilde{\lambda}}}{\partial x_1},\ldots, \frac{\partial w_{\tilde{\lambda}}}{\partial x_n}\bigg\},$$
and passing to the limit in the orthogonality condition \eqref{sequencep} we conclude that $\tilde{\phi}\equiv0,$ which contradicts \eqref{limitest}. Therefore \eqref{claim} holds.

Taking $R$ large enough, by Lemma \ref{generest0} one has
\begin{align*}
  \|\phi_m\|_{*,m} &\leq C\bigg(\|\phi_m\|_{L^\infty(B_R(q_i^m))}+\Big\|g_m+\sum\limits_{ij}c_{ij}^mZ_{ij}^m\Big\|_{*,m} \Bigl) \leq C\bigg(\|\phi_m\|_{L^\infty(B_R(q_i^m))}+\|g_m\|_{*,m}+\sum\limits_{ij}|c_{ij}^m| \bigg),
\end{align*}
where in the second inequality we applied Lemma \ref{Zij} and the fact that $\mu<n+2s.$ Then using \eqref{assumption}, \eqref{claim} and Lemma \ref{cij}, 
\[\|\phi_m\|_{*,m}\rightarrow 0 \quad \mbox{as}\,\, m\rightarrow\infty,\]
a contradiction with \eqref{assumption}.
 \end{proof}

Consider the auxiliary problem
 \begin{equation}\label{auxiliaryp}
\left\{\begin{array} {ll}
 (-\Delta)^s\phi+V(\varepsilon x)\phi+h(x)=\sum\limits_{i=1}^k\sum\limits_{j=1}^n c_{ij}Z_{ij} & \mbox{in }\; \Omega_\varepsilon,\\
 \phi=0 & \mbox{in }\; \mathbb{R}^{n}\setminus\Omega_\varepsilon,\\
 \int_{\Omega_\varepsilon} \phi Z_{ij}=0 & i=1,\ldots,k,\;\,j=1,\ldots, n,
\end{array}\right.
\end{equation}
where $h\in L^2(\R^n)\cap L^\infty(\R^n)$. Then we can prove the following existence result:

 \begin{lemma}\label{existauxil} Let $h\in L^2(\mathbb{R}^n)$ with $\|h\|_{*}<\infty$ and $q\in \Xi_\eta$. Then the problem \eqref{auxiliaryp} has a unique solution $\phi\in X$, that satisfies
 \[\|\phi\|_{*}\leq C \|h\|_{*},\]
 for a positive constant $C$ independent of $q$.
 \end{lemma}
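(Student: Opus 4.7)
The plan is a standard Lax--Milgram argument in the Hilbert subspace $X$, followed by a boot-strap from $L^2$/$H^s$ bounds to the weighted $\|\cdot\|_*$--norm via Corollary \ref{generest}. The key simplification compared to the full linear problem \eqref{linearp} is that the bilinear form associated with $(-\Delta)^s + V(\varepsilon x)$ is directly coercive, so no spectral/Fredholm issue arises; the orthogonality to the $Z_{ij}$ is only needed to encode the Lagrange multipliers.

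First I would equip $X$ with the inner product
\[a(\phi,\psi):=\int_{\R^n}(-\Delta)^{s/2}\phi\,(-\Delta)^{s/2}\psi\,dx+\int_{\Omega_\varepsilon}V(\varepsilon x)\phi\psi\,dx,\]
which, by \eqref{eq:infBoundV}, is equivalent to the usual $H^s(\R^n)$ inner product, so that $X$ is a Hilbert space. Since $\mu>n/2$ one has $\rho_q\in L^2(\R^n)$ with a bound independent of $q\in\Xi_\eta$, hence $\|h\|_{L^2(\R^n)}\leq\|\rho_q\|_{L^2}\|h\|_*\leq C\|h\|_*<\infty$, so $\psi\mapsto -\int_{\Omega_\varepsilon}h\psi\,dx$ is a bounded linear functional on $X$. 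By the Riesz representation theorem there exists a unique $\phi\in X$ with $a(\phi,\psi)=-\int_{\Omega_\varepsilon}h\psi\,dx$ for every $\psi\in X$, and the usual Lagrange multiplier argument (the $Z_{ij}$ span a complement of $X$ in $H^s_0(\Omega_\varepsilon)$, which is finite dimensional and linearly independent for $\varepsilon$ small thanks to Corollary \ref{orthogonal1}) produces constants $c_{ij}\in\R$ such that the equation in \eqref{auxiliaryp} is satisfied in the distributional sense. Uniqueness in $X$ follows by testing the homogeneous equation against $\phi$ itself, since the orthogonality kills the $\sum c_{ij}Z_{ij}$ term and coercivity forces $\phi\equiv 0$.

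Next I would estimate the multipliers $c_{ij}$. Repeating the testing procedure of Lemma \ref{cij} with the cut-off $T_{\ell m}=\tau_{\varepsilon,\ell}Z_{\ell m}$, using $(-\Delta)^sZ_{\ell m}+\lambda_\ell Z_{\ell m}=pw_\ell^{p-1}Z_{\ell m}$, the orthogonality $\int\phi Z_{ij}=0$, and the estimate $|V(\varepsilon x)-\lambda_\ell|\leq C\varepsilon|x-q_\ell|$ together with Corollary \ref{orthogonal1}, one obtains
\[\alpha_\ell c_{\ell m}=\int_{\R^n}hZ_{\ell m}\,dx+O\!\bigl(\varepsilon^{1/2}+\eta^{-\min\{1,p-1\}(n+2s)}\bigr)\bigl(\|\phi\|_{L^2(\R^n)}+\|h\|_{L^2(\R^n)}\bigr).\]
The Riesz identity $a(\phi,\phi)=-\int h\phi$ and coercivity give $\|\phi\|_{H^s(\R^n)}\leq C\|h\|_{L^2(\R^n)}\leq C\|h\|_*$; combined with Lemma \ref{Zij} (which implies $\|Z_{\ell m}\|_{L^2(\R^n)}\leq C$ and hence $|\int hZ_{\ell m}|\leq C\|h\|_{L^2}$) this yields $|c_{ij}|\leq C\|h\|_*$ uniformly in $q\in\Xi_\eta$.

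Finally, I would apply Corollary \ref{generest} with $\mathcal{W}(x)=V(\varepsilon x)$ (which satisfies $\inf_{\Omega_\varepsilon}\mathcal{W}>0$ by \eqref{eq:infBoundV}) and source term $g:=h-\sum_{i,j}c_{ij}Z_{ij}$. Lemma \ref{Zij} together with $\mu<n+2s$ gives $\|Z_{ij}\|_*\leq C$, so $\|g\|_*\leq\|h\|_*+C\sum|c_{ij}|\leq C\|h\|_*$, and therefore $\|\phi\|_*\leq C\|g\|_*\leq C\|h\|_*$ with $C$ independent of $q$. The only mildly delicate step is the bound on the $c_{ij}$, but since the non-local linearisation $-pW_q^{p-1}\phi$ is absent here the argument reduces to a lighter version of Lemma \ref{cij}; no small-parameter contradiction scheme (as in Lemma \ref{priestimate}) is needed.
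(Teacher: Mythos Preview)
Your proposal is correct and follows essentially the same route as the paper, which also invokes Riesz (via \eqref{eq:infBoundV}), Lemma \ref{cij}, and Corollary \ref{generest}. One small slip: in your displayed formula for $\alpha_\ell c_{\ell m}$ the term $p\int_{\Omega_\varepsilon}w_\ell^{p-1}Z_{\ell m}\phi\,dx$ (which arises from $(-\Delta)^sZ_{\ell m}+\lambda_\ell Z_{\ell m}=pw_\ell^{p-1}Z_{\ell m}$ and carries no small $\varepsilon$ or $\eta$ factor) has been dropped --- the orthogonality kills $\int\phi Z_{\ell m}$ but not $\int\phi w_\ell^{p-1}Z_{\ell m}$ --- though this is harmless since that extra term is still bounded by $C\|\phi\|_{L^2}\leq C\|h\|_*$ via the Riesz estimate you already obtained.
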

 
 The proof of this result follows, under straightforward adaptations, like in \cite[Proposition 7.4]{DdPDV2015}, as a consequence of Riesz theorem (thanks to hypohtesis \eqref{eq:infBoundV}), Corollary \ref{generest} and Lemma \ref{cij}. We omit the proof.

Define the Banach space
\begin{equation}\label{defY}
Y:=\{\phi:\mathbb{R}^{n}\rightarrow\mathbb{R}\;: \;\|\phi\|_*<+\infty\}.
\end{equation}
We will solve problem \eqref{linearp} by using Lemma \ref{existauxil} and a fixed point argument in the space $Y$, in the spirit of \cite[Theorem 7.1]{DdPDV2015}. We highlight the differences.
 \begin{proof}[\textbf{Proof of Proposition \ref{exist}.}]
Let $A[h]$ be the unique solution to the problem \eqref{auxiliaryp} for any $h\in L^2(\mathbb{R}^{n})$ with $\|h\|_*<\infty$ provided by Lemma \ref{existauxil}. Then $A$ is well defined and
\[\|A[h]\|_*\leq C\|h\|_*,\]
for some $C>0$ independent of $q$. Hence, to find a solution of \eqref{linearp} is equivalent to solve
\begin{equation}\label{Ap}
  \phi-A[-pW_q^{p-1}\phi]=A[g] \quad \phi\in Y.
\end{equation}

We claim that
$\mathcal{B}[\phi]:=A[-pW_q^{p-1}\phi]$ is a compact operator in $Y$.
Since $\mathcal{B}[\phi]$ is the solution to \eqref{auxiliaryp} with $h=-pW_q^{p-1}\phi$, then
\[\|\mathcal{B}[\phi]\|_*\leq C\|pW_q^{p-1}\phi\|_*\leq C\|\phi\|_*,\]
due to the boundedness of $W_q$, and thus $\mathcal{B}[\phi]\in Y$ for every $\phi\in Y$.

Let $\phi_m$ be a bounded sequence in $Y.$ By Lemma \ref{holderest}, Lemma \ref{Zij} and Lemma \ref{cij},
\begin{align*}
  \sup\limits_{x\neq y} &\frac{|\mathcal{B}[\phi_m](x)-\mathcal{B}[\phi_m](y)|}{|x-y|^s} \leq  C\Bigl(\bigl\|pW_q^{p-1}\phi_m+\sum\limits_{ij}c_{ij}^mZ_{ij}^m\bigl\|_{L^\infty(\mathbb{R}^{n})}+
  \bigl\|pW_q^{p-1}\phi_m+\sum\limits_{ij}c_{ij}^mZ_{ij}^m\bigl\|_{L^2(\mathbb{R}^{n})}\Bigl)\\
  &\leq C\Bigl(\|\phi_m\|_{L^\infty(\mathbb{R}^{n})}+\sum\limits_{ij}|c_{ij}^m|\|Z_{ij}^m\|_{L^\infty(\mathbb{R}^{n})}
  +\|\phi_m\|_*\|W_q^{p-1}\rho_q\|_{L^2(\mathbb{R}^{n})}+\sum\limits_{ij}|c_{ij}^m|\|Z_{ij}^m\|_{L^2(\mathbb{R}^{n})}\Bigl)\\
  &\leq C\Big(\|\phi_m\|_*+\sum\limits_{ij}|c_{ij}^m|\Big)\leq C.
\end{align*}
Then $\mathcal{B}[\phi_m]$ is equicontinuous and it converges to a function $\tilde{b}$ uniformly on a compact set. Namely,
\begin{equation}\label{Blimit}
  \|\mathcal{B}[\phi_m]-\tilde{b}\|_{L^{\infty}(\cup_{i=1}^k  B_R(q_i))}\rightarrow 0\quad \mbox{ when }m\to \infty,\qquad \mbox{ for any }R>0.
\end{equation}
If $x\in \mathbb{R}^{n}\backslash \bigcup\limits_{i=1}^k B_R(q_i),$ then
\begin{align*}
|W_q^{p-1}\phi_m| &\leq \|\phi_m\|_*|W_q^{p-1}\rho_q| \leq C \|\phi_m\|_*\bigg|\bigg(\sum\limits_{i=1}^k w_i^2\bigg)^{\frac{p-1}{2}}\rho_q\bigg|\leq C  \|\phi_m\|_*\rho_q^{1+\frac{p-1}{2}},
\end{align*}
and hence
\[\sup\limits_{x\in \mathbb{R}^{n}\backslash \cup_{i=1}^k B_R(q_i)}|\rho_q^{-1}\mathcal{B}[\phi_m]|\leq C\sup\limits_{x\in \mathbb{R}^{n}\backslash \cup_{i=1}^k B_R(q_i)}\rho_q^{\frac{p-1}{2}},\]
since $\|\phi_m\|_*$ is bounded. It follows that
\[\sup\limits_{x\in \mathbb{R}^{n}\backslash \cup_{i=1}^k B_R(q_i)}|\rho_q^{-1}\tilde{b}|\leq C\sup\limits_{x\in \mathbb{R}^{n}\backslash\cup_{i=1}^k  B_R(q_i)}\rho_q^{\frac{p-1}{2}},\]
and
\begin{align*}
 \|\mathcal{B}[\phi_m]-\tilde{b}\|_* &\leq \sup\limits_{x\in \cup_{i=1}^k  B_R(q_i)}|\rho_q^{-1}(\mathcal{B}[\phi_m]-\tilde{b})| + C\sup\limits_{x\in \mathbb{R}^{n}\backslash \cup_{i=1}^k  B_R(q_i)}\rho_q^{\frac{p-1}{2}}\\
  &\leq \frac{1}{k}(1+R)^\mu\|(\mathcal{B}[\phi_m]-\tilde{b})\|_{L^\infty(\cup_{i=1}^k  B_R(q_i))}+ C\sup\limits_{x\in \mathbb{R}^{n}\backslash \cup_{i=1}^k  B_R(q_i)}\rho_q^{\frac{p-1}{2}}.
\end{align*}
Therefore, by \eqref{Blimit}, $ \|\mathcal{B}[\phi_m]-\tilde{b}\|_*\to 0$ when $m\to\infty$, so $\mathcal{B}$ is a compact operator in $Y$.

Furthermore, from Lemma \ref{existauxil} it follows that $\phi=A[g]=0$ is the only solution to \eqref{auxiliaryp} if $g=0$. Hence, by the Fredholm's alternative, there exists a unique solution $\phi$ to \eqref{Ap} for any $g\in Y$. This and Lemma \ref{priestimate} prove Proposition \ref{exist}.
\end{proof}

We end this section by analyzing the differentiability of the solution $\phi$ of \eqref{linearp} with respect to the parameter $q.$ For this, we define the operator $T_q$ that associates  any $g\in L^2(\mathbb{R}^{n})$ with $\|g\|_*<\infty$ with the corresponding solution of \eqref{linearp}; that is,
\begin{equation}\label{defTg}
\phi:=T_q[g]\,\, \mbox{is the unique solution of \eqref{linearp} in} \, Y.
\end{equation}
Thanks to Proposition \ref{exist}, the operator $T_q$ is linear and continuous from $Y$ to $Y$.

\begin{proposition}\label{differT}
The map $q\mapsto T_q$ is continuously differentiable in $\Xi_\eta$. Furthermore, there exists $C>0$, independent of $q$, such that
\begin{equation}\label{eq:derT}
\Bigl\|\frac{\partial T_q}{\partial q_{ij}}\Bigl\|_*\leq C\Big(\|g\|_*+\Bigl\|\frac{\partial g}{\partial q_{ij}}\Bigl\|_*\Big),\qquad \mbox{ for every }\; i\in\{1,\ldots, k\},\;j\in\{1,\ldots,n\}.
\end{equation}
\end{proposition}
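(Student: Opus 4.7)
The plan is to formally differentiate equation \eqref{linearp} with respect to $q_{ij}$, cast the resulting equation into the framework of Proposition \ref{exist} after a correction that restores the orthogonality condition, and then justify the formal differentiation by a difference-quotient argument.

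Write $\phi = T_q[g]$ and let $\psi := \partial_{q_{ij}} \phi$ be the formal derivative. Differentiating \eqref{linearp} gives
\begin{equation*}
(-\Delta)^s \psi + V(\varepsilon x)\psi - pW_q^{p-1}\psi = \tilde g + \sum_{l,m} \tilde c_{lm} Z_{lm} \quad \mbox{in } \Omega_\varepsilon,
\end{equation*}
where $\tilde c_{lm} = \partial_{q_{ij}} c_{lm}$ and
\begin{equation*}
\tilde g := -\partial_{q_{ij}} g + p(p-1) W_q^{p-2} (\partial_{q_{ij}} W_q)\phi + \sum_{l,m} c_{lm}\, \partial_{q_{ij}} Z_{lm}.
\end{equation*}
Differentiating the orthogonality $\int \phi Z_{lm}=0$ shows that $\int \psi Z_{lm} = -\int \phi\, \partial_{q_{ij}} Z_{lm}$, so $\psi$ is not in $X$. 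I would therefore decompose $\psi = \tilde\psi + \sum_{l,m} b_{lm} Z_{lm}$ and select the $b_{lm}$ to enforce $\tilde\psi \in X$. This reduces to solving the Gram-type system
\begin{equation*}
\sum_{l,m} b_{lm} \int_{\Omega_\varepsilon} Z_{lm} Z_{l'm'}\,dx = -\int_{\Omega_\varepsilon} \phi\, \partial_{q_{ij}} Z_{l'm'}\,dx,
\end{equation*}
which, by Corollary \ref{orthogonal1}, has a coefficient matrix close to $\mathrm{diag}(\alpha_i)$ and is thus uniformly invertible for $\varepsilon$ small and $\eta$ large. Combining with $\|\phi\|_*\le C\|g\|_*$ from Proposition \ref{exist} and the decay estimates of Lemmas \ref{Zij}--\ref{graZij}, one gets $|b_{lm}| \leq C\|g\|_*$.

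Then $\tilde\psi$ solves a projected problem of the same type as \eqref{linearp}, so Proposition \ref{exist} applies provided $\tilde g$ and the corrective terms involving $b_{lm} Z_{lm}$ are controlled in $\|\cdot\|_*$. The key inputs are: Corollary \ref{cor:derw_qij}, which writes $\partial_{q_{ij}} W_q = -Z_{ij} + O(\varepsilon/(1+|x-q_i|)^{\nu_1-1})$ and renders $W_q^{p-2}(\partial_{q_{ij}} W_q)\phi$ bounded in $\|\cdot\|_*$ by $\|\phi\|_*$; Lemmas \ref{Zij}--\ref{graZij}, which together with $\mu < \min\{\nu_1,\nu_2\}$ yield $\|\partial_{q_{ij}} Z_{lm}\|_* \leq C$; and Lemma \ref{cij}, which bounds $|c_{lm}| \leq C\|g\|_*$. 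Assembling these gives $\|\tilde g\|_* \leq C(\|g\|_* + \|\partial_{q_{ij}} g\|_*)$, and Proposition \ref{exist} applied to $\tilde\psi$ produces the same bound for $\tilde\psi$, hence for $\psi$ after adding the correction $\sum b_{lm} Z_{lm}$.

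It remains to justify that $\psi$ is genuinely the partial derivative of $T_q[g]$. For this I would form the incremental quotient $\phi^h := h^{-1}(T_{q+he_{ij}}[g] - T_q[g])$ and compare it with $\psi$: subtracting the projected equations satisfied by both, the difference obeys a problem of the form \eqref{linearp} whose right-hand side is $o(1)$ in $\|\cdot\|_*$ as $h \to 0$, by Taylor expansion and the estimates above. Lemma \ref{priestimate} then forces $\|\phi^h - \psi\|_* \to 0$. Continuity of $q \mapsto \partial_{q_{ij}} T_q$ follows by rerunning the same scheme on $T_{q+\delta q} - T_q$. The main technical obstacle is that the projection space $X$ itself depends on $q$ through the functions $Z_{lm}$, so one cannot differentiate $T_q$ without leaving $X$; the decomposition $\psi = \tilde\psi + \sum b_{lm} Z_{lm}$, together with the near-diagonal structure of the Gram matrix supplied by Corollary \ref{orthogonal1}, is precisely the device that tames this defect uniformly in $q$.
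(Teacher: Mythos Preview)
Your approach is essentially the one the paper uses: correct the (derivative of) $\phi$ by a finite linear combination so that the orthogonality is restored, then feed the corrected function into the a priori estimate for \eqref{linearp}. The paper works directly with difference quotients $D^t\phi$ rather than formal derivatives, but the structure is the same.

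There is one technical oversight. Your correction $\tilde\psi=\psi-\sum_{l,m} b_{lm}Z_{lm}$ will \emph{not} lie in $X$: the functions $Z_{lm}$ do not vanish in $\R^n\setminus\Omega_\varepsilon$ (they only decay polynomially), so $\tilde\psi$ inherits a nonzero tail outside $\Omega_\varepsilon$ and the Dirichlet condition in the definition of $X$ fails. Consequently, Proposition \ref{exist} cannot be applied to $\tilde\psi$ as stated. The paper fixes this by replacing $Z_{lm}$ with $\tau_l Z_{lm}$, where $\tau_l\in C_0^\infty(\Omega_\varepsilon)$ is a smooth cutoff equal to $1$ on a large ball around $q_l$; this keeps $\tilde\psi$ in $H^s_0(\Omega_\varepsilon)$ at the price of adding the extra term $\sum_{l,m}b_{lm}\big[(-\Delta)^s+V(\varepsilon x)-pW_q^{p-1}\big](\tau_l Z_{lm})$ to the right-hand side, which is harmless since $\|\tau_l Z_{lm}-Z_{lm}\|_{H^2}=O(\varepsilon^{n/2})$. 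With this modification your argument goes through.
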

\begin{proof}
Denote $q=(q_1,\ldots,q_k)$, $q_i=(q_{i1},\ldots,q_{in})$. Fix $i\in \{1,\ldots, k\}$ and $j\in \{1,\ldots, n\}$. Let us define  $q_{ij}^t:=q_i+te_j$ with $e_j$ the $j-$th element of the canonical basis, and let $$q^t:=(q_1,\ldots,q_{i-1}, q_{ij}^t,\ldots,q_k).$$
For a function $f(q)$, define
\[D^t f:=\frac{f(q^t)-f(q)}{t}.\]
By simplicity of notation, we omit the dependence on $i,j$, which are fixed.
We also set
\[\psi^t:= D^tT_{q}[g],	\qquad c_{\ell m}^t:=\alpha_\ell^{-1}\int_{\R^n}\psi^tZ_{\ell m}\,dx,\]
with $\alpha_\ell$ given in Lemma \ref{orthogonal}. Let $\tau$ be a smooth radial function such that $\tau_i(x):=\tau(|x-q_i|)\in C_0^\infty(\Omega_\varepsilon)$ for every $i\in \{1,\ldots, k\}$. Then, proceeding like in \cite[Proof of Proposition 7.5]{DdPDV2015}, the function
\[\tilde{\psi}^t:=\psi^t-\sum\limits_{\ell,m}c_{\ell m}^t \tau_\ell Z_{\ell m},\]
satisfies 
\[(-\Delta)^s\tilde{\psi}^t+V(\varepsilon x)\tilde{\psi}^t-pW_q^{p-1}\tilde{\psi}^t=\tilde{g}+\sum\limits_{\ell, m}d_{\ell m}^tZ_{\ell m},\]
where
$$d_{\ell m}^t:= D^t c_{\ell m},\qquad \tilde{g}:=p(D^tW_q^{p-1})\phi-D^tg+\sum\limits_{\ell, m}c_{\ell m}D^t Z_{\ell m}- \sum\limits_{\ell, m}[(-\Delta)^s+V(\varepsilon x)-pW_q^{p-1}] c_{\ell m}^t \tau_\ell Z_{\ell m}.$$
Notice that $\tilde{\psi}^t\in H^s(\R^n)$, $\tilde{\psi}^t=0$ in $\R^n\setminus \Omega_\varepsilon$ and $\int_{\Omega_\varepsilon}\tilde{\psi}^t Z_{\ell m}\,dx=0$ for every $\ell\in \{1,\ldots,k\}$, $m\in\{1,\ldots,n\}$. Thus, by Lemma \ref{existauxil},
$$\|\tilde{\psi}^t\|_*\leq C\|\tilde{g}\|_*.$$
Using the decay properties of $w$ and Lemma \ref{Zij}, it is easy to check that
\begin{equation*}\begin{split}
|D^tW_q^{p-1}(x)|&=\bigg|\frac{1}{t}\int_0^t\frac{d}{d\eta}\bigg(\sum_{\ell\neq i}w_\ell(x)+w_{\lambda_i}(x-q_i-\eta e_j)\bigg)^{p-1}\,d\eta\bigg|\\
&\leq \frac{p-1}{t}\int_0^t\bigg|\sum_{\ell\neq i}w_\ell(x)+w_{\lambda_i}(x-q_i-\eta e_j)\bigg|^{p-2}|\nabla w_{\lambda_i}(x-q_i-\eta e_j)\bigg|\,d\eta \leq C.
\end{split}\end{equation*}
Hence, by Lemmata \ref{graZij}, \ref{cij} and \ref{priestimate}, we conclude
$$\|\tilde{\psi}^t\|_*\leq C(\|g\|_*+\|D^tg\|_*),$$
for some $C>0$ independent of $q$ and, as a consequence,
$$\|\psi^t\|_*\leq C(\|g\|_*+\|D^tg\|_*).$$
Sending $t\to0$ we get \eqref{eq:derT}. 

The fact that the map $q\mapsto T_q$ is continuously differentiable in $\Xi_\eta$ follows by an application of the implicit function theorem together with the previous estimates.
\end{proof}

\section{The non linear projected problem}
\label{sec:nonlinear}
In this section, we focus on the non linear projected problem
\begin{equation}\label{projectedp}
\left\{\begin{array} {ll}
 (-\Delta)^s\phi+V(\varepsilon x)\phi-pW_q^{p-1}\phi=E(\phi)+N(\phi)+\sum\limits_{i=1}^k\sum\limits_{j=1}^n c_{ij}Z_{ij} & \mbox{in }\; \Omega_\varepsilon,\\
 \phi=0 & \mbox{in }\; \mathbb{R}^{n}\setminus\Omega_\varepsilon,\\
 \int_{\Omega_\varepsilon} \phi Z_{ij}=0 & i=1,\ldots,k,\;\,j=1,\ldots, n,
\end{array}\right.
\end{equation}
where the functions $E(\phi),N(\phi)$ are defined in \eqref{E} and \eqref{N}.
The main result of this section is the following:
\begin{proposition} \label{Phi}
Let $q\in \Xi_\eta$. If $\varepsilon$ is small enough, there exists a unique solution $\phi\in H^s(\mathbb{R}^n)$ to the equation \eqref{projectedp},  for certain coefficients $c_{ij}$, and a positive constant $C_0$ such that
\[\|\phi\|_*\leq C_0 \big(\varepsilon+\eta^{\mu-n-2s}\big),\]
with $\eta$ and $\mu$ specified in \eqref{Xi} and \eqref{mu} respectively.
\end{proposition}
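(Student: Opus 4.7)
The plan is to recast the projected nonlinear problem \eqref{projectedp} as a fixed-point equation in the space $Y$ defined in \eqref{defY} and solve it via a contraction mapping argument in a ball of radius $C_0(\varepsilon + \eta^{\mu-n-2s})$. Concretely, using the linear operator $T_q$ introduced in \eqref{defTg}, problem \eqref{projectedp} is equivalent to
\[
\phi = T_q\big[E(\phi) + N(\phi)\big] =: \mathcal{M}(\phi),
\]
and by Proposition \ref{exist}, $\|\mathcal{M}(\phi)\|_* \leq C(\|E(\phi)\|_* + \|N(\phi)\|_*)$. The proof therefore reduces to estimating $E$ and $N$ in the $\|\cdot\|_*$-norm so that $\mathcal{M}$ maps the ball $B_\rho := \{\phi \in X : \|\phi\|_* \leq \rho\}$ into itself and is a contraction, for $\rho := C_0(\varepsilon + \eta^{\mu-n-2s})$ with $C_0$ large enough.

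The first main step is to bound $\|E(0)\|_*$, which is the size of the full error at $\phi = 0$ and which dictates the radius $\rho$. Here $E(0)$ decomposes into three independent contributions (see \eqref{E}): the boundary correction $U_q^p - W_q^p$, the potential deviation $\sum_i(\lambda_i - V(\varepsilon x))\bar{u}_i$, and the interaction term $(\sum_i w_i)^p - \sum_i w_i^p$. For the boundary correction I would use expression \eqref{expressionUq} together with Lemma \ref{expressionui} and Propositions \ref{prop:boundPi}--\ref{prop:boundLambda}, noting that $d \gtrsim 1/\varepsilon$, which gives a bound of order $\varepsilon^{p(n+2s)}$ or better (in particular, negligible compared with $\varepsilon$). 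The potential term uses $|V(\varepsilon x) - \lambda_i| \leq C\varepsilon|x-q_i|$ together with the decay \eqref{behavwi}: dividing by $\rho_q$ one needs $(1+|x-q_i|)^{\mu - (n+2s-1)}$ to be bounded, which holds since $\mu < (n+2s)/2$, and yields the $O(\varepsilon)$ contribution. For the interaction term, localizing near each $q_\ell$ and Taylor expanding gives a pointwise bound essentially of the form $p w_\ell^{p-1} \sum_{i \neq \ell} w_i$; the worst region is $|x-q_\ell|\sim |q_\ell - q_i|$, and after dividing by $\rho_q$ one obtains a bound $O(\eta^{\mu - n - 2s})$, as in \cite{DdPDV2015, ARS2021}.

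The second step is to estimate the nonlinear defect. For $N(\phi)$, the algebraic inequality
\[
\big|(a+b)^p - a^p - p a^{p-1} b\big| \leq C\,\big(|a|^{p-2}|b|^2 + |b|^p\big)
\]
(with the obvious modification when $p<2$) combined with the boundedness of $W_q$ gives $\|N(\phi)\|_* \leq C(\|\phi\|_*^{\min(2,p)} + \|\phi\|_*^p)$, which is superlinear in $\rho$ and hence negligible. For the Lipschitz dependence, $E(\phi) - E(\psi)$ reduces to $(U_q + \phi)^p - (W_q+\phi)^p - (U_q+\psi)^p + (W_q+\psi)^p$, and standard difference estimates show this is bounded by a small-in-$\varepsilon$ factor times $\|\phi-\psi\|_*$; similarly $\|N(\phi) - N(\psi)\|_* \leq C(\|\phi\|_* + \|\psi\|_*)^{\min(1,p-1)} \|\phi - \psi\|_*$. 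Putting these estimates together, for $\rho = C_0(\varepsilon + \eta^{\mu-n-2s})$ with $C_0$ sufficiently large, $\mathcal{M}$ is a contraction of $B_\rho$ into itself, whence Banach's fixed point theorem produces the unique $\phi$.

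The main obstacle I anticipate is the sharp control of the interaction error in the $\|\cdot\|_*$-norm: one must establish that $\big\|(\sum_i w_i)^p - \sum_i w_i^p\big\|_* = O(\eta^{\mu - n - 2s})$ uniformly in $q \in \Xi_\eta$, which requires distinguishing between the regions close to and far from each $q_i$, using the polynomial decay \eqref{behavwi} in a subtle way (the slow decay being precisely the feature that makes this estimate much more delicate than in the local case). Once this is settled, everything else follows from the linear theory of Section \ref{Section 5} and standard contraction arguments.
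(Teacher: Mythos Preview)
Your proposal is correct and follows essentially the same route as the paper: recast \eqref{projectedp} as a fixed-point equation $\phi = T_q[E(\phi)+N(\phi)]$, estimate $\|E\|_* = O(\varepsilon + \eta^{\mu-n-2s})$ and $\|N(\phi)\|_* = O(\|\phi\|_*^{\min(2,p)})$, and close by contraction on a ball of radius $C_0(\varepsilon+\eta^{\mu-n-2s})$. The only cosmetic differences are that the paper bounds $\|E(\phi)\|_*$ directly for $\|\phi\|_*\le 1$ (Lemma~\ref{estimateE}) rather than first bounding $E(0)$ and then adding a Lipschitz term, and it handles the boundary correction via the maximum-principle estimate $|U_q-W_q|\le C\varepsilon^{n+2s}$ (Lemma~\ref{limit}) instead of tracking $\Lambda_i$ and $\Pi_i$ separately---note in this connection that Proposition~\ref{prop:boundPi} only controls $\Pi_i$ on $B_{d/8}(q_i)$, so your route needs the supplementary observation $\Pi_i\le w_i\le C\varepsilon^{n+2s}$ outside that ball, which yields $O(\varepsilon^{n+2s})$ rather than the $O(\varepsilon^{p(n+2s)})$ you quote, but this is still amply negligible.
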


In order to prove Proposition \ref{Phi}, we need some estimates on $N(\phi)$ and $E(\phi)$. We will use the next auxiliary lemma:
\begin{lemma} \label{ineq}
Denote $t^r:=|t|^{r-1}t$, $t\in \R$. For any $a, b\in \R$,  $r>0$, there exists a positive constant, depending only on $r$, such that
 \begin{equation}\label{point1}
	 |(a+b)^r-a^r|\leq 
	 \begin{cases}C|a|^{r-1} |b|,\qquad \mbox{if }|b|\leq|a|,\\
	 C|b|^r,\qquad \mbox{if }|a|\leq|b|.
	 \end{cases}
	\end{equation}	
Furthermore, if $r>1$,
 \begin{equation*}\label{point2}
	 |(a+b)^r-a^r-ra^{r-1}b|\leq 
	 \begin{cases}C|a|^{r-2} |b|^2,\qquad \mbox{if }|b|\leq|a|,\\
	 C|b|^r,\qquad \mbox{if }|a|\leq|b|.
	 \end{cases}
	\end{equation*}	
\end{lemma}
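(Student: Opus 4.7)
The plan is to prove both bounds by a case split based on the relative sizes of $|a|$ and $|b|$: the regime $|a|\le|b|$ is handled by direct triangle inequalities, while the regime $|b|\le|a|$ is treated via a Taylor expansion of the odd power $g(x):=|x|^{r-1}x$.

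\textbf{Case $|a|\le|b|$.} All terms are of order $|b|^r$. Indeed, $|(a+b)^r|=|a+b|^r\le 2^r|b|^r$ and $|a^r|\le|b|^r$, and for the second inequality (where $r>1$) also $r|a|^{r-1}|b|\le r|b|^r$. The triangle inequality then gives both estimates with constant $C|b|^r$.

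\textbf{Case $|b|\le|a|$.} The function $g$ satisfies $g'(x)=r|x|^{r-1}$ on all of $\R$, and for $r>1$ and $x\neq 0$ it is $C^2$ with $|g''(x)|\le r(r-1)|x|^{r-2}$. Setting $G(t):=g(a+tb)$, the fundamental theorem of calculus yields
\[
(a+b)^r-a^r \;=\; rb\int_0^1|a+tb|^{r-1}\,dt,
\]
and the second-order Taylor formula with integral remainder yields, when $r>1$,
\[
\big|(a+b)^r-a^r-ra^{r-1}b\big| \;\le\; r(r-1)b^2\int_0^1(1-t)|a+tb|^{r-2}\,dt.
\]
I would then split this regime further into the subcases $|b|\le|a|/2$ and $|a|/2<|b|\le|a|$. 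In the former, the segment $[a,a+b]$ is bounded away from the origin by $|a|/2$, so $|a+tb|^{r-1}\le C|a|^{r-1}$ and $|a+tb|^{r-2}\le C|a|^{r-2}$ uniformly in $t$; evaluating the two integrals then delivers exactly the bounds $C|a|^{r-1}|b|$ and $C|a|^{r-2}|b|^2$. In the latter, the quantities $|a|$, $|b|$ and $|a+b|$ are all comparable, so the coarse bound $C|b|^r$ from the first case can be rewritten as $C|a|^{r-1}|b|$ or $C|a|^{r-2}|b|^2$ by absorbing powers of $|b|/|a|\sim 1$.

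\textbf{Main obstacle.} The only delicate point is the singularity of $|x|^{r-1}$ for $r<1$, or of $|x|^{r-2}$ for $1<r<2$, at $x=0$, which could make the Taylor remainder integrals blow up whenever the segment $[a,a+b]$ approaches the origin. The subcase split within $|b|\le|a|$ is engineered precisely to sidestep this: when $|b|\le|a|/2$ the segment stays uniformly away from zero, and when $|b|>|a|/2$ the Taylor representation is bypassed in favor of the coarse triangle-inequality estimate of the first case, which already has the correct homogeneity.
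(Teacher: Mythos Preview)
Your argument is correct. The approach differs from the paper's in its organization, though the underlying ideas are the same. For \eqref{point1} the paper does not use the integral representation: it factors out $|a|^r$, sets $x=b/a$, and studies the one-variable function $f(x)=(1+x)^r-1$, handling $r\ge1$ by the mean value theorem and $0<r<1$ by a direct case analysis on $|x|\ge1$ versus $|x|<1$. For the second inequality the paper does something slicker than your second-order Taylor formula: it writes
\[
(a+b)^r-a^r-ra^{r-1}b \;=\; r\int_0^1\big[(a+\sigma b)^{r-1}-a^{r-1}\big]b\,d\sigma
\]
and then applies \eqref{point1} (with exponent $r-1$) to the integrand, which immediately yields both cases. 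This bootstrap avoids the need to control $|a+tb|^{r-2}$ and hence your additional subcase split $|b|\le|a|/2$ versus $|a|/2<|b|\le|a|$. Your route, on the other hand, is more uniform---the same template treats both inequalities---and makes the role of the singularity at the origin completely explicit, which is pedagogically useful.
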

\begin{proof}
Let us first prove \eqref{point1}. If $a=0$ the inequalities trivially follow. 
For $a\neq0,$ we can write
\begin{align*} |(a+b)^r-a^r|&=|a|^r\bigg|\bigg(1+\frac{b}{a}\bigg)^r-1\bigg|.
\end{align*}
If $r\geq 1$, by the mean value theorem,
\begin{align*} |a|^r\left|\left(1+\frac{b}{a}\right)^r-1\right|\leq r|a|^r\bigg(1+\bigg|\frac{b}{a}\bigg|\bigg)^{r-1}\left|\frac{b}{a}\right|\leq 2^{r-1}r|a|^{r}\bigg(1+\bigg|\frac{b}{a}\bigg|^{r-1}\bigg)\left|\frac{b}{a}\right|,
\end{align*}
and the result follows.

Suppose $0<r<1$ and define $f(x):=\left(1+x\right)^r-1$. 

\noindent If $|x|\geq1$, then
\[|f(x)|=|(1+x)^r-1|\leq 1+(1+|x|)^r\leq 1+2^r|x|^r,\]
and hence
$$|a|^r\bigg|\bigg(1+\frac{b}{a}\bigg)^r-1\bigg|\leq |a|^r+2^r|b|^r\leq (1+2^r)|b|^r\qquad \mbox{whenever }|a|\leq |b|.$$
If $0\leq x<1$, since $r-1<0$, by the mean value theorem $|f(x)|\leq r|x|$.
If $-1<x<0,$\begin{gather*}
              f(x)<0, \quad f'(x)=r(1+x)^{r-1}\geq0,\quad f''(x)=r(r-1)(1+x)^{r-2}\leq0.
            \end{gather*}
Then $f$ is negative, increasing and concave and $f(-1)=-1$, and hence $|f(x)|\leq |x|$.
Consequently,
\begin{equation*}\label{ineq2}
  |a|^r\bigg|\bigg(1+\frac{b}{a}\bigg)^r-1\bigg|=|a|^r\bigg|f\bigg(\frac{b}{a}\bigg)\bigg|\leq |a|^{r-1}|b|\qquad \mbox{whenever }|b|\leq |a|.
\end{equation*}
Inequality \eqref{point2} follows observing that
 $$ (a+b)^r-a^r-ra^{r-1}b=r\int_0^1\left[(a+\sigma b)^{r-1}-a^{r-1}\right]b\,d\sigma,$$
 and applying \eqref{point1}.
\end{proof}
We can now estimate the non linear term. 
\begin{lemma} \label{estimateN} Let $\phi\in Y$ and $q\in\Xi_\eta$. Then, there exists a constant $C>0$ independent of $q$ such that
\[\|N(\phi)\|_*\leq C(\|\phi\|_*^2+\|\phi\|_*^p).\]
\end{lemma}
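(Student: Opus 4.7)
My plan is to obtain the bound pointwise and then take the supremum weighted by $\rho_q^{-1}$. The core idea is to apply Lemma \ref{ineq} with the choice $a=W_q(x)$, $b=\phi(x)$, $r=p$, which directly targets the expression defining $N(\phi)$ in \eqref{N}, namely
\[
N(\phi)(x)=(W_q(x)+\phi(x))^{p}-W_q(x)^{p}-pW_q(x)^{p-1}\phi(x).
\]
Since $p>1$, this gives us
\[
|N(\phi)(x)|\leq \begin{cases} C\,W_q(x)^{p-2}\,|\phi(x)|^{2}, & \text{if }|\phi(x)|\leq W_q(x),\\[2pt] C\,|\phi(x)|^{p}, & \text{if }W_q(x)\leq |\phi(x)|.\end{cases}
\]

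The goal is to show that each of these estimates is dominated by $C(\|\phi\|_*^{2}+\|\phi\|_*^{p})\rho_q(x)$. Two structural observations make this routine once set up: first, $W_q=\sum_i w_i$ is uniformly bounded on $\mathbb{R}^n$ (each $w_i$ is smooth and decays polynomially by \eqref{behavwi}), and second, $\rho_q$ is uniformly bounded as well (each summand in \eqref{rho} is bounded by $1$, so $\rho_q\le k$). In the region $\{W_q\leq|\phi|\}$ I would simply use $|\phi(x)|\leq \|\phi\|_*\rho_q(x)$ together with the bound on $\rho_q$ to absorb the extra powers:
\[
|N(\phi)(x)|\leq C\|\phi\|_*^{p}\rho_q(x)^{p}\leq C\|\phi\|_*^{p}\rho_q(x).
\]

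The region $\{|\phi|\leq W_q\}$ requires distinguishing cases according to the size of $p$. When $p\ge 2$, the factor $W_q^{p-2}$ is directly bounded and I can write
\[
|N(\phi)(x)|\leq C\,W_q(x)^{p-2}\|\phi\|_*^{2}\rho_q(x)^{2}\leq C\|\phi\|_*^{2}\rho_q(x),
\]
again using boundedness of $W_q$ and of $\rho_q$. This is the step I expect to be the main (mild) obstacle, because when $1<p<2$ the quantity $W_q^{p-2}$ blows up where $W_q$ is small. The rescue is to exploit the defining inequality of the region: since $|\phi|\le W_q$, we have $W_q^{p-2}\le |\phi|^{p-2}$, and therefore
\[
W_q(x)^{p-2}|\phi(x)|^{2}\leq |\phi(x)|^{p}\leq \|\phi\|_*^{p}\rho_q(x)^{p}\leq C\|\phi\|_*^{p}\rho_q(x).
\]

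Combining the two regions gives $|N(\phi)(x)|\leq C(\|\phi\|_*^{2}+\|\phi\|_*^{p})\rho_q(x)$ uniformly in $x\in\mathbb{R}^n$, and dividing by $\rho_q(x)$ and taking the $L^\infty$ norm yields the claim. The constant $C$ depends only on $p$, on $\|W_q\|_{L^\infty}$ and on $\|\rho_q\|_{L^\infty}$, all of which are uniform in $q\in\Xi_\eta$, so independence from $q$ is automatic.
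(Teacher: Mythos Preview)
Your proof is correct and follows essentially the same approach as the paper: apply Lemma \ref{ineq} with $a=W_q$, $b=\phi$, $r=p$, and then use the uniform boundedness of $W_q$ and $\rho_q$ to absorb the extra powers. The paper compresses your two-region analysis into the single pointwise bound $|N(\phi)|\leq C(|\phi|^{2}+|\phi|^{p})$ without explicitly writing out the $1<p<2$ step, but the underlying reasoning is identical.
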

\begin{proof} By Lemma \ref{ineq} and the fact that $\rho_q$ is bounded (independently of $q$),
\begin{align*}
 \rho_q^{-1}|N(\phi)|\leq C \rho_q^{-1}(|\phi|^{2}+|\phi|^p)\leq C ( \rho_q\rho_q^{-2}|\phi|^{2}+\rho_q^{p-1} \rho_q^{-p}|\phi|^p)\leq C(\|\phi\|_*^{2}+\|\phi\|_*^p),
 \end{align*}
 and the estimate follows.
\end{proof}

\begin{lemma} \label{limit}Let $q\in\Xi_\eta$. There exists a positive constant $C$, independent of $q$, such that
\begin{equation}\label{eq:estvi}
|\bar{u}_i-w_i|\leq C \varepsilon^{n+2s}\qquad \mbox{ for every }i=1,\ldots, k.
\end{equation}
Consequently,
\[|U_q-W_q|\leq C \varepsilon^{n+2s},\]
with $U_q$ and $W_q$ defined in \eqref{Wq}.
\end{lemma}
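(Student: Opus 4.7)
The plan is to exploit the explicit decomposition $\bar{u}_i = w_i - \Lambda_i - \Pi_i$ on $\Omega_\varepsilon$ supplied by Lemma \ref{expressionui}, and to estimate the two correction terms $\Lambda_i$ and $\Pi_i$ uniformly by $C\varepsilon^{n+2s}$. The factor $\varepsilon^{n+2s}$ arises naturally from the bound $d\geq \delta_*/\varepsilon$ together with the polynomial decay \eqref{behavwi} and the bounds on the regular part of the Green function derived in Section 2.

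First I would treat the trivial region $x\in\mathbb{R}^n\setminus\Omega_\varepsilon$, where $\bar{u}_i(x)=0$: since $q_i\in\Omega_\varepsilon^{\delta_*}$ one has $|x-q_i|\geq \delta_*/\varepsilon$, which is $\geq 1$ for $\varepsilon$ small, so \eqref{behavwi} yields directly $w_i(x)\leq C\varepsilon^{n+2s}$. Next, for $x\in\Omega_\varepsilon$, Proposition \ref{prop:boundLambda} gives
\[
\Lambda_i(x)\leq \frac{C}{d^{p(n+2s)}}\leq C\varepsilon^{p(n+2s)}\leq C\varepsilon^{n+2s},
\]
using $p>1$.

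The only point requiring a small additional idea is $\Pi_i$. Proposition \ref{prop:boundPi} is a local bound, controlling $\Pi_i$ only on $B_{d/8}(q_i)$, where it yields $\Pi_i(x)\leq C/d^{n+4s}\leq C\varepsilon^{n+4s}\leq C\varepsilon^{n+2s}$. For $x\in\Omega_\varepsilon\setminus B_{d/8}(q_i)$ I would instead invoke the comparison $H_{\lambda_i}(x,y)\leq \Gamma_{\lambda_i}(x-y)$ (which follows from the positivity of $G_{\lambda_i}$ in \eqref{Greenp}) to write
\[
\Pi_i(x)=\int_{\Omega_\varepsilon}w_i^p(y)H_{\lambda_i}(x,y)\,dy\leq \int_{\mathbb{R}^n}w_i^p(y)\Gamma_{\lambda_i}(x-y)\,dy=w_i(x),
\]
and then the decay \eqref{behavwi}, combined with $|x-q_i|\geq d/8\geq \delta_*/(8\varepsilon)\geq 1$, yields $\Pi_i(x)\leq C\varepsilon^{n+2s}$.

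Putting these estimates together gives $|w_i-\bar{u}_i|\leq C\varepsilon^{n+2s}$ uniformly in $\mathbb{R}^n$, and the statement for $U_q-W_q$ follows immediately from the triangle inequality applied to definition \eqref{Wq}. There is no genuine obstacle; the one mildly subtle observation is the pointwise comparison $\Pi_i\leq w_i$ which bridges the gap left by the purely local reach of Proposition \ref{prop:boundPi}.
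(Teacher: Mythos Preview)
Your argument is correct, but it differs from the paper's. The paper does not go through the decomposition $\bar u_i=w_i-\Lambda_i-\Pi_i$ at all: it simply sets $v_i:=\bar u_i-w_i$, observes from \eqref{dirichlet2} and \eqref{eqwlambda} that
\[
(-\Delta)^s v_i+\lambda_i v_i=0\ \text{in }\Omega_\varepsilon,\qquad v_i=-w_i\ \text{in }\mathbb{R}^n\setminus\Omega_\varepsilon,
\]
and applies the maximum principle once, using that the exterior datum satisfies $|w_i|\leq C\varepsilon^{n+2s}$ by \eqref{behavwi}. This is a one-line argument with no case splitting and no need to invoke Propositions \ref{prop:boundLambda}, \ref{prop:boundPi} or the comparison $\Pi_i\leq w_i$.

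Your route, by contrast, unpacks the two correction terms separately and bounds each. It is longer but perfectly valid, and in fact extracts slightly finer information (you see that $\Lambda_i=O(\varepsilon^{p(n+2s)})$ and that $\Pi_i=O(\varepsilon^{n+4s})$ near $q_i$), although this refinement is not used anywhere in the paper. The one point you flagged as ``mildly subtle'', the inequality $\Pi_i\leq w_i$, is genuinely needed in your approach and is implicit in \eqref{estexpressui}; the paper's maximum-principle argument sidesteps it entirely.
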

\begin{proof}
Let $v_i:=\bar{u}_i-w_i$. Since $\bar{u}_i$ and $w_i$ satisfy \eqref{dirichlet2}  and \eqref{eqwlambda} respectively, then $v_i$ solves 
\begin{equation}\label{eqvi}
\left\{\begin{array} {ll}
 (-\Delta)^sv_i+\lambda_iv_i=0 & \mbox{in }\; \Omega_\varepsilon,\\
 v_i= -w_i & \mbox{in }\; \mathbb{R}^{n}\setminus\Omega_\varepsilon.
\end{array}\right.
\end{equation}
By \eqref{behavwi}, 
\begin{equation*}
|v_i|=|w_i|\leq \frac{C}{|x-q_i|^{n+2s}}\leq C\varepsilon^{n+2s}\qquad \mbox{ in }\mathbb{R}^{n}\setminus\Omega_\varepsilon,
\end{equation*}
and thus, by the maximum principle, $|v_i|\leq C\varepsilon^{n+2s}$ in the whole space and \eqref{eq:estvi} follows.
Hence
$$|U_q-W_q|=\bigg|\sum\limits_{i=1}^{k}v_i\bigg|\leq\sum\limits_{i=1}^k|v_i|\leq C\varepsilon^{n+2s}.$$
\end{proof}

\begin{lemma} \label{limit1} Let $q\in \Xi_\eta$. There exists a positive constant $C$, independent of $q$, such that
\[\left|\frac{\partial \bar{u}_i}{\partial q_{ij}}-\frac{\partial w_i}{\partial q_{ij}}\right|\leq C \varepsilon^{\nu_1},\qquad \mbox{for every }\;i\in\{1,\ldots, k\},\;j\in\{1,\ldots,n\},\]
where $\nu_1:=\min\{n+2s+1,p(n+2s)\}$. As a consequence,
\[\left|\frac{\partial U_q}{\partial q_{ij}}-\frac{\partial W_q}{\partial q_{ij}}\right|\leq C \varepsilon^{\nu_1},\qquad \mbox{ for every }\; i\in\{1,\ldots, k\},\;j\in\{1,\ldots,n\}.\]
\end{lemma}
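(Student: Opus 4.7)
The strategy is to differentiate the linear problem \eqref{eqvi} satisfied by $v_i:=\bar{u}_i-w_i$ with respect to $q_{ij}$, and then apply a fractional maximum principle together with the decay estimates already at our disposal.

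First, observe that both the potential constant $\lambda_i=V(\varepsilon q_i)$ and the exterior data $-w_i$ in \eqref{eqvi} depend on $q_{ij}$. Differentiating formally, the function $\psi_i:=\partial v_i/\partial q_{ij}=\partial \bar{u}_i/\partial q_{ij}-\partial w_i/\partial q_{ij}$ solves
\begin{equation*}
\left\{\begin{array}{ll}
(-\Delta)^s\psi_i+\lambda_i\psi_i=-\dfrac{\partial\lambda_i}{\partial q_{ij}}\,v_i & \mbox{in }\Omega_\varepsilon,\\[0.2cm]
\psi_i=-\dfrac{\partial w_i}{\partial q_{ij}} & \mbox{in }\R^n\setminus\Omega_\varepsilon.
\end{array}\right.
\end{equation*}
The plan is to show that both the interior source and the exterior datum are uniformly $O(\varepsilon^{\nu_1})$, and then invoke the maximum principle for $(-\Delta)^s+\lambda_i$ (recalling that $\lambda_i\geq \inf_\Omega V>0$ by \eqref{eq:infBoundV} and Remark \ref{remark2}) to deduce $\|\psi_i\|_{L^\infty(\R^n)}\leq C\varepsilon^{\nu_1}$.

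For the interior source, Remark \ref{unifV} together with $\partial\lambda_i/\partial q_{ij}=\varepsilon\,\partial V(\xi_i)/\partial\xi_{ij}$ gives $|\partial\lambda_i/\partial q_{ij}|\leq C\varepsilon$, while Lemma \ref{limit} yields $|v_i|\leq C\varepsilon^{n+2s}$. Since $\nu_1\leq n+2s+1$, the product is bounded by $C\varepsilon^{n+2s+1}\leq C\varepsilon^{\nu_1}$. For the exterior datum, Corollary \ref{cor:derw_qij} gives
\[\left|\frac{\partial w_i}{\partial q_{ij}}(x)\right|\leq |Z_{ij}(x)|+\frac{C\varepsilon}{(1+|x-q_i|)^{\nu_1-1}}.\]
For $x\in\R^n\setminus\Omega_\varepsilon$ one has $|x-q_i|\geq \mbox{dist}(q_i,\partial\Omega_\varepsilon)\geq\delta_*/\varepsilon$, so Lemma \ref{Zij} implies $|Z_{ij}(x)|\leq C|x-q_i|^{-\nu_1}\leq C\varepsilon^{\nu_1}$, and the error term is at most $C\varepsilon\cdot\varepsilon^{\nu_1-1}=C\varepsilon^{\nu_1}$. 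Altogether the boundary data are $O(\varepsilon^{\nu_1})$ uniformly.

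The maximum principle for $(-\Delta)^s+\lambda_i$ (applied to $\pm\psi_i$ compared with the constant barrier $C\varepsilon^{\nu_1}$, noting $\lambda_i$ is uniformly bounded from below) now yields the first estimate. The statement for $U_q-W_q=\sum_i v_i$ follows by summing over the $k$ peaks. The only genuinely delicate point, though mild, is that differentiating \eqref{eqvi} produces the new source $-(\partial_{q_{ij}}\lambda_i)v_i$ coming from the non-constant potential constant $\lambda_i$; its smallness is exactly what Lemma \ref{limit} was prepared to provide, so the argument goes through without further subtleties.
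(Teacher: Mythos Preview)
Your proof is correct and follows essentially the same approach as the paper: differentiate the problem \eqref{eqvi} for $v_i$ with respect to $q_{ij}$, bound the resulting source term $-(\partial_{q_{ij}}\lambda_i)v_i$ using Lemma \ref{limit} and the exterior datum using Corollary \ref{cor:derw_qij} together with Lemma \ref{Zij}, then conclude via the maximum principle. The only cosmetic difference is that the paper decomposes $\psi_i=f+h$, treating the homogeneous problem with nonzero exterior data ($f$) and the inhomogeneous problem with zero exterior data ($h$) separately, whereas you apply a single constant barrier to the full problem; since $\lambda_i$ is uniformly bounded below, both arguments are equivalent.
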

\begin{proof} Let $v_i=\bar{u}_i-w_i,$ which satisfies \eqref{eqvi}. Using Corollary \ref{cor:derw_qij}, it can be seen that $\frac{\partial v_i}{\partial q_{ij}}$ satisfies
\begin{equation*}
\left\{\begin{array} {ll}
 \displaystyle (-\Delta)^s\frac{\partial v_i}{\partial q_{ij}}+\lambda_i\frac{\partial v_i}{\partial q_{ij}}+\varepsilon\frac{\partial V(\xi_i)}{\partial \xi_{ij}}v_i=0 & \mbox{in }\; \Omega_\varepsilon,\\
\displaystyle \frac{\partial v_i}{\partial q_{ij}}= \frac{\partial w_i}{\partial x_j}+O\bigg(\frac{\varepsilon }{(1+|x-q_i|)^{\nu_1-1}}\bigg) & \mbox{in }\; \mathbb{R}^{n}\setminus\Omega_\varepsilon.
\end{array}\right.
\end{equation*}
Therefore, by Lemma \ref{Zij},
\begin{equation}\label{eq:esti_dervi}
\left|\frac{\partial v_i}{\partial q_{ij}}\right|\leq C\bigg(\varepsilon^{\nu_1}+O\bigg(\frac{\varepsilon }{(1+|x-q_i|)^{\nu_1-1}}\bigg)\bigg)\qquad \mbox{in }\; \mathbb{R}^{n}\setminus\Omega_\varepsilon.
\end{equation}
Let us write $\dfrac{\partial v_i}{\partial q_{ij}}=f+h$ where $f$ and $h$ solve
 \begin{equation*}
\left\{\begin{array} {ll}
 (-\Delta)^s f+\lambda_i f=0 & \mbox{in }\; \Omega_\varepsilon,\\
\displaystyle f= \dfrac{\partial w_i}{\partial x_j}+O\bigg(\frac{\varepsilon}{(1+|x-q_i|)^{\nu_1-1}}\bigg) & \mbox{in }\; \mathbb{R}^{n}\setminus\Omega_\varepsilon,
\end{array}\right.
\qquad 
\left\{\begin{array} {ll}
 (-\Delta)^s h+\lambda_i h=-\varepsilon\dfrac{\partial V(\xi_i)}{\partial \xi_{ij}}v_i & \mbox{in }\; \Omega_\varepsilon,\\
h= 0 & \mbox{in }\; \mathbb{R}^{n}\setminus\Omega_\varepsilon.
\end{array}\right.
\end{equation*}
Notice that 
$$|x-q_i|\geq d\geq \frac{\delta_*}{\varepsilon}\quad \mbox{ for every }x\in \R^n\setminus \Omega_\varepsilon,$$  with $d$ defined in \eqref{defD}. Hence, using Lemma \ref{Zij} and the maximum principle,
\[\left|f \right|\leq C\varepsilon^{\nu_1}\qquad \mbox{in }\; \mathbb{R}^{n}.\]
Furthermore, by standard elliptic regularity estimates together with \eqref{eq:estvi}, 
\[\left\|h\right\|_{L^{\infty}(\Omega_\varepsilon)}\leq C \sup\limits_{\Omega_\varepsilon}\bigg|\varepsilon\frac{\partial V(\xi_i)}{\partial \xi_{ij}}v_i\bigg|\leq C \varepsilon^{n+2s+1}|\nabla V(\xi_i)|.\]
Hence, applying Remark \ref{unifV}, \eqref{eq:esti_dervi} holds in $\R^n$ and
\[\left|\frac{\partial U_q}{\partial q_{ij}}-\frac{\partial W_q}{\partial q_{ij}}\right|=\left|\frac{\partial v_i}{\partial q_{ij}}\right|\leq C\varepsilon^{\nu_1}.\]
\end{proof}

Let us estimate the error $E$, given in \eqref{E}, for small functions in the space $Y$ (recall its definition in \eqref{defY}).

\begin{lemma} \label{estimateE}
Let $\phi\in Y$ with $\|\phi\|_*\leq1$ and $q\in\Xi_\eta$. Then there exists $C>0$, independent of $q$, such that
\[\|E(\phi)\|_*\leq C\big(\varepsilon+\eta^{\mu-n-2s}\big),\]
with $\eta$ and $\mu$ given in \eqref{Xi} and \eqref{mu} respectively.
\end{lemma}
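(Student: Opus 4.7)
The plan is to decompose $E(\phi)$ into three pieces reflecting the three sources of error identified in the introduction, and to bound each in the $\|\cdot\|_*$ norm separately. Write
\begin{equation*}
E(\phi)=E_1(\phi)+E_2+E_3,
\end{equation*}
where
\begin{align*}
E_1(\phi)&:=(U_q+\phi)^{p}-(W_q+\phi)^{p}\qquad\text{(boundary correction)},\\
E_2&:=\sum_{i=1}^{k}(\lambda_i-V(\varepsilon x))\bar{u}_i\qquad\text{(non-constant potential)},\\
E_3&:=\Big(\sum_{i=1}^{k}w_i\Big)^{p}-\sum_{i=1}^{k}w_i^{p}\qquad\text{(interaction among peaks)}.
\end{align*}
For each term the task is the same: bound pointwise $|E_j|$ by a tail that is controlled by $\rho_q$, so that $\rho_q^{-1}|E_j|$ is uniformly small.

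For $E_1$, I would apply Lemma \ref{ineq} with $a=W_q+\phi$ and $b=U_q-W_q$. Since by Lemma \ref{limit} one has $|U_q-W_q|\le C\varepsilon^{n+2s}$, and since $W_q$ and $\phi$ are uniformly bounded (using $\|\phi\|_*\le 1$ and the boundedness of $w$), the estimate reduces to $|E_1|\le C\varepsilon^{n+2s}(|W_q+\phi|^{p-1}+\varepsilon^{(p-1)(n+2s)})$. Dividing by $\rho_q$ and using that in $\Omega_\varepsilon$ one has $\rho_q^{-1}\le C\varepsilon^{-\mu}$, together with $n+2s-\mu\ge (n+2s)/2\ge 1$ under \eqref{mu}, yields $\|E_1(\phi)\|_*\le C\varepsilon$.

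For $E_2$, the mean value theorem and $V\in C^2(\overline\Omega)$ give $|V(\varepsilon x)-\lambda_i|\le C\varepsilon|x-q_i|$ with $C$ uniform in $q\in\Xi_\eta$ by Remark \ref{unifV}. Combining Lemma \ref{expressionui} with \eqref{behavwi} and Propositions \ref{prop:boundPi}, \ref{prop:boundLambda} gives $|\bar u_i(x)|\le C(1+|x-q_i|)^{-(n+2s)}$. Therefore $|E_2|\le C\varepsilon\sum_i|x-q_i|(1+|x-q_i|)^{-(n+2s)}$, and bounding $\rho_q^{-1}(1+|x-q_i|)^{-(n+2s)}\le (1+|x-q_i|)^{\mu-n-2s}$ produces $\|E_2\|_*\le C\varepsilon$, since $\mu<(n+2s)/2\le n+2s-1$ makes $|x-q_i|(1+|x-q_i|)^{\mu-n-2s}$ uniformly bounded.

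The term $E_3$ is the main obstacle and determines the $\eta^{\mu-n-2s}$ contribution. My plan is to split $\Omega_\varepsilon=\bigcup_{i=1}^{k} \mathcal{A}_i\cup \mathcal{A}_0$, where $\mathcal{A}_i:=\{x:|x-q_i|\le \eta/2\}$ and $\mathcal{A}_0$ is the complement. On $\mathcal{A}_i$ I would apply Lemma \ref{ineq} with $a=w_i$ and $b=\sum_{j\ne i}w_j$; since $|x-q_j|\ge \eta/2$ for $j\ne i$ by the triangle inequality, \eqref{behavwi} gives $|b|\le C\eta^{-(n+2s)}$, so
\[|E_3|\le C\big(w_i^{p-1}\eta^{-(n+2s)}+\eta^{-p(n+2s)}\big)+\sum_{j\ne i}w_j^{p}.\]
Using $\rho_q^{-1}(x)\le(1+|x-q_i|)^{\mu}$ on $\mathcal{A}_i$, and controlling $w_i^{p-1}(1+|x-q_i|)^{\mu}$ by $(1+|x-q_i|)^{\mu-(p-1)(n+2s)}$ which is uniformly bounded in $x$, we obtain a contribution of order $\eta^{-(n+2s)}\le\eta^{\mu-n-2s}$. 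On $\mathcal{A}_0$, every $|x-q_i|\ge\eta/2$, so each $w_i\le C\eta^{-(n+2s)}$ and $\rho_q^{-1}\le C\eta^{\mu}$, giving the sharper factor $\eta^{\mu-p(n+2s)}$, easily absorbed. Hence $\|E_3\|_*\le C\eta^{\mu-n-2s}$.

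Adding the three bounds gives $\|E(\phi)\|_*\le C(\varepsilon+\eta^{\mu-n-2s})$, as claimed. The principal difficulty is the careful region split in $E_3$ and the verification that all the exponents arising from pairing $\rho_q^{-1}$ with powers of $w_i$ land on the correct side of $(p-1)(n+2s)-\mu>0$; this is precisely the constraint embedded in the choice \eqref{mu} of $\mu$.
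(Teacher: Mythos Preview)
Your decomposition and overall strategy match the paper's proof. The treatments of $E_1(\phi)$ and $E_2$ are correct (the paper in fact records the sharper bound $\|E_1(\phi)\|_*\le C\varepsilon^{n+2s}$ by using $\|W_q+\phi\|_*\le C$ rather than the crude $\rho_q^{-1}\le C\varepsilon^{-\mu}$, but your version suffices for the statement).

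There is, however, a gap in your handling of $E_3$. You assert that $(1+|x-q_i|)^{\mu-(p-1)(n+2s)}$ is uniformly bounded and that the inequality $(p-1)(n+2s)>\mu$ is ``precisely the constraint embedded in the choice \eqref{mu} of $\mu$''. This is false: \eqref{mu} only says $\tfrac{n}{2}<\mu<\tfrac{n+2s}{2}$, and for $p$ close to $1$ one has $(p-1)(n+2s)<\tfrac{n}{2}<\mu$, so the exponent is positive and the factor is unbounded on $\mathcal{A}_i$ (where $|x-q_i|$ ranges up to $\eta/2$).

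The fix is simple and is what the paper does: instead of pairing $\rho_q^{-1}$ with $w_i^{p-1}$, bound $w_i^{p-1}\le C$ by a constant and pair $\rho_q^{-1}$ with the \emph{far} peak. Using $\rho_q^{-1}(x)\le(1+|x-q_j|)^{\mu}$ for each $j\ne i$, on $\mathcal{A}_i$ one gets
\[
\rho_q^{-1}\,w_i^{p-1}\sum_{j\ne i}w_j\;\le\;C\sum_{j\ne i}(1+|x-q_j|)^{\mu-(n+2s)}\;\le\;C\,\eta^{\mu-n-2s},
\]
with no relation between $(p-1)(n+2s)$ and $\mu$ needed. (Alternatively, your own route can be salvaged: on $\mathcal{A}_i$ one has $(1+|x-q_i|)^{\mu-(p-1)(n+2s)}\le C\eta^{(\mu-(p-1)(n+2s))_+}$, and after multiplying by $\eta^{-(n+2s)}$ this still yields at worst $\eta^{\mu-p(n+2s)}\le\eta^{\mu-n-2s}$; but the justification you wrote is incorrect.) The paper's region split uses $\Omega_\ell=\{w_\ell\ge w_i\ \forall\, i\ne\ell\}$ rather than your balls $\mathcal{A}_i$, a purely cosmetic difference.
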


\begin{proof}
By Lemmas \ref{ineq} and \ref{limit},
\begin{align*}
  |(U_q+\phi)^p-(W_q+\phi)^p|\leq C\left(|U_q-W_q||W_q+\phi|^{p-1}+|U_q-W_q|^p\right)\leq C \left(\varepsilon^{n+2s}|W_q+\phi|^{p-1}+\varepsilon^{(n+2s)p}\right),
\end{align*}
and thus, since $\|\phi\|_*\leq 1$ and $\|W_q\|_*$ is uniformly bounded in $q$,
\begin{equation}\label{eq:E1}
\|(U_q+\phi)^p-(W_q+\phi)^p\|_*\leq C\varepsilon^{n+2s}.
\end{equation}
On the other hand, by Remark \ref{unifV},
\begin{align*}
  \sup\limits_{x\in \R^n}\Big|\rho_q(x)^{-1}\sum\limits_{i=1}^k(\lambda_i-V(\varepsilon x))\bar{u}_i\Big|&=\sup\limits_{x\in \Omega_\varepsilon}\Big|\rho_q(x)^{-1}\sum\limits_{i=1}^k(\lambda_i-V(\varepsilon x))\bar{u}_i\Big|\\
  &\leq C \sup\limits_{x\in \Omega_\varepsilon}\bigg(\rho_q(x)^{-1}\sum\limits_{i=1}^k\varepsilon |x-q_i||w_i+\varepsilon^{n+2s}|\bigg).
\end{align*}
Furthermore, 
$$\varepsilon |x-q_i|\leq \mbox{diam}(\Omega)<+\infty\qquad \mbox{for every }i=1,\ldots, k\mbox{ and }x\in\Omega_\varepsilon.$$
Combining this estimate with \eqref{behavwi} we get
\begin{align*}
  \rho_q(x)^{-1}\sum\limits_{i=1}^k&\varepsilon|x-q_i||w_i+\varepsilon^{n+2s}| \leq C\sum\limits_{i=1}^k\varepsilon|x-q_i|\bigg(\frac{1}{(1+|x-q_i|)^{n+2s-\mu}}+\varepsilon^{n+2s-\mu}\bigg)\leq C\varepsilon,
\end{align*}
since $n+2s-\mu-1\geq 0$.
Therefore
\begin{equation}\label{eq:E2}
\Big\|\sum\limits_{i=1}^k(\lambda_i-V(\varepsilon x))\bar{u}_i\Big\|_*\leq C \varepsilon.
\end{equation}
Fix $\ell=1,\ldots, k$ and consider the subdomain 
\begin{equation}\label{Omegai}
  \Omega_\ell=\{x\in \Omega_\varepsilon: w_\ell(x)\geq w_i(x),\;\; \forall \,\ell\neq i\}\qquad \ell=1,\ldots,k.
\end{equation}
In this region,
\begin{equation}\begin{split}\label{eq:E3}
 \rho_q^{-1}\left|\bigg(\sum\limits_{i=1}^kw_i\bigg)^{p}-\sum\limits_{i=1}^kw_i^{p}\right|&\leq C \rho_q^{-1}w_\ell^{p-1}\sum\limits_{i\neq \ell}\frac{1}{|x-q_i|^{n+2s}}\leq  C\sum\limits_{i\neq \ell}\frac{1}{|x-q_i|^{n+2s-\mu}}\\
   &\leq  C\sum\limits_{i\neq \ell}\frac{1}{|q_\ell-q_i|^{n+2s-\mu}}\leq C \eta^{\mu-n-2s}.
\end{split}\end{equation}
Putting together \eqref{eq:E1}, \eqref{eq:E2} and \eqref{eq:E3} for every $i$ we conclude the result.
\end{proof}

We can already prove Proposition \ref{Phi}. To do so, we will adapt the strategy of \cite[Theorem 7.6]{DdPDV2015}.
\medskip

\noindent \textit{Proof of Proposition \ref{Phi}.} Let $T_q$ defined in \eqref{defTg}. We want to prove the existence of $\phi$ such that
\[\phi=T_q[E(\phi)+N(\phi)].\]
Define
\[K_q(\phi):=T_q[E(\phi)+N(\phi)].\]
Given $\varepsilon$ small enough and $C_0>0$ to be chosen later, we define the set
\[B:=\{\phi\in Y: \|\phi\|_*\leq C_0 \tau\},\qquad \tau:=\varepsilon+\eta^{\mu-n-2s}.\]
We claim that
\begin{equation}\label{Kq}
  K_q \, \mbox{is a contraction mapping from $B$ into $B$}.
\end{equation}
Let us see first that $K_q(\phi)\in B$ provided  $\phi\in B$. Indeed, if $\phi\in B$, applying Proposition \ref{exist}, Lemma \ref{estimateN} and Lemma \ref{estimateE} we get
 \begin{align*}
 \|K_q(\phi)\|_*& \leq C\|E(\phi)+N(\phi)\|_*\leq  C\left(\|E(\phi)\|_* +C_1(\|\phi\|_*^2+\|\phi\|_*^p) \right) \leq  C (C_2\tau+ C_1C_0^2\tau^2+  C_1C_0^p\tau^p)
 \\
  & = C_0\tau\left(\frac{C C_2}{C_0}+C C_1C_0\tau+C C_1C_0^{p-1}\tau^{p-1}\right).
   \end{align*}
Choosing
\begin{equation*}
C_0>2C C_2,\qquad 
		\tau<\tau_1=
		\begin{cases}
			\frac{1}{2CC_1(C_0+C_0^{p-1})} &\mbox{if}\,\,p\geq2,\\
			\left(\frac{1}{2CC_1(C_0+C_0^{p-1})}\right)^\frac{1}{p-1} &\mbox{if}\,\,1<p<2,
		\end{cases}
	\end{equation*}
we deduce $\|K_q(\phi)\|_*\leq C_0\tau$ and then $K_q(\phi)\in B$.

To see that the application is contractive, assume $\phi_1,\phi_2$. Then we can write
\begin{equation*}
|N(\phi_1)-N(\phi_2)|\leq |(W_q+\phi_1)^p- (W_q+\phi_2)^p-p(W_q+\phi_2)^{p-1}(\phi_1-\phi_2)|+p|(W_q+\phi_2)^{p-1}-W_q^{p-1}||\phi_1-\phi_2|,
\end{equation*}
and hence, applying Lemma \ref{ineq} to the first term in the right hand side, and \cite[Lemma 7.11]{DdPDV2015} to the second one,
\begin{equation*}
|N(\phi_1)-N(\phi_2)|\leq C(|\phi_1|^{p-1}+|\phi_2|^{p-1}+|\phi_1|+|\phi_2|)|\phi_1-\phi_2|,
\end{equation*}
with $C$ independent of $q$. Thus,
\begin{align*}
  \|N(\phi_1)-N(\phi_2)\|_* & \leq C(\|\phi_1\|_*^{p-1}+\|\phi_2\|_*^{p-1}+\|\phi_1\|_*+\|\phi_2\|_*)\|\phi_1-\phi_2\|_*\\
  &\leq C(C_0+C_0^{p-1})\tau^{\min\{1,p-1\}}\|\phi_1-\phi_2\|_*.
\end{align*}
Fix $x\in \Omega_\varepsilon$. Given $t$ in a bounded subset of $\mathbb{R},$ we consider the function
\[f(t):=(U_q(x)+t)^p-(W_q(x)+t)^p.\]
Using \cite[Lemma 7.11]{DdPDV2015},
\[|f'(t)|=p|(U_q+t)^{p-1}-(W_q+t)^{p-1}|\leq C|U_q-W_q|^{\min\{1,p-1\}},\]
and hence 
\[|E(\phi_1)-E(\phi_2)|\leq C|U_q-W_q|^{\min\{1,p-1\}}|\phi_1-\phi_2|,\]
where we have used the fact that $\phi_1,\phi_2$ are bounded since they belong to $B$.
Thanks to Lemma \ref{limit}, we thus have 
\[\|E(\phi_1)-E(\phi_2)\|_*\leq C\varepsilon^{\min\{1,p-1\}(n+2s)}\|\phi_1-\phi_2\|_*.\]
Therefore, using the estimates above and Proposition \ref{exist},
\begin{align*}
  \|K_q(\phi_1)-K_q(\phi_2)\|_*&\leq C(\|E(\phi_1)-E(\phi_2)\|_*+\|N(\phi_1)-N(\phi_2)\|_*) \\
  &\leq C(\varepsilon^{\min\{1,p-1\}(n+2s)}+\tau^{\min\{1,p-1\}})\|\phi_1-\phi_2\|_*.
\end{align*}
Choosing $\varepsilon$ (and hence $\tau$) small enough,
\[\|K_q(\phi_1)-K_q(\phi_2)\|_*<\|\phi_1-\phi_2\|_*,\]
which completes the proof of \eqref{Kq}.

By a fixed point argument, there exists a unique solution $\phi\in B$ to \eqref{projectedp}. 
$\hfill\square$\\

We now estimate the derivative of the error term $E(\phi).$
\begin{lemma} \label{estimateE1}
Let $q\in\Xi_\eta$ and $\phi\in Y$ with $\|\phi\|_*\leq {C_0}\tau$, where $\tau:=\varepsilon+\eta^{\mu-n-2s}$ and $C_0$ given in Proposition \ref{Phi}. Then, there exists a positive constant $C$, independent of $q$, such that
\[\left\|\frac{\partial E(\phi)}{\partial q_{ij}}\right\|_*\leq C\varepsilon^{\min\{1,(p-1)(n+2s)\}}, \qquad \mbox{ for every }\; i\in\{1,\ldots, k\},\;j\in\{1,\ldots,n\}.\]
\end{lemma}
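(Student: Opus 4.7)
The plan is to decompose $E(\phi)$ as in \eqref{E} into the three pieces
\begin{equation*}
A(\phi) := (U_q+\phi)^p - (W_q+\phi)^p, \quad B := \sum_{i=1}^k (\lambda_i - V(\varepsilon x))\bar{u}_i, \quad C := \Big(\sum_{i=1}^k w_i\Big)^p - \sum_{i=1}^k w_i^p,
\end{equation*}
to differentiate each explicitly with respect to $q_{ij}$ (with $\phi\in Y$ frozen, so only the explicit $q$-dependence is touched), and to estimate each derivative separately in the $\|\cdot\|_*$ norm. These three pieces reflect the boundary, potential and interaction effects respectively, and the asymptotic tools needed for each are already in place from Section 2.

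For piece $A$, I would split the derivative as
\begin{equation*}
\partial_{q_{ij}}A(\phi) = p\bigl[(U_q+\phi)^{p-1}-(W_q+\phi)^{p-1}\bigr]\partial_{q_{ij}}U_q + p(W_q+\phi)^{p-1}\bigl(\partial_{q_{ij}}U_q - \partial_{q_{ij}}W_q\bigr).
\end{equation*}
The first bracket is bounded by $C|U_q-W_q|^{\min\{1,p-1\}}\leq C\varepsilon^{\min\{1,p-1\}(n+2s)}$ via Lemma \ref{ineq} and Lemma \ref{limit}, while $\partial_{q_{ij}}U_q - \partial_{q_{ij}}W_q$ is controlled by $C\varepsilon^{\nu_1}$ via Lemma \ref{limit1}; the remaining factors $\partial_{q_{ij}}U_q$ and $(W_q+\phi)^{p-1}$ are uniformly bounded in $q$. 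For piece $B$, the derivative is $\varepsilon\,(\partial V/\partial\xi_{ij})(\xi_i)\bar{u}_i + (\lambda_i - V(\varepsilon x))\partial_{q_{ij}}\bar{u}_i$. The first term is $O(\varepsilon)$ in $\|\cdot\|_*$ by Remark \ref{unifV} together with $\bar{u}_i\leq w_i$; the second combines the Lipschitz bound $|\lambda_i - V(\varepsilon x)|\leq C\varepsilon|x-q_i|$ with Lemma \ref{limit1} (to replace $\partial_{q_{ij}}\bar{u}_i$ by $\partial_{q_{ij}}w_i$ up to $O(\varepsilon^{\nu_1})$), and Corollary \ref{cor:derw_qij} with Lemma \ref{Zij} to exhibit the decay $(1+|x-q_i|)^{1-\nu_1}$, which is absorbed by $\rho_q$ since $\mu < \nu_1$.

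For piece $C$, only $w_i$ depends on $q_{ij}$, so $\partial_{q_{ij}}C = p\bigl[W_q^{p-1} - w_i^{p-1}\bigr]\partial_{q_{ij}}w_i$. I would apply \cite[Lemma 7.11]{DdPDV2015} to bound the bracket by $C\bigl(\sum_{\ell\neq i}w_\ell\bigr)^{\min\{1,p-1\}}$, and Corollary \ref{cor:derw_qij} together with Lemma \ref{Zij} to bound $|\partial_{q_{ij}}w_i|$ by $C(1+|x-q_i|)^{-\nu_1} + C\varepsilon(1+|x-q_i|)^{1-\nu_1}$. Splitting $\Omega_\varepsilon$ into $\{|x-q_i|\leq \eta/2\}$, where $\sum_{\ell\neq i}w_\ell \leq C\eta^{-(n+2s)}$, and its complement, where $|\partial_{q_{ij}}w_i| \leq C\eta^{-\nu_1}$, and exploiting $\mu < \nu_1$ from \eqref{mu}, one obtains after weighting by $\rho_q^{-1}$ the bound $\|\partial_{q_{ij}}C\|_* \leq C\eta^{-(n+2s)\min\{1,p-1\}}$. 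The geometric constraint from $\Xi_\eta$ (together with $q_i\in \Omega_\varepsilon$) yields $\eta^{-(n+2s)\min\{1,p-1\}} \leq C\varepsilon^{(n+2s)\min\{1,p-1\}}$, which is dominated by $\varepsilon^{\min\{1,(p-1)(n+2s)\}}$ since $n+2s>1$.

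The delicate step is piece $C$: balancing the $(p-1)$-power interaction estimate against the decay of $\partial_{q_{ij}}w_i$ and the weight $\rho_q$ requires the domain-splitting above, and converting the $\eta^{-(n+2s)\min\{1,p-1\}}$ bound into the claimed $\varepsilon$-power relies on the geometric structure of $\Xi_\eta$. Pieces $A$ and $B$, once the right identities are written down, reduce to routine applications of the preparatory lemmata.
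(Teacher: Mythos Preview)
Your three-piece decomposition and the estimates for each piece are essentially the same as the paper's, but there is one structural difference worth flagging: you freeze $\phi$ and compute only the explicit $q$-dependence, whereas the paper treats $\phi$ as $\Phi(q)$ (the solution supplied by Proposition~\ref{Phi}) and differentiates it as well. Concretely, the paper's analogue of your $\partial_{q_{ij}}A(\phi)$ carries an extra factor $\partial_{q_{ij}}\phi$ in the first bracket, and the proof then closes a loop: it first obtains the intermediate estimate
\[
\Bigl\|\tfrac{\partial E(\phi)}{\partial q_{ij}}\Bigr\|_*\leq C\Bigl(\varepsilon^{\min\{1,p-1\}(n+2s)}\Bigl\|\tfrac{\partial\phi}{\partial q_{ij}}\Bigr\|_*+\varepsilon^{\min\{1,(p-1)(n+2s)\}}+\eta^{\mu-n-2s}\Bigr),
\]
then feeds this into Proposition~\ref{differT} together with a matching bound on $\partial_{q_{ij}}N(\phi)$ to deduce $\|\partial_{q_{ij}}\phi\|_*\leq C$, and finally substitutes back. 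Your frozen-$\phi$ version is cleaner and is a legitimate reading of the statement, but be aware that the downstream use (Proposition~\ref{Phi1}, Lemma~\ref{equivalence}) needs the \emph{total} derivative of $E(\Phi(q))$; with your version the bootstrap on $\partial_{q_{ij}}\Phi$ has to be carried out separately there, via $\partial_\phi E=p[(U_q+\phi)^{p-1}-(W_q+\phi)^{p-1}]=O(\varepsilon^{\min\{1,p-1\}(n+2s)})$, which is small enough to absorb.

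Two smaller remarks. For piece $C$ the paper works on the subdomains $\Omega_\ell$ of \eqref{Omegai} rather than on balls of radius $\eta/2$, and records the slightly weaker bound $\|I_3\|_*\leq C\eta^{\mu-n-2s}$; your $\eta^{-(n+2s)\min\{1,p-1\}}$ is sharper and your splitting is fine. Finally, your last step (``the geometric constraint from $\Xi_\eta$ yields $\eta^{-(n+2s)\min\{1,p-1\}}\leq C\varepsilon^{(n+2s)\min\{1,p-1\}}$'') tacitly assumes $\eta\geq c/\varepsilon$; the definition \eqref{Xi} of $\Xi_\eta$ does not enforce this in general, though the paper makes the same assumption at the end of its own proof.
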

\begin{proof}
By Proposition \ref{differT}, the function $\frac{\partial\phi}{\partial q_{ij}}$ is well defined, and we can write
\begin{align*}
  \frac{\partial E(\phi)}{\partial q_{ij}} &=I_1+I_2+I_3,
\end{align*}
where
\begin{align*}
  I_1&:=p[(U_q+\phi)^{p-1}-(W_q+\phi)^{p-1}]\left(\frac{\partial \phi}{\partial q_{ij}}+\frac{\partial W_q}{\partial q_{ij}}\right)+p(U_q+\phi)^{p-1}\left(\frac{\partial U_q}{\partial q_{ij}}-\frac{\partial W_q}{\partial q_{ij}}\right),
  \end{align*}
  \begin{align*}
 I_2&:=(\lambda_i-V(\varepsilon x))\frac{\partial \bar{u}_i}{\partial q_{ij}}+\varepsilon \frac{\partial V(\xi_i)}{\partial \xi_{ij}} \bar{u}_i,\qquad 
  I_3:=p\bigg(\bigg(\sum\limits_{\ell=1}^kw_\ell\bigg)^{p-1}-w_i^{p-1}\bigg)\frac{\partial w_i}{\partial q_{ij}}.
\end{align*}
Let us estimate $I_1$. By \cite[Lemma 7.11]{DdPDV2015}, Lemma \ref{limit} and Lemma \ref{limit1},
\begin{equation*}\begin{split}
|I_1|&\leq C\bigg(|U_q-W_q|^r\bigg(\bigg|\frac{\partial\phi}{\partial q_{ij}}\bigg|+\bigg|\frac{\partial W_q}{\partial q_{ij}}\bigg|\bigg)+(|U_q|^{p-1}+|\phi|^{p-1})\bigg|\frac{\partial U_q}{\partial q_{ij}}-\frac{\partial W_q}{\partial q_{ij}}\bigg|\bigg)\\
&\leq C\bigg(\varepsilon^{r(n+2s)}\bigg(\bigg|\frac{\partial\phi}{\partial q_{ij}}\bigg|+\bigg|\frac{\partial W_q}{\partial q_{ij}}\bigg|\bigg)+\varepsilon^{\nu_1}(|U_q|^{p-1}+|\phi|^{p-1})\bigg)
\end{split}\end{equation*}
where $r:=\min\{1,p-1\}$. Notice that,
\begin{equation*}\begin{split}
\sup_{x\in\R^n} &\big(\rho_q^{-1} \varepsilon^{\nu_1}|U_q|^{p-1}\big)=\sup_{x\in\Omega_\varepsilon} \big(\rho_q^{-1} \varepsilon^{\nu_1}\big(\varepsilon^{(p-1)(n+2s)}+|W_q|^{p-1}\big)\big)\\
&\leq C\big(\varepsilon^{\nu_1-\mu+(p-1)(n+2s)}+\|W_q\|_*^{p-1}\varepsilon^{\nu_1-\mu (2-p)_+}\big)\leq C\varepsilon,
\end{split}\end{equation*}
where in the last step we used \eqref{mu} and  the fact that $\nu_1=\{n+2s+1,p(n+2s)\}.$
Likewise,
\begin{equation*}\begin{split}
\sup_{x\in\R^n} &\big(\rho_q^{-1} \varepsilon^{\nu_1}|\phi|^{p-1}\big)\leq C\big(\|\phi\|_*^{p-1}\varepsilon^{\nu_1-\mu (p-2)_+}\big)\leq C\big(\tau^{p-1}\varepsilon^{\nu_1-\mu (p-2)_+}\big)\leq C\tau^{p-1}\varepsilon.
\end{split}\end{equation*}
Hence, by virtue of Lemma \ref{Zij} and  Corollary \ref{cor:derw_qij},
\[\|I_1\|_*\leq C\left(\varepsilon^{r(n+2s)}\left(\left\|\frac{\partial \phi}{\partial q_{ij}}\right\|_*+\left\|\frac{\partial W_q}{\partial q_{ij}}\right\|_*\right)+\varepsilon\right)\leq C\left(\varepsilon^{r(n+2s)}\left\|\frac{\partial \phi}{\partial q_{ij}}\right\|_*+\varepsilon^{\min\{1,(p-1)(n+2s)\}}\right)\]
where $r:=\min\{1,p-1\}$.  

Using Lemma \ref{limit1}, Lemma \ref{Zij}, and the boundedness of $\Omega$ and Remark \ref{unifV}, 
\begin{align*}
  |I_2|
  &\leq C \varepsilon\left(|x-q_i|\Big|\frac{\partial w_i}{\partial q_{ij}}+\varepsilon^{\nu_1}\Big|+ |w_i+\varepsilon^{n+2s}|\right)\\
  &\leq C \rho_q\varepsilon\left(\frac{|x-q_i|}{(1+|x-q_i|)^{\nu_1-\mu}}+(1+|x-q_i|)^{\mu+1}\varepsilon^{\nu_1}+\frac{1}{(1+|x-q_i|)^{n+2s-\mu}}+(1+|x-q_i|)^{\mu}\varepsilon^{n+2s}\right)\\
  &\leq C \rho_q\varepsilon\big(1+\varepsilon^{\nu_1-\mu-1}+\varepsilon^{n+2s-\mu}\big)\leq C \rho_q\varepsilon.
\end{align*}
Let us finally estimate $I_3$. Consider, for every $i=1,\ldots, k$, the subdomain $\Omega_i$ defined in \eqref{Omegai}.
In this region
$$|x-q_\ell|\geq \frac{|q_i-q_\ell|}{2}\quad \mbox{ for every }\ell\neq i,$$
and thus, by Corollary \ref{cor:derw_qij}, Lemma \ref{Zij}, Lemma \ref{ineq} and \eqref{behavwi} we have
\begin{align*}
\bigg|p\bigg(\bigg(\sum\limits_{\ell=1}^kw_\ell\bigg)^{p-1}-w_i^{p-1}\bigg)\frac{\partial w_i}{\partial q_{ij}}\bigg| & \leq C(1+|x-q_i|)^{-\nu_1}\left(w_i^{p-2}\sum\limits_{\ell \neq i}w_\ell+\bigg(\sum\limits_{\ell\neq i}w_\ell\bigg)^{p-1}\right)\\
&\leq \frac{C}{(1+|x-q_i|)^{(n+2s)(p-2)+\nu_1+\mu}}\sum\limits_{\ell\neq i}\frac{1}{|x-q_\ell|^{n+2s-\mu}}\\
&\leq \frac{C}{(1+|x-q_i|)^{(n+2s)(p-2)+\nu_1+\mu}}\sum\limits_{\ell\neq i}\frac{1}{|q_i-q_\ell|^{n+2s-\mu}}\\
&\leq C\rho_q\eta^{\mu-n-2s}
\end{align*}
with $C>0$ independent of $q$. Repeating the argument for every $i$ we decude $\|I_3\|_*\leq C \eta^{\mu-n-2s}$ and therefore
\begin{equation}\label{eq:derErr}
\left\|\frac{\partial E(\phi)}{\partial q_{ij}}\right\|_*\leq C\left(\varepsilon^{r(n+2s)}\left\|\frac{\partial \phi}{\partial q_{ij}}\right\|_*+\varepsilon^{\min\{1,(p-1)(n+2s)\}}+\eta^{\mu-n-2s}\right),
\end{equation}
for $r:=\min\{1,p-1\}$. By Proposition \ref{differT}, Lemma \ref{estimateN} and Lemma \ref{estimateE},
$$\left\|\frac{\partial \phi}{\partial q_{ij}}\right\|_*\leq C\left(\tau + \varepsilon^{r(n+2s)}\left\|\frac{\partial \phi}{\partial q_{ij}}\right\|_*+\varepsilon^{\min\{1,(p-1)(n+2s)\}}+\eta^{\mu-n-2s}+\left\|\frac{\partial N(\phi)}{\partial q_{ij}}\right\|_*\right).$$
Proceeding like in \cite[Lemma 7.14]{DdPDV2015} one can prove 
$$\left\|\frac{\partial N(\phi)}{\partial q_{ij}}\right\|_*\leq C\left(\tau^r\left\|\frac{\partial \phi}{\partial q_{ij}}\right\|_*+1\right),$$
and hence, for $\varepsilon$ small enough
$$\left\|\frac{\partial \phi}{\partial q_{ij}}\right\|_*\leq C,$$
with $C>0$ independent of $q$. Substituting in \eqref{eq:derErr} and using the fact that $\eta\geq \frac{\delta_\star}{\varepsilon}$ we conclude the result.
\end{proof}

\begin{proposition} \label{Phi1} If the problem \eqref{projectedp} has a unique solution $\Phi(q)$ for every $q\in\Xi_\eta$, then the map $q\mapsto \Phi(q)$ is $C^1$ and there exists a positive constant $C$, independent of $q$, such that
\[\left\|\frac{\partial\Phi(q)}{\partial q_{ij}}\right\|_*\leq C\left(\|E\|_*+\left\|\frac{\partial E(\phi)}{\partial q_{ij}}\right\|_*\right)\qquad \mbox{ for every }\;i\in\{1,\ldots, k\},\;j\in\{1,\ldots,n\}.\]
\end{proposition}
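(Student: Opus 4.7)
The plan is to apply the implicit function theorem to the Banach space fixed-point equation characterizing $\Phi(q)$. Concretely, define
\[G\colon \Xi_\eta\times Y\longrightarrow Y,\qquad G(q,\phi):=\phi - T_q\bigl[E(\phi)+N(\phi)\bigr],\]
so that, by Proposition \ref{Phi}, $\Phi(q)$ is the unique zero of $G(q,\cdot)$ in the ball $B:=\{\|\phi\|_*\leq C_0\tau\}$ with $\tau:=\varepsilon+\eta^{\mu-n-2s}$. Proposition \ref{differT} guarantees that $T_q$ is $C^1$ in $q$, and the polynomial structure of $E(\phi),N(\phi)$ makes them $C^1$ in $\phi$; hence $G$ is $C^1$ on a neighborhood of every $(q,\Phi(q))$.

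The key analytic step is invertibility of the partial derivative $\partial_\phi G$ at $(q,\Phi(q))$. Explicitly,
\[\partial_\phi G(q,\phi)\psi=\psi - T_q\bigl[\mathcal{L}(q,\phi)\psi\bigr],\qquad \mathcal{L}(q,\phi)\psi:=p\bigl[(U_q+\phi)^{p-1}-W_q^{p-1}\bigr]\psi.\]
By Lemma \ref{limit} one has $|U_q-W_q|\leq C\varepsilon^{n+2s}$, and $\|\phi\|_*\leq C_0\tau$ by Proposition \ref{Phi}. Combining Lemma \ref{ineq} with the boundedness of $W_q$ yields
\[\|\mathcal{L}(q,\phi)\psi\|_*\leq C\bigl(\varepsilon^{\min\{1,p-1\}(n+2s)}+\tau^{\min\{1,p-1\}}\bigr)\|\psi\|_*,\]
so Proposition \ref{exist} shows that $T_q\circ\mathcal{L}$ has operator norm on $Y$ strictly less than $1/2$ for $\varepsilon$ small, uniformly in $q\in\Xi_\eta$. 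Hence $\partial_\phi G(q,\Phi(q))=\mathrm{Id}-T_q\circ\mathcal{L}$ is invertible by Neumann series with inverse bounded uniformly in $q$, and the implicit function theorem yields the $C^1$ dependence of $\Phi$ on $q$ together with
\[\frac{\partial\Phi(q)}{\partial q_{ij}}=-\bigl[\partial_\phi G(q,\Phi(q))\bigr]^{-1}\partial_{q_{ij}}G(q,\Phi(q)).\]

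To extract the announced quantitative bound, I differentiate the definition of $G$, using the linearity of $T_q$:
\[\partial_{q_{ij}}G(q,\phi)=-\frac{\partial T_q}{\partial q_{ij}}\bigl[E(\phi)+N(\phi)\bigr]-T_q\!\left[\frac{\partial E(\phi)}{\partial q_{ij}}+\frac{\partial N(\phi)}{\partial q_{ij}}\right]\!.\]
Applying Proposition \ref{differT} to the first summand (treating $E(\phi)+N(\phi)$ as a frozen element of $Y$ so that only $\|E+N\|_*$ and $\|\partial_{q_{ij}}(E+N)\|_*$ appear) and Proposition \ref{exist} to the second reduces the estimate to controlling $\|N(\phi)\|_*$ and $\|\partial_{q_{ij}}N(\phi)\|_*$ by $\|E\|_*$ and $\|\partial_{q_{ij}}E\|_*$. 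Lemma \ref{estimateN} gives $\|N(\phi)\|_*\leq C(\|\phi\|_*^2+\|\phi\|_*^p)\leq C\tau\|E\|_*\leq C\|E\|_*$ for $\varepsilon$ small. For the $q$-derivative of $N$, the decomposition
\[\frac{\partial N(\phi)}{\partial q_{ij}}=p\bigl[(W_q+\phi)^{p-1}-W_q^{p-1}\bigr]\frac{\partial \phi}{\partial q_{ij}}+p\bigl[(W_q+\phi)^{p-1}-W_q^{p-1}-(p-1)W_q^{p-2}\phi\bigr]\frac{\partial W_q}{\partial q_{ij}},\]
together with Lemma \ref{ineq}, the smallness of $\|\phi\|_*$, and the uniform bound on $\|\partial W_q/\partial q_{ij}\|_*$ provided by Lemma \ref{Zij} and Corollary \ref{cor:derw_qij}, produces a prefactor $C\tau^{\min\{1,p-1\}}$ in front of $\|\partial_{q_{ij}}\phi\|_*$ (absorbed on the left through the uniformly bounded inverse) and a remainder dominated by a constant multiple of $\|E\|_*$. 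Combining everything yields the claimed estimate.

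The main obstacle is less a single conceptual hurdle than uniform bookkeeping: one must verify simultaneously that (i) $\partial_\phi G(q,\Phi(q))$ is invertible with an inverse whose norm is uniform in $q\in\Xi_\eta$ (without which the implicit function theorem would not transfer the bound), and (ii) every $N$-term reappearing after differentiation can be absorbed into $\|E\|_*+\|\partial_{q_{ij}}E\|_*$ or into the left-hand side through the smallness of $\tau$. Both rely crucially on Lemma \ref{limit}, the smallness of $\|\phi\|_*$ from Proposition \ref{Phi}, and the uniform-in-$q$ estimates already established in Propositions \ref{exist} and \ref{differT}.
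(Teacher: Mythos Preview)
Your proposal is correct and follows essentially the same route the paper indicates: the paper's own proof of Proposition \ref{Phi1} simply invokes Proposition \ref{differT} together with the implicit function theorem, referring to \cite[Proposition 5.1]{DdPW2014} for the details, and your write-up is a faithful unpacking of exactly that argument. The only minor wrinkle is that you mix the implicit function theorem formalism (where $\partial_{q_{ij}}G$ is computed with $\phi$ frozen) with the total-derivative computation of $\partial N(\phi)/\partial q_{ij}$ (which already contains a $\partial\phi/\partial q_{ij}$ term); this is harmless, since either viewpoint---IFT with Neumann-series inverse, or direct differentiation of the fixed-point identity $\Phi=T_q[E(\Phi)+N(\Phi)]$ followed by absorption of the small $\partial\phi/\partial q_{ij}$ terms---leads to the same bound.
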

\begin{proof}
The proof follows as a consequence of Proposition \ref{differT} and the implicit function theorem reproducing the arguments in \cite[Proposition 5.1]{DdPW2014}, so we skip the details.
\end{proof}

\section{The variational reduction}
\label{energy}
Problem \eqref{dirichlet1} has a variational structure: its solutions can be seen as critical points of the functional
\begin{equation}\label{Jepsilon}
  J_\varepsilon(u):=\frac{1}{2}\int_{\Omega_\varepsilon}((-\Delta)^suu+V(\varepsilon x)u^2)\,dx-\frac{1}{p+1}\int_{\Omega_\varepsilon} u^{p+1}\,dx, \quad u\in H^s_0(\Omega_\varepsilon).
\end{equation}
Let us also consider the functionals associated to problem \eqref{eqwlambda} for every $\lambda_i$, $i=1,\ldots, k$, 
\begin{equation}\label{Ji}
J_i(u):=\frac{1}{2}\int_{\mathbb{R}^n}((-\Delta)^suu+\lambda_i u^2)\,dx-\frac{1}{p+1}\int_{\mathbb{R}^n} u^{p+1}\,dx,\qquad u\in H^s(\mathbb{R}^n).
\end{equation}
\begin{proposition}\label{propenergy} Let $q\in\Xi_\eta$, and  $U_q$, $W_q$ defined in \eqref{Wq}. Then
\begin{equation}\label{eqenergy}
  J_\varepsilon (U_q)=J(W_q)+O\bigg(\varepsilon \sum\limits_{i=1}^k |\nabla V(\xi_i)|+\varepsilon^2\bigg),
\end{equation}
where 
\begin{equation}\begin{split}\label{JWq}
J(W_q):=\sum\limits_{i=1}^kJ_i(w_i)+\frac{1}{2}\sum\limits_{i\neq \ell} \int_{\mathbb{R}^n} w_i^pw_\ell\,dx-\frac{1}{p+1}\int_{\mathbb{R}^n}\bigg(\bigg(\sum\limits_{i=1}^kw_i\bigg)^{p+1}-\sum\limits_{i=1}^k w_i^{p+1}\bigg)\,dx.
\end{split}\end{equation}
\end{proposition}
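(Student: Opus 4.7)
The plan is to split the energy into its quadratic and nonlinear components, use the defining equation of $\bar u_i$ to simplify the quadratic form, and then Taylor-expand $V(\varepsilon x)$ around $\xi_i$ to produce the leading $\varepsilon\sum_i|\nabla V(\xi_i)|$ contribution while absorbing the boundary corrections, peak interactions, and quadratic Taylor remainder into $O(\varepsilon^2)$. A short algebraic rearrangement of the principal terms will then identify them with \eqref{JWq}.

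For the quadratic part, since each $\bar u_i\in H^s_0(\Omega_\varepsilon)$ satisfies \eqref{dirichlet2}, self-adjointness of the bilinear form yields
$$2Q:=\int_{\Omega_\varepsilon}\!\bigl((-\Delta)^sU_q\,U_q+V(\varepsilon x)U_q^2\bigr)\,dx=\sum_{i,\ell}\int_{\Omega_\varepsilon}\!w_i^p\bar u_\ell\,dx+\sum_{i,\ell}\int_{\Omega_\varepsilon}\!(V(\varepsilon x)-\lambda_i)\bar u_i\bar u_\ell\,dx.$$
In the first sum, substituting $\bar u_\ell=w_\ell-\Lambda_\ell-\Pi_\ell$ via Lemma \ref{expressionui} produces the desired term $\sum_{i,\ell}\int_{\R^n}w_i^pw_\ell$; the remaining pieces $\int w_i^p\Lambda_\ell$ and $\int w_i^p\Pi_\ell$ are of order $d^{-(n+2s)}=O(\varepsilon^{n+2s})$ by Proposition \ref{prop:boundLambda} and Lemma \ref{minimum2}, and the domain extension from $\Omega_\varepsilon$ to $\R^n$ costs only a higher power of $\varepsilon$ by the decay \eqref{behavwi}. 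For the potential discrepancy, I Taylor-expand $V(\varepsilon x)-\lambda_i=\varepsilon\nabla V(\xi_i)\cdot(x-q_i)+O(\varepsilon^2|x-q_i|^2)$ and invoke the finiteness of $\int|x-q_i|^aw_i^2\,dx$ for $a\in\{1,2\}$, guaranteed by $w\sim|x|^{-(n+2s)}$ and $n\geq 2$, together with $0\leq\bar u_i\leq w_i$; the off-diagonal contributions are smaller by the interaction bound $\int w_iw_\ell\leq C\eta^{-(n+2s)}$, so this sum is exactly $O(\varepsilon|\nabla V(\xi_i)|+\varepsilon^2)$.

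For the nonlinear piece, Lemma \ref{limit} gives $|U_q-W_q|\leq C\varepsilon^{n+2s}$, so Lemma \ref{ineq} yields $\int_{\Omega_\varepsilon}U_q^{p+1}\,dx=\int_{\Omega_\varepsilon}W_q^{p+1}\,dx+O(\varepsilon^{n+2s})$, and extending to $\R^n$ is again negligible. Collecting everything and using $\sum_iJ_i(w_i)=\bigl(\tfrac12-\tfrac{1}{p+1}\bigr)\sum_i\int_{\R^n}w_i^{p+1}$ (from $(-\Delta)^sw_i+\lambda_iw_i=w_i^p$), together with the algebraic identities
$$\tfrac12\!\sum_{i,\ell}\int_{\R^n}\!w_i^pw_\ell=\tfrac12\!\sum_i\int_{\R^n}\!w_i^{p+1}+\tfrac12\!\sum_{i\neq\ell}\int_{\R^n}\!w_i^pw_\ell$$
and
$$\tfrac{1}{p+1}\!\int_{\R^n}\!\bigl(\textstyle\sum_iw_i\bigr)^{p+1}=\tfrac{1}{p+1}\!\sum_i\int_{\R^n}\!w_i^{p+1}+\tfrac{1}{p+1}\!\int_{\R^n}\!\bigl[\bigl(\textstyle\sum w_i\bigr)^{p+1}-\sum w_i^{p+1}\bigr],$$
the collected leading terms reduce exactly to \eqref{JWq}, completing the expansion. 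The main obstacle is purely bookkeeping in the fractional setting: the slow polynomial decay of $w$ makes several of these error estimates borderline, and one has to check that the quadratic Taylor remainder, the boundary corrections, and the truncation to $\Omega_\varepsilon$ are all simultaneously dominated by $\varepsilon|\nabla V(\xi_i)|+\varepsilon^2$, which works precisely because $n+4s>2$ whenever $n\geq 2$.
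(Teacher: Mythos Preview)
Your approach mirrors the paper's almost exactly: both exploit \eqref{dirichlet2} to reduce the quadratic form to $\sum_{i,\ell}\int w_i^p\bar u_\ell+\sum_{i,\ell}\int(V(\varepsilon x)-\lambda_i)\bar u_i\bar u_\ell$, substitute $\bar u_\ell=w_\ell-\Lambda_\ell-\Pi_\ell$, Taylor-expand $V$, and control $\int(W_q^{p+1}-U_q^{p+1})$ via $|U_q-W_q|\leq C\varepsilon^{n+2s}$. The paper organizes the remainder into five labelled pieces $J_1$--$J_5$, but the substance is the same as your sketch.

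There is one genuine (though easily repaired) gap. For the off-diagonal part of the potential term you write that $\sum_{i\neq\ell}\int(V-\lambda_i)\bar u_i\bar u_\ell$ is ``smaller by the interaction bound $\int w_iw_\ell\le C\eta^{-(n+2s)}$''. That bound is correct, but it only gives $O(\eta^{-(n+2s)})$, and for $q\in\Xi_\eta$ nothing forces $\eta$ to be of order $\varepsilon^{-1}$; in the clustering regime of Theorem~\ref{result3} one has $\eta\sim\varepsilon^{-\alpha/(n+2s)}$ with $\alpha\in(0,1)$, so $\eta^{-(n+2s)}\sim\varepsilon^\alpha$ is \emph{not} $O(\varepsilon^2)$. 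The way out is to keep the Taylor expansion of $V$ in the off-diagonal term as well and use the \emph{pointwise} bound $|x-q_i|^a w_i(x)\le C$ for $a\in\{1,2\}$ (valid because $n+2s>2$ when $n\ge 2$); then
\[
\int_{\Omega_\varepsilon}|V(\varepsilon x)-\lambda_i|\,w_iw_\ell\,dx
\le C\varepsilon|\nabla V(\xi_i)|\int w_\ell + C\varepsilon^2\int w_\ell
= O\big(\varepsilon|\nabla V(\xi_i)|+\varepsilon^2\big),
\]
which is exactly what you need. The paper's own write-up of this step (the $J_3$ estimate, via the $\Omega_i$ decomposition) is itself somewhat elliptic on this point, but the underlying mechanism is the one above rather than the crude interaction bound. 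With this correction your argument is complete and coincides with the paper's.
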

\begin{proof}By \eqref{dirichlet2} and Lemma \ref{expressionui}, we have that
\begin{align*}
               J_\varepsilon(U_q)&=\frac{1}{2}\int_{\Omega_\varepsilon}\sum\limits_{i=1}^k w_i^p\sum\limits_{\ell=1}^k\bar{u}_\ell\,dx+\frac{1}{2}\int_{\Omega_\varepsilon}\sum\limits_{i=1}^k(V(\varepsilon x)-\lambda_i)\bar{u}_i\sum\limits_{\ell=1}^k\bar{u}_\ell\,dx -\frac{1}{p+1}\int_{\Omega_\varepsilon} U_q^{p+1}\,dx\\
               &=\frac{1}{2}\sum\limits_{i,\ell=1}^k \int_{\Omega_\varepsilon} w_i^p(w_\ell-\Lambda_\ell-\Pi_\ell)\,dx+ \frac{1}{2}\sum\limits_{i,\ell=1}^k\int_{\Omega_\varepsilon}(V(\varepsilon x)-\lambda_i)\bar{u}_i\bar{u}_\ell\,dx-\frac{1}{p+1}\int_{\Omega_\varepsilon} U_q^{p+1}\,dx\\
               &=\frac{1}{2}\sum\limits_{i=1}^k \int_{\Omega_\varepsilon} w_i^{p+1}\,dx+\frac{1}{2}\sum\limits_{i\neq \ell} \int_{\Omega_\varepsilon} w_i^pw_\ell\,dx-\frac{1}{2}\sum\limits_{i,\ell=1}^k \int_{\Omega_\varepsilon}w_i^p(\Lambda_\ell+\Pi_\ell)\,dx\\
              &\quad+\frac{1}{2}\sum\limits_{i,\ell=1}^k \int_{\Omega_\varepsilon}(V(\varepsilon x)-\lambda_i)\bar{u}_i\bar{u}_\ell\,dx -\frac{1}{p+1}\int_{\Omega_\varepsilon} U_q^{p+1}\,dx.
             \end{align*}
Notice that, by \eqref{eqwlambda} and Lemma \ref{expressionui},
\begin{align*}
\frac{1}{2}\int_{\Omega_\varepsilon} w_i^{p+1}&=J_i(w_i)+\frac{1}{p+1}\int_{\mathbb{R}^n} w_i^{p+1}\,dx-\frac{1}{2}\int_{\mathbb{R}^n\setminus\Omega_\varepsilon}w_i^{p+1}\,dx,
\end{align*}
and
\begin{align*}
   \int_{\Omega_\varepsilon}(V(\varepsilon x)-\lambda_i)\bar{u}_i\bar{u}_\ell &= \int_{\Omega_\varepsilon}\{(V(\varepsilon x)-\lambda_i)[w_iw_\ell+(\Lambda_i+\Pi_i)(\Lambda_\ell+\Pi_\ell)\\
   &\quad-(\Lambda_i+\Pi_i)w_\ell-(\Lambda_\ell+\Pi_\ell)w_i]\}\,dx.
\end{align*}
Therefore,
\begin{equation}\begin{split}\label{expJUq}
 J_\varepsilon(U_q)&=J(W_q)-\frac{1}{2}\sum\limits_{i,\ell=1}^k \int_{\Omega_\varepsilon}w_i^p(\Lambda_\ell+\Pi_\ell(x))\,dx-\frac{1}{2}\sum\limits_{i\neq l} \int_{\mathbb{R}^n\setminus\Omega_\varepsilon} w_i^pw_\ell\,dx\\
               &\quad+\frac{1}{2}\sum\limits_{i,\ell=1}^k \int_{\Omega_\varepsilon}\{(V(\varepsilon x)-\lambda_i)[w_iw_\ell+(\Lambda_i+\Pi_i)(\Lambda_\ell+\Pi_\ell)-(\Lambda_i+\Pi_i)w_l-(\Lambda_\ell+\Pi_\ell)w_i]\}\,dx\nonumber\\
   &\quad+\frac{1}{p+1}\int_{\Omega_\varepsilon} \left(W_q^{p+1}- U_q^{p+1}\right)\,dx
+\int_{\mathbb{R}^n\setminus\Omega_\varepsilon}\bigg(\frac{1}{p+1}\bigg(\sum\limits_{i=1}^kw_i\bigg)^{p+1}-\frac{1}{2}\sum\limits_{i=1}^kw_i^{p+1}\bigg)\,dx
\end{split}\end{equation}
where $J(W_q)$ was given in \eqref{JWq}.
Let us estimate this energy term by term. Denote
$$J_1:=\sum\limits_{i,\ell=1}^k \int_{\Omega_\varepsilon}w_i^p(\Lambda_\ell+\Pi_\ell)\,dx.$$
Using Proposition \ref{prop:boundLambda} and \eqref{behavwi} we get
\begin{align*}
\int_{\Omega_\varepsilon}w_i^p\Lambda_\ell\,dx & \leq \frac{C}{d^{(n+2s)p}}\int_{\Omega_\varepsilon}w_i^p\,dx\leq \varepsilon^{(n+2s)p},
\end{align*}
since $d\geq \frac{\delta_\star}{\varepsilon}$.
Hence, applying Lemma \ref{minimum2} we conclude that
\begin{equation}\label{J1}
J_1\leq C\varepsilon^{n+2s}.
\end{equation}
Likewise, 
\begin{align}
  J_2& :=\sum\limits_{i\neq \ell}\int_{\mathbb{R}^n\setminus\Omega_\varepsilon}w_i^pw_\ell\,dx
  \leq C\sum\limits_{i\neq \ell}\int_{\mathbb{R}^n\setminus\Omega_\varepsilon}|x-q_i|^{-p(n+2s)}|x-q_\ell|^{-n-2s}\,dx
  \leq C\varepsilon^{pn+(p+1)2s}\label{J2},
\end{align}
for a constant $C>0$ independent of $q$. 
Define
\begin{align*}
  J_3&:=\sum\limits_{i,\ell=1}^k \int_{\Omega_\varepsilon}\{(V(\varepsilon x)-\lambda_i)[w_iw_\ell+(\Lambda_i+\Pi_i)(\Lambda_\ell+\Pi_\ell)-(\Lambda_i+\Pi_i)w_\ell-(\Lambda_\ell+\Pi_\ell)w_i]\}\,dx.
\end{align*}
Due to the regularity of $V$ we can apply the mean value theorem to obtain
\begin{equation}\label{eq:VminusLambda}
|V(\varepsilon x)-\lambda_i|\leq C (\varepsilon|\nabla V(\xi_i)| |x-q_i|+\varepsilon^2|x-q_i|^2).
\end{equation}
Hence, using \eqref{behavwi},
\begin{align*}
&\int_{\Omega_\varepsilon} |V(\varepsilon x)-\lambda_i|w_i^2\, dx \leq C \bigg(\varepsilon |\nabla V(\xi_i)|\int_{\Omega_\varepsilon}  \frac{|x-q_i|}{(1+|x-q_i|)^{2(n+2s)}}\,dx+\varepsilon^2 \int_{\Omega_\varepsilon}  \frac{|x-q_i|^2}{(1+|x-q_i|)^{2(n+2s)}}\,dx\bigg)\\
&\qquad \leq C\bigg(\varepsilon |\nabla V(\xi_i)|+\varepsilon^2\bigg).
\end{align*}
Considering $\Omega_i$ defined in \eqref{Omegai},
\begin{align*}
\int_{\Omega_\varepsilon}|V(\varepsilon x)-\lambda_i|w_iw_\ell\leq \sum\limits_{i=1}^k \int_{\Omega_i}|V(\varepsilon x)-\lambda_i|w_i^2\leq C\sum\limits_{i=1}^k \bigg(\varepsilon |\nabla V(\xi_i)|+\varepsilon^2\bigg).
\end{align*}
Hence, using Proposition \ref{prop:boundLambda}, the boundedness of $V$, \eqref{behavwi}, and the fact that $0\leq \Pi_i\leq w_i$ (see Lemma \ref{expressionui}), we conclude that 
\begin{equation}\label{J3}
J_3\leq C\bigg(\varepsilon \sum_{i=1}^k|\nabla V(\xi_i)|+\varepsilon^2\bigg).
\end{equation}
Let us estimate
$$J_4:=\int_{\Omega_\varepsilon}(W_q^{p+1}-U_q^{p+1})\,dx.$$
We expand $W_q^{p+1}$ as
\[W_q^{p+1}(x)=U_q^{p+1}(x)+(p+1)W_q^{p}(x)(W_q(x)-U_q(x))+C\Upsilon_q^{p-1}(x)(W_q(x)-U_q(x))^2\]
where $0\leq U_q(x)\leq\Upsilon_q(x)\leq W_q(x),$ and $C$ is a positive constant depending only on $p.$ Then from \eqref{expressionUq},
\begin{align*}
  J_{4} &\leq C\bigg(\int_{\Omega_\varepsilon}W_q^p\sum\limits_{\ell=1}^k(\Lambda_\ell+\Pi_\ell)
  +\int_{\Omega_\varepsilon}W_q^{p-1}\bigg(\sum\limits_{\ell=1}^k(\Lambda_\ell+\Pi_\ell)\bigg)^{2}\bigg)\\
  &\leq C\bigg(\int_{\Omega_\varepsilon}W_q^p\sum\limits_{\ell=1}^k(\Lambda_\ell+\Pi_\ell)
  +\int_{\Omega_\varepsilon}W_q^{p-1}\bigg(\sum\limits_{\ell=1}^k\Lambda_\ell\bigg)^2+\int_{\Omega_\varepsilon}W_q^{p-1}\bigg(\sum\limits_{\ell=1}^k\Pi_\ell\bigg)^{2}\bigg)\\
    &\leq C\bigg(\int_{\Omega_\varepsilon}W_q^p\sum\limits_{\ell=1}^k(\Lambda_\ell+\Pi_\ell)
  +\int_{\Omega_\varepsilon}W_q^{p-1}\bigg(\sum\limits_{\ell=1}^k\Lambda_\ell\bigg)^2+\int_{\Omega_\varepsilon}W_q^{p}\sum\limits_{\ell=1}^k\Pi_\ell\bigg)\\  
    &\leq C\bigg(\sum_{i,\ell=1}^k\int_{\Omega_\varepsilon}w_i^p(\Lambda_\ell+\Pi_\ell)\,dx+\sum_{i,\ell=1}^k\int_{\Omega_\varepsilon}w_i^{p-1}\Lambda_\ell^2\,dx\bigg),
\end{align*}
where we have used the fact that $\Pi_\ell\leq w_\ell$ (see Lemma \ref{expressionui}) to affirm that 
$\sum_{\ell=1}^k\Pi_\ell\leq W_q$. Using \eqref{J1}, the boundedness of $w_i$ and Proposition \ref{prop:boundLambda} we conclude that 
\begin{equation}\label{J4}
J_4\leq C\bigg(\varepsilon^{n+2s}+\frac{C|\Omega_\varepsilon|}{d^{2p(n+2s)}}\bigg)\leq C\varepsilon^{n+2s}.
\end{equation}
Finally, exploiting $\eqref{behavwi}$ and the fact that $d\geq\frac{\delta_*}{\varepsilon},$ we have
\begin{equation}\begin{split}\label{J5}
  J_5&:=\int_{\mathbb{R}^n\setminus\Omega_\varepsilon}\bigg(\bigg(\frac{1}{p+1}\sum\limits_{i=1}^kw_i\bigg)^{p+1}-\frac{1}{2}\sum\limits_{i=1}^kw_i^{p+1}\bigg)\,dx\leq \frac{1}{2}\int_{\mathbb{R}^n\setminus\Omega_\varepsilon}\bigg(\bigg(\sum\limits_{i=1}^kw_i\bigg)^{p+1}-\sum\limits_{i=1}^kw_i^{p+1}\bigg)\,dx\\
  & \leq \frac{k^p-1}{2}\sum\limits_{i=1}^k\int_{\mathbb{R}^n\setminus\Omega_\varepsilon}w_i^{p+1}\leq C \varepsilon^{pn+(p+1)2s}.
\end{split}\end{equation}
Putting together \eqref{J1}-\eqref{J5} we conclude the result.
\end{proof}

Given $q\in\Xi_\eta$ and $U_q$, let us denote by $\Phi(q)\in X$ the unique solution to \eqref{projectedp} provided by Proposition \ref{Phi}. Then
\begin{equation}\label{defuq}
u_q:=U_q+\Phi(q),
\end{equation}
satisfies the equation
\begin{equation}\label{uq}
  (-\Delta)^{s} u_q + V(\varepsilon x)u_q -u_q^{p}=\sum\limits_{i=1}^k\sum_{j=1}^n c_{ij}Z_{ij}  \,\, \mbox{in }\; \Omega_\varepsilon,
\end{equation}
with $Z_{ij}$ specified in \eqref{deZij}. Let us define the function $I_\varepsilon: \Xi_\eta\rightarrow\mathbb{R}$ as
\begin{equation}\label{defIq}
I_\varepsilon(q):=J_\varepsilon(u)=J_\varepsilon(U_q+\Phi(q)),
\end{equation}
with $J_\varepsilon$ given in \eqref{Jepsilon}.

\begin{lemma} \label{equivalence} If $\varepsilon>0$ is small enough, the coefficients $c_{ij}$ in \eqref{uq} are equal to zero for all $i\in\{1,\ldots, j\}$, $j\in\{1,\ldots, k\},$ if and only if
\[\frac{\partial I_\varepsilon(q)}{\partial q}:=\left(\frac{\partial I_\varepsilon(q)}{\partial q_{11}},\ldots, \frac{\partial I_\varepsilon(q)}{\partial q_{1n}},\ldots,\frac{\partial I_\varepsilon(q)}{\partial q_{k1}},\ldots, \frac{\partial I_\varepsilon(q)}{\partial q_{kn}}\right)=0.\]
\end{lemma}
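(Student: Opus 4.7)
The plan is to compute $\partial_{q_{ij}} I_\varepsilon(q)$ via the chain rule, substitute the equation \eqref{uq}, and then study the linear system in the $c_{\ell m}$'s that results. Concretely, since $I_\varepsilon(q) = J_\varepsilon(u_q)$ with $u_q = U_q + \Phi(q)$, the chain rule gives
\begin{equation*}
\frac{\partial I_\varepsilon(q)}{\partial q_{ij}} = \big\langle J'_\varepsilon(u_q), \tfrac{\partial u_q}{\partial q_{ij}} \big\rangle,
\end{equation*}
and pairing \eqref{uq} against $\tfrac{\partial u_q}{\partial q_{ij}}$ gives
\begin{equation*}
\frac{\partial I_\varepsilon(q)}{\partial q_{ij}} = \sum_{\ell=1}^k\sum_{m=1}^n c_{\ell m} \int_{\Omega_\varepsilon} Z_{\ell m} \, \frac{\partial u_q}{\partial q_{ij}} \, dx =: \sum_{\ell,m} c_{\ell m} A_{(ij),(\ell m)}.
\end{equation*}

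The forward implication is immediate: if every $c_{\ell m}$ vanishes, then every partial derivative of $I_\varepsilon$ vanishes. For the reverse implication I would show that the $(kn) \times (kn)$ matrix $A = (A_{(ij),(\ell m)})$ is invertible provided $\varepsilon$ is small, so that the vanishing of the left-hand side forces $c_{\ell m} = 0$ for all indices. To analyze $A$, I would decompose
\begin{equation*}
\frac{\partial u_q}{\partial q_{ij}} = \frac{\partial \bar u_i}{\partial q_{ij}} + \frac{\partial \Phi(q)}{\partial q_{ij}}.
\end{equation*}
Combining Lemma \ref{limit1} and Corollary \ref{cor:derw_qij} yields
\begin{equation*}
\frac{\partial \bar u_i}{\partial q_{ij}}(x) = -Z_{ij}(x) + O\!\Big(\tfrac{\varepsilon}{(1+|x-q_i|)^{\nu_1-1}}\Big) + O(\varepsilon^{\nu_1}),
\end{equation*}
while Proposition \ref{Phi1}, combined with Lemma \ref{estimateE}, Lemma \ref{estimateE1} and the bound $\|\Phi(q)\|_* \leq C_0\tau$ from Proposition \ref{Phi}, provides the control $\|\tfrac{\partial \Phi(q)}{\partial q_{ij}}\|_* \to 0$ as $\varepsilon \to 0$.

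Substituting these expansions into $A_{(ij),(\ell m)}$, the leading term is $-\int_{\Omega_\varepsilon} Z_{\ell m} Z_{ij}\,dx$, which by Corollary \ref{orthogonal1} equals $-\alpha_i \delta_{i\ell}\delta_{jm} + O(\eta^{-\nu_1} + \varepsilon^{2\nu_1 - n})$. The remainder contributes $\int_{\Omega_\varepsilon} Z_{\ell m}(\cdot)$ tested against functions that are $O(\varepsilon)$ in weighted norms plus $\tfrac{\partial \Phi}{\partial q_{ij}}$, and these can be estimated using Lemma \ref{Zij} (for the polynomial decay of $Z_{\ell m}$) together with the $\|\cdot\|_*$-bounds above, giving a total error $o(1)$ as $\varepsilon \to 0$. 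Consequently $A = -\mathrm{diag}(\alpha_1 I_n, \ldots, \alpha_k I_n) + o(1)$, which is invertible since each $\alpha_i > 0$ is bounded away from zero uniformly in $q \in \Xi_\eta$ (by Remark \ref{remark2}). Inverting gives $c_{\ell m} = 0$ for every $\ell, m$.

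The main obstacle is the careful bookkeeping needed to verify that every off-diagonal and lower-order contribution to $A_{(ij),(\ell m)}$ is indeed small as $\varepsilon \to 0$: this requires combining the weighted $\|\cdot\|_*$-estimates on $\tfrac{\partial \Phi(q)}{\partial q_{ij}}$ from Proposition \ref{Phi1} with the polynomial decay of $Z_{\ell m}$ from Lemma \ref{Zij} and the slow-decay pairings controlled in Lemma \ref{orthogonal} and Corollary \ref{orthogonal1}, exactly in the spirit of the fractional (rather than exponential) setting that has made the whole paper delicate. Once the dominance of the diagonal part is established, the equivalence is a direct consequence of basic linear algebra.
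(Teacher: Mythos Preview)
Your proposal is correct and follows essentially the same route as the paper: compute $\partial_{q_{ij}} I_\varepsilon(q)$ by pairing \eqref{uq} against $\partial_{q_{ij}} u_q$, expand $\partial_{q_{ij}} u_q = -Z_{ij} + o(1)$ via Lemma~\ref{limit1}, Corollary~\ref{cor:derw_qij} and Proposition~\ref{Phi1} (with Lemmata~\ref{estimateE}, \ref{estimateE1}), and then invoke Corollary~\ref{orthogonal1} to see that the resulting $(kn)\times(kn)$ matrix is a small perturbation of $-\mathrm{diag}(\alpha_1 I_n,\ldots,\alpha_k I_n)$, hence invertible. The paper additionally refers to \cite[Lemma~7.16]{DdPDV2015} to justify the chain-rule computation at the level of the functional, but otherwise the argument and the cited ingredients coincide with yours.
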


\begin{proof}
From Lemma \ref{limit1} and Corollary \ref{cor:derw_qij}, we have that
\[\frac{\partial U_q}{\partial q_{\ell m}}=\frac{\partial W_q}{\partial q_{\ell m}}+O(\varepsilon^{\nu_1})=-Z_{\ell m}+O(\varepsilon), \qquad \ell=1,\ldots, k,\; m=1,\ldots, n,\]
and, from Lemmata \ref{estimateE}, \ref{estimateE1} and Proposition \ref{Phi1}, 
\[\frac{\partial \Phi(q)}{\partial q_{\ell m}}=O\left(\varepsilon^{\min\{1,(p-1)(n+2s)\}} +\eta^{\mu-n-2s}\right),\]
where $\eta :=\min\{|q_i-q_\ell|:i\neq \ell\}$. 
Therefore,
\[\frac{\partial u_q}{\partial q_{\ell m}}=-Z_{\ell m}+O\left(\varepsilon^{\min\{1,(p-1)(n+2s)\}} +\eta^{\mu-n-2s}\right),\]
and then, using Lemma \ref{Zij}, there exists $C>0$ such that
\[\left|\frac{\partial u_q}{\partial q_{\ell m}}\right|\leq C.\]
Proceeding like in \cite[Lemma 7.16]{DdPDV2015} we deduce
\begin{align}\label{eq:derqm}
  \frac{\partial I_\varepsilon(q)}{q_{\ell m}}
   &=\sum\limits_{i=1}^k\sum_{j=1}^n c_{ij}\int_{\Omega_\varepsilon}Z_{ij}\left(-Z_{\ell m}+O\left(\varepsilon^{\min\{1,(p-1)(n+2s)\}} +\eta^{\mu-n-2s}\right)\right)\,dx =-\sum\limits_{i=1}^k\sum_{j=1}^nc_{ij}M_{ij}^{\ell m}
   \end{align}
with 
$$M_{ij}^{\ell m}:=\alpha_i\delta_{i\ell}\delta_{jm}+O\left(\varepsilon^{\min\{1,(p-1)(n+2s)\}} +\eta^{\mu-n-2s}\right),$$
as a consequence of Corollary \ref{orthogonal1}. Notice that, by Lemma \ref{orthogonal}, $\alpha_i>0$. In order to write the equation in matricial form, notice that, given any number $\gamma\in\{1,\ldots,kn\}$ this can be univoquely written as
$$\gamma=(i-1)n+j\quad \mbox{ for certain (unique)}\quad i\in \{1,\ldots, k\},\, j\in \{1,\ldots, n\}.$$
Hence, using \eqref{eq:derqm} we can write
$$\frac{\partial I_\varepsilon(q)}{\partial q}= \mathbf{M} \mathbf{c}^t,$$
where $\mathbf{M}$ is a $kn\times kn$ matrix and $\mathbf{c}$ is a $kn$ dimensional vector whose entries are
$$\mathbf{M}_{r\gamma}:=M_{ij}^{\ell m}, \quad \mathbf{c}_r=c_{\ell m},$$
where
$$r=(\ell-1)n+m,\quad \gamma =(i-1)n+j,\qquad \ell,i \in \{1,\ldots, k\},\;\;m,j\in \{1,\ldots, n\}.$$
Using the fact that $\alpha_i>0$ (see Lemma \ref{orthogonal}) for every $i\in \{1,\ldots, k\}$ the invertibility of $\mathbf{M}$ follows for $\varepsilon$ small enough and $\eta$ sufficiently large, and the result holds. 
\end{proof}

\begin{lemma} \label{expansion} The following expansion hold:
\[I_\varepsilon(q)=J_\varepsilon(U_q)+O(\tau^2),\]
where $\tau:=\varepsilon +\eta^{\mu-n-2s}$.
\end{lemma}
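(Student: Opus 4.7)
The plan is to Taylor-expand the energy $J_\varepsilon$ around $U_q$ and exploit the smallness of $\Phi(q)$ guaranteed by Proposition \ref{Phi}, namely $\|\Phi\|_*\leq C_0\tau$. Concretely, I will write
\begin{equation*}
I_\varepsilon(q)-J_\varepsilon(U_q)=DJ_\varepsilon(U_q)[\Phi]+\int_0^1(1-t)\,D^2J_\varepsilon(U_q+t\Phi)[\Phi,\Phi]\,dt,
\end{equation*}
and check that each term on the right is $O(\tau^2)$.

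For the linear contribution, the key algebraic identity comes from the equation \eqref{dirichlet2} satisfied by the $\bar u_i$, which gives $(-\Delta)^sU_q+V(\varepsilon x)U_q-U_q^p=-E(0)$ with $E$ as in \eqref{E}. Hence $DJ_\varepsilon(U_q)[\Phi]=-\int_{\Omega_\varepsilon}E(0)\Phi\,dx$. Combining Lemma \ref{estimateE} at $\phi=0$ (which gives $\|E(0)\|_*\leq C\tau$) with $\|\Phi\|_*\leq C\tau$ and the elementary bound $\int_{\R^n}\rho_q^2\,dx\leq C$ uniform in $q$ (which holds because $2\mu>n$ by \eqref{mu}), I conclude $|DJ_\varepsilon(U_q)[\Phi]|\leq C\|E(0)\|_*\|\Phi\|_*\leq C\tau^2$.

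For the quadratic term I will exploit the projected equation \eqref{projectedp} itself. Multiplying it by $\Phi$, integrating, and using the orthogonality $\int\Phi Z_{ij}\,dx=0$ yields
\begin{equation*}
\int\bigl[|(-\Delta)^{s/2}\Phi|^2+V(\varepsilon x)\Phi^2\bigr]dx=p\int W_q^{p-1}\Phi^2\,dx+\int[E(\Phi)+N(\Phi)]\Phi\,dx.
\end{equation*}
Plugging this identity into $D^2J_\varepsilon(U_q+t\Phi)[\Phi,\Phi]=\int[|(-\Delta)^{s/2}\Phi|^2+V\Phi^2-p(U_q+t\Phi)^{p-1}\Phi^2]$ leads to
\begin{equation*}
D^2J_\varepsilon(U_q+t\Phi)[\Phi,\Phi]=p\int[W_q^{p-1}-(U_q+t\Phi)^{p-1}]\Phi^2\,dx+\int[E(\Phi)+N(\Phi)]\Phi\,dx.
\end{equation*}
Each summand is $O(\tau^2)$: the first by the uniform $L^\infty$ bounds on $W_q^{p-1}$ and $(U_q+t\Phi)^{p-1}$ together with $\int\Phi^2\leq\|\Phi\|_*^2\int\rho_q^2$; the $E(\Phi)$-contribution by Lemma \ref{estimateE} and the same $\rho_q^2$ bound; and the $N(\Phi)$-contribution by Lemma \ref{estimateN}, which gives $\|N(\Phi)\|_*\leq C(\|\Phi\|_*^2+\|\Phi\|_*^p)\leq C\tau^{\min\{2,p\}}$ and hence an even smaller contribution.

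The main technical obstacle is the algebraic bookkeeping in the last step: a naive Cauchy--Schwarz on the quadratic form based on the $H^s$-norm of $\Phi$ could recover $O(\tau^2)$ only after one first re-uses the projected equation to bound $\|\Phi\|_{H^s}$; the substitution I propose trades the $H^s$-bilinear form of $\Phi$ directly for quantities of the form $\int g\,\Phi$ with $\|g\|_*\leq C\tau$, which is conceptually cleaner and makes the cancellation transparent. Once this trick is in place, everything reduces to a routine application of the $\|\cdot\|_*$-estimates and the integrability of $\rho_q^2$ already available from Sections \ref{Section 5} and \ref{sec:nonlinear}.
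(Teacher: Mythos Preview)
Your proof is correct and follows essentially the same route as the paper. Both arguments expand $J_\varepsilon(U_q+\Phi)$ around $U_q$, use the projected equation \eqref{projectedp} together with the orthogonality $\int_{\Omega_\varepsilon}\Phi Z_{ij}=0$ to trade the $H^s$-quadratic form $\int((-\Delta)^s\Phi\,\Phi+V\Phi^2)$ for integrals of the type $\int g\,\Phi$ with $\|g\|_*=O(\tau)$, and then conclude via $\|\Phi\|_*\leq C_0\tau$ and $\int_{\R^n}\rho_q^2\,dx\leq C$. The only organisational difference is that the paper handles the first-order term by inserting $u_q$ and invoking \eqref{uq} (so the linear piece vanishes by orthogonality and its residue is absorbed into $\int(u_q^p-U_q^p)\Phi$), whereas you use the cleaner identity $(-\Delta)^sU_q+V(\varepsilon x)U_q-U_q^p=-E(0)$ directly and then Taylor's formula with integral remainder for the rest; both lead to the same estimates.
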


\begin{proof}Using definitions \eqref{defuq} and \eqref{defIq},
\begin{align*}
 I_\varepsilon(q)& =J_\varepsilon(U_q)+\int_{\Omega_\varepsilon}\Phi(q)[(-\Delta)^sU_q+V(\varepsilon x) U_q- U_q^{p}]\,dx+\frac{1}{2}\int_{\Omega_\varepsilon}(-\Delta)^s\Phi(q)\Phi(q)+V(\varepsilon x) \Phi^2(q)\,dx\\
   &\quad-\frac{1}{p+1}\int_{\Omega_\varepsilon} [(U_q+\Phi(q))^{p+1}-U_q^{p+1}-(p+1)U_q^p \Phi(q)]\,dx\\
   &=J_\varepsilon(U_q)+\int_{\Omega_\varepsilon}\left((-\Delta)^su_q+V(\varepsilon x) u_q- u_q^{p}\right)\Phi(q)\,dx\\
   &\quad-\int_{\Omega_\varepsilon}\left(-\Delta)^s(u_q-U_q)+V(\varepsilon x) (u_q-U_q)\right)\Phi(q)\,dx+\frac{1}{2}\int_{\Omega_\varepsilon}(-\Delta)^s\Phi(q)\Phi(q)+V(\varepsilon x) \Phi^2(q)\,dx\\
   &\quad +\int_{\Omega_\varepsilon}\left(u_q^{p}-U_q^{p}\right)\Phi(q)-\frac{1}{p+1}\int_{\Omega_\varepsilon} [(U_q+\Phi(q))^{p+1}-U_q^{p+1}-(p+1)U_q^p \Phi(q)]\,dx. 
\end{align*}
Since $u_q$ solves \eqref{uq} and 
\begin{equation}\label{eq:Phiort}
\int_{\Omega_\varepsilon}\Phi(q)Z_{ij}\,dx=0\qquad \mbox{ for every }i\in\{1,\ldots, k\},\;j\in \{1,\ldots,n\},
\end{equation}
we can reduce it to 
\begin{align*}
 I_\varepsilon(q)& =J_\varepsilon(U_q)-\frac{1}{2}\int_{\Omega_\varepsilon}(-\Delta)^s\Phi(q)\Phi(q)+V(\varepsilon x) \Phi^2(q)\,dx+\int_{\Omega_\varepsilon}\left(u_q^{p}-U_q^{p}\right)\Phi(q)\,dx\\
 &\quad -\frac{1}{p+1}\int_{\Omega_\varepsilon}[(U_q+\Phi(q))^{p+1}-U_q^{p+1}-(p+1)U_q^p \Phi(q)]\,dx.
\end{align*}
Applying Lemma \ref{ineq} and Lemma \ref{limit},
\begin{align*}
\left|\int_{\Omega_\varepsilon}\left(u_q^{p}-U_q^{p}\right)\Phi(q)\,dx\right|&\leq C\int_{\Omega_\varepsilon}|U_q|^{p-1}\Phi^2(q)\,dx\leq C\int_{\Omega_\varepsilon}|W_q+\varepsilon^{n+2s}|^{p-1}\Phi^2(q)\,dx\\
&\leq C\|\Phi(q)\|^2_*\int_{\Omega_\varepsilon}|W_q+\varepsilon^{n+2s}|^{p-1}\rho_q^2\,dx \leq C \tau^2,
\end{align*}
where $C$ is independent of $q$. Likewise, by Lemma \ref{ineq},
\begin{align*}
&\left|\int_{\Omega_\varepsilon}\left(u_q^{p+1}-U_q^{p+1}-(p+1)U_q^{p}\Phi(q)\right)dx\right|\leq C\int_{\Omega_\varepsilon}|U_q|^{p-1}\Phi^2(q)\,dx\leq C \tau^2.
\end{align*}
Since $\Phi$ satisfies equation \eqref{dirichletProj} and \eqref{eq:Phiort}, using Lemmata \ref{estimateN}, \ref{estimateE} and Proposition \ref{Phi}, we have that
\begin{align*}
&\left|\int_{\Omega_\varepsilon}(-\Delta)^s\Phi(q)\Phi(q)+V(\varepsilon x) \Phi^2(q)\,dx\right|\leq C(\|E(\Phi)\|_*+\|N(\Phi)\|_*+\|\Phi\|_*)\|\Phi\|_*\leq C\tau^2,
\end{align*}
where $C$ is independent of $q$. Therefore
\[I_\varepsilon(q)=J_\varepsilon(U_q)+O(\tau^2).\]
\end{proof}

\begin{lemma}\label{criticalV1bis}
Let $q\in \Xi_\eta$ and denote $\xi_i=\varepsilon q_i$. Assume that $u_q$ is a solution of \eqref{uq}. Then, for every $i=\{1,\ldots, k\}$ and $j\in\{1,\ldots, n\}$,
$$c_{ij}=\varepsilon \gamma_i \frac{\partial V(\xi_i)}{\partial x_j}+O\big(\varepsilon^{\frac 12} \tau+\eta^{-\min\{1,p-1\}(n+2s)}\tau+\varepsilon^{n+2s}+\varepsilon\eta^{-\nu_1+1}+\eta^{\mu-n-2s}\big),$$
provided $\eta\to \infty$ as $\varepsilon\to 0$, with $\tau:=\varepsilon+\eta^{\mu-n-2s}$ and $\gamma_i$ a positive constant independent of $j$.
\end{lemma}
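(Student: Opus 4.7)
The strategy is the same multiplier technique used in Lemma \ref{cij}, but applied to the full non linear equation \eqref{uq}. I fix $\ell\in\{1,\dots,k\}$ and $m\in\{1,\dots,n\}$, pick a radial cutoff $\tau_{\varepsilon,\ell}\equiv 1$ on $B_{c/\varepsilon-1}(q_\ell)$ with support in $B_{c/\varepsilon}(q_\ell)$, set $T_{\ell m}:=\tau_{\varepsilon,\ell}Z_{\ell m}$, and test \eqref{uq} against $T_{\ell m}$. The right-hand side, via Corollary \ref{orthogonal1}, produces $\alpha_\ell c_{\ell m}+\sum_{ij}c_{ij}\cdot O(\eta^{-\nu_1}+\varepsilon^{2\nu_1-n})$, so it behaves like the matrix $\mathbf{M}$ in Lemma \ref{equivalence}, which is invertible for $\varepsilon$ small and $\eta$ large. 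It therefore suffices to analyze the left-hand side.

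Using the eigenvalue-type identity $(-\Delta)^s Z_{\ell m}+\lambda_\ell Z_{\ell m}=pw_\ell^{p-1}Z_{\ell m}$, an integration by parts (picking up $O(\varepsilon^{n/2})$ boundary/truncation residuals, as in the proof of Lemma \ref{cij}) converts the left-hand side into
\[
\int_{\Omega_\varepsilon}\bigl[(V(\varepsilon x)-\lambda_\ell)u_q+pw_\ell^{p-1}u_q-u_q^{p}\bigr]Z_{\ell m}\,dx+(\text{small}).
\]
I then split $u_q=U_q+\Phi(q)$. The $\Phi$ contributions are handled exactly as in Lemma \ref{cij}: by H\"older and the decay estimates of Lemma \ref{Zij} together with the bounds for $|V(\varepsilon x)-\lambda_\ell|$, $|pw_\ell^{p-1}-pW_q^{p-1}|$ and $|U_q|^{p-1}|\Phi|+|\Phi|^{p}$, they produce terms controlled by $(\varepsilon^{1/2}+\eta^{-r(n+2s)})\|\Phi\|_*$ with $r:=\min\{1,p-1\}$, and by Proposition \ref{Phi} this is $O\bigl((\varepsilon^{1/2}+\eta^{-r(n+2s)})\tau\bigr)$.

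For the $U_q$ contribution, the nonlinear piece $\int[pw_\ell^{p-1}U_q-U_q^{p}]Z_{\ell m}\,dx$ is expanded by writing $U_q=w_\ell+v$, with $v=\sum_{i\neq\ell}w_i-\sum_i(\Lambda_i+\Pi_i)$; Lemma \ref{ineq} gives $U_q^{p}-pw_\ell^{p-1}U_q=-(p-1)w_\ell^{p}+O(w_\ell^{p-2}v^2+|v|^{p})$. The crucial observation is that the leading term vanishes by odd symmetry,
\[
\int_{\R^n}w_\ell^{p}Z_{\ell m}\,dx=\tfrac{1}{p+1}\int_{\R^n}\partial_{x_m}(w_\ell^{p+1})\,dx=0,
\]
so after combining Lemma \ref{minimum2}, Proposition \ref{prop:boundLambda} and Lemma \ref{limit}, what remains is $O(\eta^{\mu-n-2s}+\varepsilon^{n+2s})$.

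The potential piece $\int(V(\varepsilon x)-\lambda_\ell)U_q\,Z_{\ell m}\,dx$ carries the main term. Taylor expanding $V(\varepsilon x)-\lambda_\ell=\varepsilon\nabla V(\xi_\ell)\cdot(x-q_\ell)+O(\varepsilon^{2}|x-q_\ell|^{2})$, replacing $U_q$ by $w_\ell$ (the discrepancy is $O(\varepsilon^{n+2s})$ on $\Omega_\varepsilon$ by Lemma \ref{limit}, while the contribution of $\sum_{i\neq\ell}w_i$ against the decay of $Z_{\ell m}$ is $O(\varepsilon\eta^{-\nu_1+1})$), and using the radiality of $w_{\lambda_\ell}$ to kill the $j\neq m$ components, yields
\[
\varepsilon\,\partial_{x_m}V(\xi_\ell)\int_{\R^n}(x-q_\ell)_m\, w_\ell\,\partial_{x_m}w_\ell\,dx+O\bigl(\varepsilon^{2}+\varepsilon^{n+2s}+\varepsilon\eta^{-\nu_1+1}\bigr).
\]
The spatial integral is independent of $m$ by the $O(n)$-symmetry of $w_{\lambda_\ell}$, so dividing by $\alpha_\ell$ (from Corollary \ref{orthogonal1}) defines the constant $\gamma_\ell$ and yields the claimed expansion. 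Collecting every error term produces exactly the remainder in the statement.

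The main obstacle I expect is the bookkeeping: one must verify that the odd-symmetry cancellation applied to $w_\ell^{p}Z_{\ell m}$ actually removes the only $O(1)$-size contribution from the nonlinear term, and that every remaining cross-peak and boundary-correction integral can be absorbed either into the $\eta^{\mu-n-2s}$ interaction error or into one of the $\varepsilon$-powers listed. Once this is established, inverting the nearly diagonal matrix $\mathbf{M}$ as in Lemma \ref{equivalence} is routine.
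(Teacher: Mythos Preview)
Your approach is correct, but it is organized differently from the paper's proof. Rather than re-running the multiplier argument on \eqref{uq}, the paper observes that $\Phi(q)$ solves \eqref{projectedp}, which is precisely of the form \eqref{linearp} with $g=-(E(\Phi)+N(\Phi))$, so Lemma~\ref{cij} applies as a black box and gives
\[
c_{ij}=-\frac{1}{\alpha_i}\int_{\Omega_\varepsilon}\bigl(E(\Phi)+N(\Phi)\bigr)Z_{ij}\,dx+\tilde f_{ij},
\]
with the $\tilde f_{ij}$ bound already packaging all the $\Phi$-contributions you re-derive by hand. The $N(\Phi)$ integral is absorbed via Lemma~\ref{estimateN} and Proposition~\ref{Phi}, and for $\int E(\Phi)Z_{ij}$ the paper simply quotes the pointwise $\|\cdot\|_*$-bounds \eqref{eq:E1} and \eqref{eq:E3} on the boundary-correction and peak-interaction pieces of $E$, leaving only the potential piece $\sum_\ell(\lambda_\ell-V(\varepsilon x))\bar u_\ell$ to be Taylor-expanded as in \eqref{INT1}--\eqref{compwiZij}. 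In particular the paper never needs your odd-symmetry cancellation $\int_{\mathbb R^n} w_\ell^{\,p}Z_{\ell m}\,dx=0$, because in the $E$-decomposition the term $w_\ell^{\,p}$ never appears in isolation; the expression $\bigl(\sum_i w_i\bigr)^p-\sum_i w_i^p$ is already $O(\eta^{\mu-n-2s})$ in $\|\cdot\|_*$. Your route is conceptually transparent and the cancellation is a nice observation, but it duplicates work already encapsulated in Lemma~\ref{cij} and Lemma~\ref{estimateE}, and it forces you to control the Taylor remainder $O(w_\ell^{p-2}v^2+|v|^p)$ against $Z_{\ell m}$ directly (with extra care when $p<2$). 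The paper's route is shorter precisely because those lemmas were designed to be reused here.
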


\begin{proof}
If $u_q=U_q+\Phi(q)$ is a solution of \eqref{uq}, then $\Phi(q)$ solves \eqref{projectedp}, and hence, by Lemma \ref{cij},
\begin{equation}\begin{split}\label{value_cij}
c_{ij}&=-\frac{1}{\alpha_i}\int_{\Omega_\varepsilon}(E(\Phi)+N(\Phi))Z_{ij}\,dx+O(\varepsilon^{\frac{1}{2}}+\eta^{-\min\{1,p-1\}(n+2s)})(\|\Phi\|_{*}+\|E(\Phi)\|_{*}+\|N(\Phi)\|_{*})\\
&=-\frac{1}{\alpha_i}\int_{\Omega_\varepsilon}E(\Phi)Z_{ij}\,dx+O(\varepsilon^{\frac{1}{2}}\tau +\eta^{-\min\{1,p-1\}(n+2s)}\tau+\|N(\Phi)\|_*),
\end{split}\end{equation}
where in the last inequality we used Theorem \ref{Phi}, Lemma \ref{estimateN} and Lemma \ref{estimateE}.
Likewise, using \eqref{eq:E1} and \eqref{eq:E3},
\begin{equation*}\begin{split}
\int_{\Omega_\varepsilon}&E(\Phi)Z_{ij}\,dx = \sum_{\ell=1}^k\int_{\Omega_\varepsilon}(V(\varepsilon x)-\lambda_\ell)\bar{u}_\ell Z_{ij}\,dx+O(\varepsilon^{n+2s}+\eta^{\mu-n-2s}).
\end{split}\end{equation*}
 By Lemma \ref{limit},
\begin{align*}
\sum_{\ell=1}^k\int_{\Omega_\varepsilon}&(V(\varepsilon x)-\lambda_\ell)\bar{u}_\ell Z_{ij}\,dx =\int_{\Omega_\varepsilon}(V(\varepsilon x)-\lambda_i)U_q Z_{ij}\,dx+\int_{\Omega_\varepsilon}\sum_{\ell\neq i}^k(\lambda_i-\lambda_\ell)\bar{u}_\ell Z_{ij}\,dx\\
&=\int_{\Omega_\varepsilon}(V(\varepsilon x)-\lambda_i)W_q Z_{ij}\,dx+\int_{\Omega_\varepsilon}\sum_{\ell\neq i}^k(\lambda_i-\lambda_\ell)w_\ell Z_{ij}\,dx+O(\varepsilon^{n+2s})\\
&=\varepsilon\int_{\Omega_\varepsilon}\nabla V(\xi_i)\cdot(x-q_i)W_q Z_{ij}\,dx+\int_{\Omega_\varepsilon}\sum_{\ell\neq i}^k(\lambda_i-\lambda_\ell)w_\ell Z_{ij}\,dx+O(\varepsilon^{n+2s}).
\end{align*}
Consider $\Omega_i$, defined in \eqref{Omegai}. Since
\begin{equation}\label{distqell}
|x-q_\ell|\geq \frac{|q_i-q_\ell|}{2}\geq \frac{\eta}{2} \quad \mbox{ for every }x\in \Omega_i,\;\; i\neq \ell,
\end{equation}
applying Lemma \ref{Zij}, \eqref{behavwi} and Remark \ref{unifV}, then 
\begin{equation*}
\bigg|\int_{\Omega_\ell}(\lambda_i-\lambda_\ell)w_\ell Z_{ij}\,dx\bigg|\leq C\varepsilon \eta^{-\nu_1+1},\qquad \bigg|\int_{\Omega_i}(\lambda_i-\lambda_\ell)w_\ell Z_{ij}\,dx\bigg|\leq C\varepsilon \eta^{-n-2s+1},
\end{equation*}
for a positive constant $C$ independent of $q$. Therefore, 
\begin{align}\label{INT1}
\sum_{\ell=1}^k\int_{\Omega_\varepsilon}&(V(\varepsilon x)-\lambda_\ell)\bar{u}_\ell Z_{ij}\,dx=\varepsilon\frac{\partial V(\xi_i)}{\partial x_j}\int_{\Omega_\varepsilon}(x_j-q_{ij})W_q Z_{ij}\,dx+O(\varepsilon^{n+2s}+\varepsilon \eta^{-\nu_1+1}+\varepsilon \eta^{-n-2s+1}).
\end{align}
Notice that,  using \eqref{eq:radialZij},
\begin{equation}\begin{split}\label{compwiZij}
\int_{\Omega_\varepsilon}(x_j-q_{ij})W_q Z_{ij}\,dx&=\int_{\Omega_\varepsilon}(x_j-q_{ij})w_i Z_{ij}\,dx+\sum_{\ell\neq i}\int_{\Omega_\varepsilon}(x_j-q_{ij})w_\ell Z_{ij}\,dx\\
&=\int_{\R^n}\frac{y_j^2}{|y|} w(|y|)w'(|y|)\,dy+O(\varepsilon^{\nu_1+2s-1}+\eta^{-n-2s}+\eta^{-\nu_1+1})\\
&=c_0+O(\varepsilon^{\nu_1+2s-1}+\eta^{-n-2s}+\eta^{-\nu_1+1}),
\end{split}\end{equation}
with $c_0<0$ independent of $j$.

Substituting in \eqref{value_cij} we conclude
$$c_{ij}=\varepsilon \gamma_i \frac{\partial V(\xi_i)}{\partial x_j}+O\big(\varepsilon^{\frac 12} \tau+\eta^{-\min\{1,p-1\}(n+2s)}\tau+\varepsilon^{n+2s}+\varepsilon\eta^{-\nu_1+1}+\eta^{\mu-n-2s}\big),$$
with $\gamma_i=-\frac{c_0}{\alpha_i}>0$ independent of $j$.
\end{proof}

\begin{lemma}\label{criticalV} Let $\theta:=\frac{p+1}{p-1}-\frac{n}{2s}$ and $q\in\Xi_\eta$. Denoting $\xi=\varepsilon q$, the following expansion holds:
\begin{align*}
  I_\varepsilon(q) &=c_*\sum\limits_{i=1}^kV^\theta(\xi_i)-\sum\limits_{i=1}^k\sum\limits_{i\neq \ell}\frac{c_{i\ell}^*}{|q_i-q_\ell|^{n+2s}}+O(\bar{\tau}),
\end{align*}
where 
\begin{align*}
c_*&:=\frac{1}{2}\int_{\mathbb{R}^n}(w(-\Delta)^s w+w^2)\,dx-\frac{1}{p+1}\int_{\mathbb{R}^n}w^{p+1}\,dx,\\
c_{i\ell}^*&=\frac{1}{2}c_0 V(\xi_i)^{\frac{p}{p-1}-\frac{n}{2s}}V(\xi_\ell)^{\frac{1}{p-1}-\frac{n+2s}{2s}}\int_{\mathbb{R}^n}w^p\,dx,
\end{align*}
and
\begin{align*}
\bar{\tau}&:=\varepsilon\sum\limits_{i=1}^k |\nabla V(\xi_i)|+\varepsilon^2+\eta^{-2(n+2s-\mu)}+\eta^{-\min\{2,p\}(n+2s)}.\\
\end{align*}
\end{lemma}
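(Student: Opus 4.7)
By Lemma \ref{expansion}, $I_\varepsilon(q) = J_\varepsilon(U_q) + O(\tau^2)$, and by Proposition \ref{propenergy}, $J_\varepsilon(U_q) = J(W_q) + O\big(\varepsilon\sum_i|\nabla V(\xi_i)| + \varepsilon^2\big)$. Since $\tau^2\leq 2\big(\varepsilon^2 + \eta^{2(\mu-n-2s)}\big)$, both contributions are absorbed in $\bar\tau$. It remains to expand $J(W_q)$ (defined in \eqref{JWq}) in terms of the stated quantities.

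\textbf{Single-bump contribution.} Testing the rescaled equation $(-\Delta)^s w_{\lambda_i} + \lambda_i w_{\lambda_i} = w_{\lambda_i}^p$ against $w_{\lambda_i}$ yields $\int((-\Delta)^s w_{\lambda_i})w_{\lambda_i} + \lambda_i\int w_{\lambda_i}^2 = \int w_{\lambda_i}^{p+1}$, so
\[ J_i(w_i) = \Big(\tfrac{1}{2}-\tfrac{1}{p+1}\Big)\int w_{\lambda_i}^{p+1}\,dx.\]
The change of variables $z=\lambda_i^{1/(2s)}(x-q_i)$ gives $\int w_{\lambda_i}^{p+1}\,dx = V(\xi_i)^{\theta}\int w^{p+1}\,dy$, and the same identity applied to $w$ itself identifies the constant, producing $\sum_i J_i(w_i) = c_*\sum_i V(\xi_i)^\theta$ exactly.

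\textbf{Interaction and nonlinear correction.} On each subregion $\Omega_\ell$ from \eqref{Omegai}, where $w_\ell$ dominates, a Taylor expansion of $(w_\ell+t)^{p+1}$ with $t=\sum_{i\neq\ell}w_i$ gives
\[\Big(\sum_i w_i\Big)^{p+1}-\sum_i w_i^{p+1} = (p+1)w_\ell^p\sum_{i\neq\ell}w_i + R_\ell,\]
where $|R_\ell|\lesssim w_\ell^{p-2}\big(\sum_{i\neq\ell}w_i\big)^2$ when $p\geq 2$ and $|R_\ell|\lesssim\big(\sum_{i\neq\ell}w_i\big)^{p+1}$ when $1<p<2$ (by Lemma \ref{ineq}). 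Using \eqref{behavwi} and the definition of $\eta$, $\int_{\Omega_\ell}|R_\ell|\,dx = O(\eta^{-\min\{2,p\}(n+2s)})$. Summing over $\ell$ and substituting into \eqref{JWq} gives
\[J(W_q)-\sum_i J_i(w_i) = -\tfrac{1}{2}\sum_{i\neq\ell}\int_{\R^n}w_i^p w_\ell\,dx + O(\eta^{-\min\{2,p\}(n+2s)}).\]
For the crucial integral, since $w_i^p$ carries unit-scale mass around $q_i$ while $w_\ell$ is smooth there, Taylor-expanding $w_\ell$ about $q_i$ and using Lemma \ref{Zij} to bound $|\nabla w_\ell(x)|$ on the support of $w_i^p$ yields
\[\int_{\R^n}w_i^p(x) w_\ell(x)\,dx = w_\ell(q_i)\int_{\R^n} w_i^p\,dx + O(\eta^{-(n+2s+1)}).\]
The explicit decay $w(r)=c_w r^{-(n+2s)}+O(r^{-(n+2s+1)})$ as $r\to\infty$ (refining \eqref{behavGamma}--\eqref{behavwi}) combined with $w_\ell(q_i) = \lambda_\ell^{1/(p-1)}w(\lambda_\ell^{1/(2s)}(q_i-q_\ell))$ and $\int w_i^p = V(\xi_i)^{p/(p-1)-n/(2s)}\int w^p$ gives
\[\tfrac{1}{2}\int w_i^p w_\ell\,dx = \frac{c_{i\ell}^*}{|q_i-q_\ell|^{n+2s}} + O(\eta^{-(n+2s+1)}),\]
with $c_{i\ell}^*$ matching the stated formula once the scalar $c_0$ is identified with the constant appearing in Lemma \ref{criticalV1bis}. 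Since $\mu<n+2s$, one has $\eta^{-(n+2s+1)}\leq\eta^{-2(n+2s-\mu)}$ for $\eta$ large, and all remaining error terms fit within $\bar\tau$.

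\textbf{Main obstacle.} The delicate step is the asymptotic computation of $\int w_i^p w_\ell$: because of the polynomial (rather than exponential) decay of $w$, the interaction is comparable in size to the potential correction, so it must be extracted to leading order $|q_i-q_\ell|^{-(n+2s)}$ and not merely bounded. Careful tracking of the scaling constants in $w_{\lambda_i}$ and of the higher-order Taylor remainders is what sets the precise form of each term in $\bar\tau$.
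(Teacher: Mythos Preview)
Your proof follows essentially the same route as the paper's: reduce via Lemma~\ref{expansion} and Proposition~\ref{propenergy}, compute $J_i(w_i)$ by scaling, expand $(\sum w_i)^{p+1}-\sum w_i^{p+1}$ on the dominance regions $\Omega_\ell$, and extract the leading interaction term from $\int w_i^p w_\ell$ using the precise asymptotic of $w$. The paper states the interaction as $\frac{c_{i\ell}^*}{|q_i-q_\ell|^{n+2s}}(1+o_\varepsilon(1))$ rather than quantifying the remainder; your Taylor-expansion variant is a legitimate way to arrive at the same thing.

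One small slip: the sentence ``Since $\mu<n+2s$, one has $\eta^{-(n+2s+1)}\leq\eta^{-2(n+2s-\mu)}$'' is not justified by the hypothesis $\mu<n+2s$ alone. The inequality $n+2s+1\ge 2(n+2s-\mu)$ is equivalent to $\mu\ge\tfrac{n+2s-1}{2}$, which can fail in the admissible range $\mu\in(\tfrac{n}{2},\tfrac{n+2s}{2})$ when $s>\tfrac12$. This is not a structural problem---the paper itself absorbs the analogous error as an unquantified $o_\varepsilon(1)$ multiplying the interaction term---but your stated reason is incorrect. Either invoke the sharper condition on $\mu$, or, matching the paper, simply record the interaction remainder as $o(\eta^{-(n+2s)})$ and note that this is harmless for the applications in Section~6.
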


\begin{proof}
Applying Lemma \ref{expansion} and Proposition \ref{propenergy}, we have that
\[I_\varepsilon(q)= J(W_q)
+O(\bar{\tau}),\qquad \bar{\tau}:=\varepsilon\sum\limits_{i=1}^k |\nabla V(\xi_i)|+(\varepsilon+\eta^{\mu-n-2s})^2,\]
where (recall \eqref{JWq}),
\begin{align*}
               J(W_q)=\sum\limits_{i=1}^kJ_i(w_i)+ \frac{1}{2}\sum\limits_{i\neq \ell}\int_{\mathbb{R}^n} w_i^pw_\ell\,dx -\frac{1}{p+1}\int_{\mathbb{R}^n}\Bigl[\Big(\sum\limits_{i=1}^kw_i\Big)^{p+1}-\sum\limits_{i=1}^kw_i^{p+1}\Big]\,dx.
             \end{align*}
Furthermore, it can be easily seen (see definition \eqref{Ji}) that 
\begin{align}\label{eq_Jiwi}
  J_i(w_i)   &=\lambda_i^{\frac{p+1}{p-1}-\frac{n}{2s}}\left[\frac{1}{2}\int_{\mathbb{R}^n}(w(-\Delta)^s w+ w^2)\,dx -\frac{1}{p+1}\int_{\mathbb{R}^n}w^{p+1}\,dx\right]=V^{\theta}(\xi_i)c_*.
\end{align}
Using \eqref{behavw} and \eqref{behavwi}, after a change of variable we can prove that 
\begin{equation}\label{eq:wiwl}
\int_{\mathbb{R}^n}w_i^pw_\ell\,dx=\lambda_i^{\frac{p}{p-1}-\frac{n}{2s}}\lambda_\ell^{\frac{1}{p-1}-\frac{n+2s}{2s}}
\int_{\mathbb{R}^n}w^p\frac{c_0}{|q_i-q_\ell|^{n+2s}}(1+o_\varepsilon(1)),
\end{equation}
provided that $\eta\to\infty$ as $\varepsilon \to 0$. Here $o_\varepsilon(1)$ stands for a quantity that converges to 0 whenever $\varepsilon \to 0$.
Splitting the integral into the subdomains $\Omega_i$ (see \eqref{Omegai}) and using \eqref{eq:wiwl},
\begin{align*}
 \int_{\mathbb{R}^n}&\Bigl[\Big(\sum\limits_{i=1}^kw_i\Big)^{p+1}-\sum\limits_{i=1}^kw_i^{p+1}\Bigl]\,dx =\int_{\Omega_\varepsilon}\Bigl[\Big(\sum\limits_{i=1}^kw_i\Big)^{p+1}-\sum\limits_{i=1}^kw_i^{p+1}\Bigl]\,dx+O\big(\varepsilon^{pn+(p+1)2s}\big)\\
  & =\sum\limits_{i=1}^k\int_{\Omega_i}\Bigl((p+1)w_i^p\sum\limits_{ \ell\neq i}w_\ell+O\Big(w_i^{\min\{p-1,1\}}\Big(\sum\limits_{i\neq \ell}w_\ell\Big)^2\Big)\Bigl)\,dx+O\big(\varepsilon^{pn+(p+1)2s}\big)\\
  &=\sum\limits_{i=1}^k\sum\limits_{\ell\neq i}(p+1)\int_{\R^n}w_i^pw_\ell\,dx+O\big(\varepsilon^{pn+(p+1)2s}+\eta^{-\min\{2,p\}(n+2s)}\big)\\
  &=\sum\limits_{i=1}^k\sum\limits_{\ell\neq i}(p+1)\int_{\mathbb{R}^n}w^p\frac{c_{i\ell}}{|q_i-q_\ell|^{n+2s}}(1+o_\varepsilon(1))\,dx+O\big(\varepsilon^{pn+(p+1)2s}+\eta^{-\min\{2,p\}(n+2s)}\big), 
\end{align*}
with
\[c_{i\ell}:=c_0 V(\xi_i)^{\frac{p}{p-1}-\frac{n}{2s}}V(\xi_\ell)^{\frac{1}{p-1}-\frac{n+2s}{2s}}.\]
Hence
\[J(W_q)=c_*\sum\limits_{i=1}^kV^\theta(\xi_i)-\sum\limits_{i=1}^k\sum\limits_{\ell\neq i}\frac{c_{i\ell}^*}{|q_i-q_\ell|^{n+2s}}(1+o_\varepsilon(1))+O\big(\varepsilon^{pn+(p+1)2s}+\eta^{-\min\{2,p\}(n+2s)}\big),\]
and
\[I_\varepsilon(q)=c_*\sum\limits_{i=1}^kV^\theta(\xi_i)-\sum\limits_{i=1}^k\sum\limits_{\ell\neq i}\frac{c_{i\ell}^*}{|q_i-q_\ell|^{n+2s}}(1+o_\varepsilon(1))+O(\bar{\tau}).\]
\end{proof}

\section{Proofs of the Theorems }

In this section, we present the proofs of Theorems \ref{result1}-\ref{result4}. We begin with the existence results.
\medskip

\textbf{Proof of Theorem \ref{result1}.} Fix $\eta=c_\star \varepsilon^{-1}$, $c_\star>0$, in the definition of the configuration space $\Xi_\eta$ (see \eqref{Xi}). Then, given $q\in\Xi_\eta$, the function
$u_q=U_q+\Phi(q),$
with $\Phi(q)$ provided by Theorem \ref{Phi}, solves \eqref{projectedp}. For this choice of $\eta$, from Lemma \ref{criticalV1bis} we get
$$c_{ij}=\varepsilon\bigg[\gamma_i\frac{\partial V(\xi_i)}{\partial x_j}+o_\varepsilon(1)\bigg],\qquad \xi_i:=\varepsilon q_i,\quad i\in\{1,\ldots, k\},$$
where $o_\varepsilon(1)$ stands for a quantity that converges to 0 as $\varepsilon\to 0$, uniformly in $q\in\Xi_\eta$.

By virtue of the hypotheses on $V$, there exists $\hat{\xi}=(\hat{\xi}_1,\ldots,\hat{\xi}_k)\in \varepsilon \Xi_\eta$ such that 
$$\nabla V(\hat{\xi}_i)=0\quad \mbox{ and }\quad  \mbox{det}(D^2V(\hat{\xi}_i))\neq 0,$$
for every $i=\{1,\ldots,k\}$. Hence, by the implicit function theorem, there exists a point $\xi^\varepsilon_i\in \varepsilon\Xi_\eta$ such that $|\xi_i^\varepsilon-\hat{\xi_i}|\to 0$ when $\varepsilon\to 0$ and
$$\gamma_i\frac{\partial V(\xi_i^\varepsilon)}{\partial x_j}+o_\varepsilon(1)=0,$$
for every $i\in\{1,\ldots,k\}$, $j\in\{1,\ldots, n\}$. Thus, 
$$c_{ij}^\varepsilon:=\varepsilon\bigg[\gamma_i\frac{\partial V(\xi_i^\varepsilon)}{\partial x_j}+o_\varepsilon(1)\bigg]=0,$$
and, denoting $q^\varepsilon:=\frac{\xi^\varepsilon}{\varepsilon}$,
$$u_{q^\varepsilon}:=U_{q^\varepsilon}+\Phi(q^\varepsilon),$$
is a solution of \eqref{dirichlet1}, with $\|\Phi(q^\varepsilon)\|_*\to 0$ as $\varepsilon\to 0$. 
$\hfill\square$\\

\textbf{Proof of Theorem \ref{result3}.} Following the ideas in \cite[Proposition 4.2]{KW2000}, we consider the following configuration space
\[A=\{\xi=(\xi_1,\cdots,\xi_k):\xi_i\in K,\;\;\min\limits_{i\neq \ell}|\xi_i-\xi_\ell|>\varepsilon^{1-\frac{\alpha}{n+2s}}\},\qquad \alpha\in (0,1),\]
where $K$ is given in the theorem. Let us fix 
\begin{equation}\label{choiceEta}
\eta=\tfrac{1}{2}\varepsilon^{-\frac{\alpha}{n+2s}}\quad \mbox{ in such a way that }\quad A\subset \Xi_\eta,
\end{equation}
and consider the functional $I_\varepsilon(q)$ given in \eqref{defIq}, for $q:=\frac{\xi}{\varepsilon}$, $\xi\in A$.
Since it is continuous, $I_\varepsilon$ admits a maximizer $q^\varepsilon=\frac{\xi^\varepsilon}{\varepsilon}$, with $\xi^\varepsilon\in\bar{A}.$ Let us see that actually $\xi^\varepsilon\in A$.

Choose a point $\hat{\xi}\in K$ such that $V(\hat{\xi})=\max\limits_{K}V$ (this point exists by the conditions on $K$ and $V$) and define
 \[\xi_i^0:=\hat{\xi}+\varepsilon^{1-\frac{\beta}{n+2s}}X_i,\quad i=1,\cdots,k,\]
where every $X_i$ is a vertex of a $k$-polygon centered at $0$ with $|X_i-X_\ell| = 1$ for $i\neq \ell$, and 
\begin{equation}\label{rangebeta}
\beta\in (0,1)\quad \mbox{ such that }\quad \alpha<\beta<\alpha\min\bigg\{p,2,2\frac{n+2s-\mu}{n+2s}\bigg\}.
\end{equation}
Notice that this range is admissible due to the definiton of $\mu$ (see \eqref{mu}) and the fact that $p>1$, which make
$$\min\bigg\{p,2,2\frac{n+2s-\mu}{n+2s}\bigg\}>1.$$
Since $K$ is open, taking $\varepsilon$ small enough we can assume $\xi_i^0\in K.$ Furthermore, $\xi^0=(\xi_1^0,\cdots,\xi_k^0)\in A$ since $\alpha<\beta$. Denoting $q^0:=\frac{\xi^0}{\varepsilon}$, by Lemma \ref{criticalV} we have
 \begin{equation}\begin{split}\label{lowbdd}
  I_\varepsilon(q^\varepsilon) &=\max\limits_{\bar{A}} I_\varepsilon(q)\geq I_\varepsilon(q^0)\geq c_*k \sup\limits_{x\in K}V^\theta(x)-c_1\varepsilon^{\beta}+O\big(\varepsilon +\eta^{-\min\{p,2\}(n+2s)}+\eta^{-2(n+2s-\mu)}\big)\\
  &\geq c_*k \sup\limits_{x\in K}V^\theta(x)-c_2\varepsilon^{\beta},
\end{split}\end{equation}
due to \eqref{choiceEta} and \eqref{rangebeta}, where $c_1,c_2$ are positive constants.

Suppose $\xi^\varepsilon\in \partial A.$ Then either there is an index $i$ such that $\xi_i^\varepsilon\in \partial K,$ or there exist indices $i\neq \ell$ such that
\[|\xi_i^\varepsilon-\xi_\ell^\varepsilon|=\min\limits_{i\neq \ell}|\xi_i-\xi_\ell|=\varepsilon^{1-\frac{\alpha}{(n+2s)}}.\]
In the first case, from Lemma \ref{criticalV} and \eqref{rangebeta}, we see that
\begin{align*}
  I_\varepsilon(q^\varepsilon)&\leq c_* V^\theta(\xi_i^\varepsilon)+c_*\sum\limits_{\ell\neq i}V^\theta(\xi_\ell^\varepsilon)+C\varepsilon^\beta\leq c_* k \max\limits_{x\in K}V^\theta(x)+c_*\left(\max\limits_{x\in \partial K}V^\theta(x)-\max\limits_{x\in K}V^\theta(x)\right)+C\varepsilon^\beta,
\end{align*}
which contradicts \eqref{lowbdd} since by hypothesis
$$\max\limits_{x\in \partial K}V^\theta(x)-\max\limits_{x\in K}V^\theta(x)\leq -\rho_0<0,$$
with $\rho_0$ independent of $\varepsilon$. In the second case, applying Lemma \ref{criticalV} again,
\[I_\varepsilon(q^\varepsilon)\leq  c_* k \max\limits_{K}V^\theta(x)-c_3\varepsilon^{\alpha}+O(\varepsilon^\beta),\]
for some positive $c_3,$ which is also a contradiction with \eqref{lowbdd} for $\varepsilon$ sufficiently small since $\alpha<\beta$.
Therefore, necesarilly $\xi^\varepsilon\in A$ and hence
\[\nabla I_\varepsilon(q^\varepsilon)=0\]
since the domain $A$ is open. Therefore, Lemma \ref{equivalence} applies and we conclude that
$$u_{q^\varepsilon}:=U_{q^\varepsilon}+\Phi(q^\varepsilon),$$
is a solution of \eqref{dirichlet1}, with $\xi^\varepsilon_i\in K$ for every $i\in\{1,\ldots, k\}$, and $\|\Phi(q^\varepsilon)\|_*\to 0$ as $\varepsilon\to 0$. This proves the first part of the Theorem \ref{result3}.

Finally, if $\hat{\xi}$ is a strict local maximum of $V$, we can take $\varepsilon$ small enough such that
$$V(\hat{\xi})>V(\xi)\qquad \mbox{ for every }\xi\in B_{\rho_\varepsilon}(\hat{\xi})\setminus \{\hat{\xi}\},\qquad \rho_\varepsilon:=2\varepsilon^{1-\frac{\beta}{n+2s}}.$$
Repeating the previous argument with $K=B_{\rho_\varepsilon}(\hat{\xi})$ we find a $k$-spike solution whose peaks $\xi_i^\varepsilon$ satisfy
$$V(\xi_i^\varepsilon)\to V(\hat{\xi})\qquad \mbox{ as }\varepsilon \to 0.$$ $\hfill\square$\\

In order to prove the non existence result, we need the following improved expansion for potential solutions:

\begin{lemma} \label{nonexistence2}
Suppose that $\hat{\xi}$ is a local minimum point of $V$ such that $det(D^2 V(\hat{\xi}))\neq 0$, and assume $u_\varepsilon$ is a solution to  \eqref{dirichlet} of the form \eqref{eq:u_eps} with $\xi_i^\varepsilon\rightarrow\hat{\xi}$ as $\varepsilon\to 0$.
Then
\[-\varepsilon\frac{\partial V(\xi_i^\varepsilon)}{\partial x_j}+c\sum\limits_{\ell\neq i}\frac{1}{|q_\ell^\varepsilon-q_i^\varepsilon|^{n+2s}}\left(\frac{q_\ell^\varepsilon-q_i^\varepsilon}{|q_\ell^\varepsilon-q_i^\varepsilon|}\right)_j+O(\varepsilon^{2})+o(\eta^{-(n+2s)})=0\]
for $i =1,...,k, j =1,...,n,$ where $c>0$ is a positive number.
\end{lemma}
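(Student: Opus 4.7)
The strategy is to exploit the variational reduction developed in Sections~\ref{sec:nonlinear}--\ref{energy}. Writing $q_i^\varepsilon:=\xi_i^\varepsilon/\varepsilon$, the hypothesis that $u_\varepsilon$ has the multi-peak form \eqref{eq:u_eps} allows me to identify $u_\varepsilon=U_{q^\varepsilon}+\Phi(q^\varepsilon)$ with $\Phi(q^\varepsilon)\in X$ the unique perturbation produced by Proposition~\ref{Phi}. Since $u_\varepsilon$ solves \eqref{dirichlet1} genuinely (and not merely the projected problem \eqref{projectedp}), every Lagrange multiplier $c_{ij}$ in \eqref{uq} must vanish. The lemma is therefore a sharpening of the identity $c_{ij}=0$ already handled in Lemma~\ref{criticalV1bis}: the plan is to refine the error terms in that expansion so as to separate the leading interaction piece from the genuinely smaller remainders.

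The main step is a careful re-examination of $\int_{\Omega_\varepsilon}E(\Phi)Z_{ij}\,dx$. Following the decomposition \eqref{INT1} in the proof of Lemma~\ref{criticalV1bis}, the potential piece $\int(V(\varepsilon x)-\lambda_i)W_qZ_{ij}\,dx$, combined with \eqref{compwiZij}, delivers the first term $-\varepsilon c_0\partial_{x_j}V(\xi_i^\varepsilon)+O(\varepsilon^2)$. The interaction terms come from $\sum_{\ell\neq i}(\lambda_i-\lambda_\ell)\int\bar u_\ell Z_{ij}\,dx+\int\!\bigl[(\sum_m w_m)^p-\sum_m w_m^p\bigr]Z_{ij}\,dx$. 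Integrating \eqref{eqZij} by parts against $w_\ell$ and using the equation for $w_\ell$ itself produces the Pohozaev-type identity
\[
\int w_\ell^p Z_{ij}\,dx=p\int w_\ell w_i^{p-1}Z_{ij}\,dx+(\lambda_\ell-\lambda_i)\int w_\ell Z_{ij}\,dx,
\]
which, substituted into the Taylor expansion $(\sum_m w_m)^p=w_i^p+pw_i^{p-1}\sigma+O(w_i^{p-2}\sigma^2)$ with $\sigma:=\sum_{m\neq i}w_m$, collapses the interaction contribution to $\sum_{\ell\neq i}(\lambda_i-\lambda_\ell)\int w_\ell Z_{ij}\,dx$ modulo an $O(\eta^{-2(n+2s)})$ quadratic residual (the leading quadratic piece vanishes after integration by symmetry of $Z_{ij}$). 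The nonlinear contribution $\int N(\Phi)Z_{ij}\,dx$ and the correction $\tilde f_{ij}$ of Lemma~\ref{cij} are absorbed in the $o(\eta^{-(n+2s)})$ remainder via Lemma~\ref{estimateN} and Proposition~\ref{Phi}, while the boundary piece $(U_q+\Phi)^p-(W_q+\Phi)^p$ contributes $O(\varepsilon^{n+2s})$ by Lemma~\ref{limit}.

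To close, I extract the asymptotics of $\int w_\ell Z_{ij}\,dx$ by Taylor-expanding $w_\ell$ around $q_i$ and using $\int y_j\partial_{y_j}w_{\lambda_i}(y)\,dy=-\int w_{\lambda_i}\,dy$, which yields
\[
\int w_\ell Z_{ij}\,dx=-(n+2s)\Bigl(\int w_{\lambda_i}\,dy\Bigr)\frac{(q_\ell^\varepsilon-q_i^\varepsilon)_j}{|q_\ell^\varepsilon-q_i^\varepsilon|^{n+2s+2}}+o(\eta^{-(n+2s+1)}).
\]
Coupling this with $\lambda_i-\lambda_\ell=V(\xi_i^\varepsilon)-V(\xi_\ell^\varepsilon)$ and repackaging the constants produces an interaction term of the form $c\sum_{\ell\neq i}|q_\ell^\varepsilon-q_i^\varepsilon|^{-(n+2s)}\bigl(\tfrac{q_\ell^\varepsilon-q_i^\varepsilon}{|q_\ell^\varepsilon-q_i^\varepsilon|}\bigr)_j$ with $c>0$. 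Setting $c_{ij}=0$ and dividing by the positive constant $\alpha_i$ from Lemma~\ref{orthogonal} delivers the claimed identity. The main technical obstacle is the cancellation that isolates the interaction at precisely the order $\eta^{-(n+2s)}$: the individual integrals $\int w_\ell^p Z_{ij}$ and $p\int w_\ell w_i^{p-1}Z_{ij}$ are each of larger magnitude than their difference, and it is only the hypotheses $\xi_i^\varepsilon\to\hat\xi$ (which forces $|\lambda_i-\lambda_\ell|\to 0$) and $\eta\to\infty$ that allow every auxiliary remainder to fall strictly below the $\eta^{-(n+2s)}$ threshold.
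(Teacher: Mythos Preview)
Your overall strategy is sound---set $c_{ij}=0$ and sharpen the expansion of $\int_{\Omega_\varepsilon}E(\Phi)Z_{ij}$---and this is exactly what the paper does. But the way you extract the interaction term is flawed and in fact destroys it.

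The Pohozaev-type identity you write is correct over $\mathbb R^n$. The error is in combining it with the Taylor expansion $(\sum_m w_m)^p=w_i^p+pw_i^{p-1}\sigma+O(w_i^{p-2}\sigma^2)$: that expansion is valid only on $\Omega_i$, where $w_i$ dominates. If you apply it globally, you manufacture a spurious cancellation between $p\int_{\mathbb R^n} w_i^{p-1}w_\ell Z_{ij}$ and $\int_{\mathbb R^n} w_\ell^p Z_{ij}$, leaving $(\lambda_i-\lambda_\ell)\int w_\ell Z_{ij}$ as the sole survivor. But $\lambda_i-\lambda_\ell=V(\xi_i^\varepsilon)-V(\xi_\ell^\varepsilon)\to 0$ (both points converge to $\hat\xi$), while by your own computation $\int w_\ell Z_{ij}=O(\eta^{-(n+2s+1)})$; the product is therefore $o(\eta^{-(n+2s)})$ and falls into the remainder. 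You would end up with no interaction term at all, contradicting the statement you are trying to prove.

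What actually happens is that on $\Omega_i$ the piece $-\sum_{\ell\neq i}w_\ell^p$ is of size $\eta^{-p(n+2s)}=o(\eta^{-(n+2s)})$ there, so it does \emph{not} cancel the nonlinear cross term. The leading interaction is precisely $p\int_{\mathbb R^n}w_i^{p-1}w_\ell Z_{ij}$, and one needs a direct asymptotic for this integral (the paper invokes \cite[Lemma~3.1]{ARS2021}), obtaining the $c\,|q_\ell-q_i|^{-(n+2s)}\bigl(\tfrac{q_\ell-q_i}{|q_\ell-q_i|}\bigr)_j$ term with a constant that does \emph{not} involve $\lambda_i-\lambda_\ell$. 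The potential cross terms $\sum_{\ell\neq i}(\lambda_i-\lambda_\ell)\int \bar u_\ell Z_{ij}$ and the analogous pieces involving $V(\varepsilon x)-V(\hat\xi)$ are then shown separately to be $o(\eta^{-(n+2s)})+O(\varepsilon^2)$ using $\nabla V(\hat\xi)=0$ and $\xi_\ell^\varepsilon\to\hat\xi$. Your Pohozaev identity is a true relation, but it is not the mechanism that isolates the interaction here.
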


\begin{proof} 
We refine the arguments in the proof of Lemma \ref{criticalV1bis} taking advantage of the hypothesis of $\xi_i^\varepsilon$. 

Indeed, since $u_\varepsilon=U_{q^\varepsilon}+\Phi(q^{\varepsilon})$ is a solution, $c_{ij}=0$ for every $i\in\{1,\ldots, k\}$, $j\in\{1,\ldots, n\}$, and from \eqref{value_cij} we deduce
\begin{equation*}
\int_{\Omega_\varepsilon}E(\Phi)Z_{ij}\,dx+O(\varepsilon^{\frac{1}{2}}\tau +\eta^{-\min\{1,p-1\}(n+2s)}\tau+\tau^2)=0.
\end{equation*}
Actually, using Proposition \ref{Phi1} and applying the ideas in \cite[Lemma 3.2]{ARS2021} (see the estimate of $A_4$), this identity can be improved to
\begin{equation}\label{ec0}
\int_{\Omega_\varepsilon}E(\Phi)Z_{ij}\,dx+O(\varepsilon^{2}) +o(\eta^{-n-2s})=0.
\end{equation}
Likewise, using \eqref{eq:E1},
\begin{equation}\begin{split}\label{ec1}
\int_{\Omega_\varepsilon}&E(\Phi)Z_{ij}\,dx = \sum_{\ell=1}^k\int_{\Omega_\varepsilon}(V(\varepsilon x)-\lambda_\ell)\bar{u}_\ell Z_{ij}\,dx+\int_{\Omega_\varepsilon}\bigg(\bigg(\sum_{\ell=1}^kw_\ell\bigg)^p-\sum_{\ell=1}^kw_\ell^p\bigg)Z_{ij}\,dx+O(\varepsilon^{n+2s}).
\end{split}\end{equation}
By Lemma \ref{limit},
\begin{equation*}\begin{split}
\sum_{\ell=1}^k&\int_{\Omega_\varepsilon}(V(\varepsilon x)-\lambda_\ell)\bar{u}_\ell Z_{ij}\,dx=\int_{\Omega_\varepsilon}(V(\varepsilon x)-\lambda_i)w_i Z_{ij}\,dx+\sum_{\ell\neq i}^k\int_{\Omega_\varepsilon}(V(\varepsilon x)-\lambda_\ell)w_\ell Z_{ij}\,dx+O(\varepsilon^{n+2s})\\
&=\int_{\Omega_\varepsilon}(V(\varepsilon x)-\lambda_i)w_i Z_{ij}\,dx+\sum_{\ell\neq i}^k\int_{\Omega_\varepsilon}(V(\varepsilon x)-V(\hat{\xi}))w_\ell Z_{ij}\,dx+\sum_{\ell\neq i}^k\int_{\Omega_\varepsilon}(V(\hat{\xi})-\lambda_\ell)w_\ell Z_{ij}\,dx+O(\varepsilon^{n+2s}).
\end{split}\end{equation*}
Let $R_0<\eta$ and denote $\hat{q}=\frac{\hat{\xi}}{\varepsilon}$. Then, doing a Taylor expansion and using the fact that $\hat{\xi}$ is a critical point of $V$,
\begin{align*}
&\bigg|\int_{B_{R_0}(\hat{q})}(V(\varepsilon x)-V(\hat{\xi}))w_\ell Z_{ij}\,dx\bigg|\leq C \varepsilon^2\int_{B_{R_0}(\hat{q})}|x-\hat{q}|^2|w_\ell||Z_{ij}|\,dx\leq C\varepsilon^2.
\end{align*}
Using the boundedness of $V$, Lemma \ref{Zij}, and the dominated convergence theorem (since $|\hat{q}|\to\infty$ as $\varepsilon\to 0$),
\begin{align*}
\bigg|\int_{\R^n\setminus B_{R_0}(\hat{q})}(V(\varepsilon x)-V(\hat{\xi}))w_\ell Z_{ij}\,dx\bigg|&=O\bigg(\eta^{-n-2s}\int_{\R^n\setminus B_{R_0}(\hat{q})}|Z_{ij}|\,dx+\eta^{-\nu_1}\int_{\R^n\setminus B_{R_0}(\hat{q})}|w_\ell|\,dx\bigg)\\
&=o\big(\eta^{-n-2s}+\eta^{-\nu_1}\big).
\end{align*}
Likewise, using the fact that $|V(\xi^\varepsilon_\ell)-V(\hat{\xi})|\to 0$ for every $\ell\in\{1,\ldots,k\}$, it can be seen that 
$$\sum_{\ell\neq i}^k\int_{\Omega_\varepsilon}(V(\hat{\xi})-\lambda_\ell)w_\ell Z_{ij}\,dx=o\big(\eta^{-n-2s}+\eta^{-\nu_1}\big).$$
Thus, reproducing computation \eqref{compwiZij} we conclude
\begin{equation}\label{ec2}
\sum_{\ell=1}^k\int_{\Omega_\varepsilon}(V(\varepsilon x)-\lambda_\ell)\bar{u}_\ell Z_{ij}\,dx=\varepsilon c_0\frac{\partial V(\xi_i)}{\partial x_j}+o(\eta^{-n-2s})+O(\varepsilon^2), 
\end{equation}
with $c_0$ a negative constant.

On the other hand, using Lemma \ref{graZij}, \eqref{distqell}, and \cite[Lemma 3.1]{ARS2021}, 
\begin{equation}\begin{split}\label{ec3}
\int_{\Omega_\varepsilon}\bigg(\bigg(\sum_{\ell=1}^kw_\ell\bigg)^p-\sum_{\ell=1}^kw_\ell^p\bigg)Z_{ij}\,dx&=
\int_{\Omega_i}\bigg(\bigg(\sum_{\ell=1}^kw_\ell\bigg)^p-\sum_{\ell=1}^kw_\ell^p\bigg)Z_{ij}\,dx+o_\varepsilon\big(\eta^{-(n+2s)}\big)\\
&=p\int_{\Omega_i}w_i^{p-1}\sum_{\ell\neq i} w_\ell Z_{ij}\,dx+o_\varepsilon\big(\eta^{-(n+2s)}\big)
\\
&=p\int_{\R^n}w_i^{p-1}\sum_{\ell\neq i} w_\ell Z_{ij}\,dx+o_\varepsilon\big(\eta^{-(n+2s)}\big)+O(\varepsilon^{n+2s})\\
&=\sum\limits_{\ell\neq i}^k \frac{\gamma_\ell}{|q_\ell^\varepsilon-q_i^\varepsilon|^{n+2s}}\left(\frac{q_\ell^\varepsilon-q_i^\varepsilon}{|q_\ell^\varepsilon-q_i^\varepsilon|}\right)_j +o_\varepsilon\big(\eta^{-(n+2s)}\big)+O(\varepsilon^{n+2s}).
\end{split}\end{equation}
Here $o_\varepsilon(\cdot)$ denotes the standard little $o(\cdot)$ notation when we take the limit $\varepsilon\to 0$. Putting together \eqref{ec0}-\eqref{ec3} the result follows. 
\end{proof}

\textbf{Proof of Theorem \ref{result4}.}  
Since by hypothesis
$$\frac{|\xi^\varepsilon_i-\xi^\varepsilon_\ell|}{\varepsilon}\to +\infty\quad \mbox{ as }\varepsilon\to 0,$$
for every $i,\ell\in\{1,\ldots, k\}$, then, given $C\geq 0$,
$$2\max_{\ell\in\{1,\ldots, k\}}|\xi^\varepsilon_\ell-\hat{\xi}|\geq |\xi^\varepsilon_1-\xi^\varepsilon_2|\geq C\varepsilon,$$
provided $\varepsilon$ is small enough.
Then, from Lemma \ref{nonexistence2} we deduce 
\begin{equation}\label{final}-\varepsilon\frac{\partial V(\xi_i^\varepsilon)}{\partial x_j}+c\sum\limits_{\ell\neq i}\frac{1}{|q_\ell^\varepsilon-q_i^\varepsilon|^{n+2s}}\left(\frac{q_\ell^\varepsilon-q_i^\varepsilon}{|q_\ell^\varepsilon-q_i^\varepsilon|}\right)_j+O\bigg(\varepsilon \max_{\ell\in\{1,\ldots, k\}}|\xi^\varepsilon_\ell-\hat{\xi}|\bigg)+o(\eta^{-(n+2s)})=0
\end{equation}
for every $i =1,...,k, j =1,...,n,$ which is exactly the identity in \cite[Lemma 6.2]{KW2000}. 

The proof of the theorem thus follows exactly as in \cite[Theorem 1.2]{KW2000}, by means of \eqref{final} and a contradiction argument.
$\hfill\square$\\

\section*{Acknowledgements}
M.M. and J.W. are supported by Proyecto de Consolidaci\'on Investigadora, CNS2022-135640, MICINN (Spain).  M.M. is also supported by RYC2020-030410-I, MICINN (Spain) and by the grant PID2023-149451NA-I00 of MCIN/AEI/10.13039/ 501100011033/FEDER, UE. 

\bibliography{schrodinger.bib}
\bibliographystyle{abbrv}

\end{document}